\documentclass{article}

\usepackage{latexsym}
\usepackage{amssymb}
\usepackage{amsthm}
\usepackage{amscd}
\usepackage{amsmath}
\usepackage{mathrsfs}
\usepackage{graphicx}
\usepackage{hyperref}
\usepackage{shuffle}

\usepackage[all]{xy}
\input xy \xyoption{frame}

\usepackage[colorinlistoftodos]{todonotes}
\usetikzlibrary{chains,scopes,decorations.markings}

\theoremstyle{definition}
\newtheorem* {theorem*}{Theorem}
\newtheorem* {conjecture*}{Conjecture}
\newtheorem{theorem}{Theorem}[section]

\theoremstyle{definition}

\theoremstyle{definition}

\newtheorem* {example*}{Example}

\newtheorem{lemma}[theorem]{Lemma}
\theoremstyle{definition}
\newtheorem{definition}[theorem]{Definition}
\theoremstyle{definition}

\newtheorem{conjecture}[theorem]{Conjecture}
\newtheorem{proposition}[theorem]{Proposition}
\newtheorem{corollary}[theorem]{Corollary}

\theoremstyle{definition}
\newtheorem {example}[theorem]{Example}
\theoremstyle{definition}

\theoremstyle{definition}

\theoremstyle{definition}
\newtheorem{problem}[theorem]{Problem}
\theoremstyle{definition}

\def\modu{\ (\mathrm{mod}\ }

\def\({\left(}
\def\){\right)}

\newcommand{\QQ}{\mathbb{Q}}

\def\NN{\mathbb{N}}

\def\ZZ{\mathbb{Z}}

\def\spanning{\textnormal{-span}}

\newcommand{\cM}{\mathcal{M}}
\newcommand{\cN}{\mathcal{N}}

\newcommand{\sgn}{\mathrm{sgn}}

\def\fk{\mathfrak}

\def\barr{\begin{array}}
\def\earr{\end{array}}
\def\ba{\begin{aligned}}
\def\ea{\end{aligned}}
\def\be{\begin{equation}}
\def\ee{\end{equation}}

\def\Cyc{\mathrm{Cyc}}

\def\qquand{\qquad\text{and}\qquad}
\def\quand{\quad\text{and}\quad}

\newcommand{\Sym}{\operatorname{Sym}}

\def\inv{\mathrm{Inv}}

\def\cH{\mathcal H}
\def\cM{\mathcal M}
\def\DesR{\mathrm{Des}_R}
\def\DesL{\mathrm{Des}_L}

\def\PP{\mathbb{P}}

\def\fkS{\fk S}

\def\ben{\begin{enumerate}}
\def\een{\end{enumerate}}

\def\cF{\mathcal F}

\def\fpf{{\mathsf {FPF}}}

\def\DesF{\mathrm{Des}_\fpf}

\def\cfpf{\hat c_{\fpf}}

\def\ellhat{\hat\ell}
\def\ellfpf{\ellhat_\fpf}
\def\Ffpf{\hat F^\fpf}

\def\x{\textbf{x}}

\newcommand{\xRightarrow}[2][]{\ext@arrow 0359\Rightarrowfill@{#1}{#2}}

\newcommand{\cA}{\mathcal{A}}

\def\cAfpf{\cA^\fpf}

\def\ellhat{\hat\ell}
\def\Ffpf{\hat F^\fpf}

\def\tS{\tilde S}

\def\tI{\tilde I}

\def\Inv{\operatorname{Inv}}

\def\iF{\hat{F}}

\def\tF{F}

\def\Sym{\textsf{Sym}}

\def\Par{\textsf{Par}}
\def\tI{\tilde I}

\def\arcstart{\ \xy<0cm,-.06cm>\xymatrix@R=.1cm@C=.2cm }
\newcommand{\arcstartc}[1]{\ \xy<0cm,-.15cm>\xymatrix@R=.1cm@C=#1cm}

\def\tp{\Theta^+}
\def\tm{\Theta^-}
\def\tpm{\{\Theta^\pm\}}

\def\cF{\tilde I^{\fpf}}
\def\cFp{\tilde I^{\fpf}_{n,+}}
\def\cFm{\tilde I^{\fpf}_{n,-}}

\def\h {\mathrm{ht}}
\def\Cyc{\mathrm{Cyc}}
\def\wCyc{\widetilde\Cyc}
\def\sfpf{\sgn_\fpf}
\def\m{\mathfrak{m}}
\def\n{\mathfrak{n}}
\usepackage{fullpage}

\numberwithin{equation}{section}
\allowdisplaybreaks[1]

\makeatletter
\renewcommand{\@makefnmark}{\mbox{\textsuperscript{}}}
\makeatother
\title{Quasiparabolic sets and Stanley symmetric functions for affine fixed-point-free involutions}
\author{
Yifeng ZHANG\thanks{
Email: \tt yzhangci@connect.ust.hk
}
\\
Department of Mathematics \\ Hong Kong University of Science and Technology
}
\date{ }
\begin{document}

\maketitle

\setcounter{tocdepth}{2}

\begin{abstract}
We introduce and study affine analogues of the fixed-point-free (FPF) involution Stanley symmetric functions of Hamaker, Marberg, and Pawlowski.
Our methods use the theory of quasiparabolic sets introduced by Rains and Vazirani, and
we prove that the subset of FPF-involutions is a quasiparabolic set for the affine symmetric group under conjugation. Using properties of quasiparabolic sets, we prove a transition formula for the affine FPF involution Stanley symmetric functions, analogous to Lascoux and Sch\"utzenberger's transition formula for Schubert polynomials. Our results suggest several conjectures and open problems.
\end{abstract}

\tableofcontents

\section{Introduction}

In this article, we study a family of symmetric functions $\Ffpf_z$ indexed by the fixed-point-free involutions in
the affine symmetric group. We prove that such affine permutations form a \emph{quasiparabolic set}, 
as predicted by Rains and Vazirani \cite{RV}. 
We prove a transition formula for the symmetric functions $\Ffpf_z$ using this result,
among other applications.

Stanley introduced a family of symmetric power series $F_w$ indexed by permutations, 
now called \emph{Stanley symmetric functions}, in \cite{Stan}. 
These functions are important since they are the stable limits of 
the \emph{Schubert polynomials} $\fkS_w$, 
which represent the cohomology classes of type A Schubert varieties. 
They can be computed using the transition formula for Schubert polynomials given by Lascoux and Sch\"utzenberger in \cite{Lascoux}. This formula implies that the Stanley symmetric functions are Schur positive, in other words, are $\NN$-linear combination of Schur functions $s_\lambda$.
Edelman and Greene first proved this positivity in \cite{EG}.

Since the 1980s, many generalizations of Stanley symmetric functions have been introduced. 
In 2006, Lam \cite{Lam} defined the \emph{affine Stanley symmetric functions}, 
which are indexed by permutations in the affine symmetric group $\tS_n$. 
These symmetric functions also have a geometric meaning: 
they represent cohomology classes for the affine Grassmannian \cite{Lam2008}. 
Affine Stanley symmetric functions are affine Schur positive in the sense that they expand positively in
terms of \emph{affine Schur functions} \cite{Lam2008}. Moreover, they satisfy a transition formula 
\cite{LamShim}, which generalizes the transition formula for ordinary Stanley symmetric functions.
It is expected that the affine transition formula implies affine Schur positivity, but it remains an open problem
to show that the resulting recursion terminates.

Hamaker, Marberg, and Pawlowski have considered another analogue
of Stanley symmetric functions.
In  \cite{HMP1,HMP4}, they studied the \emph{involution Stanley symmetric functions}, which are indexed by self-inverse permutations $z=z^{-1}$ in the finite symmetric group $S_n$. These functions 
are the stable limits of polynomial representatives for 
the cohomology classes of 
the orbit closures of the orthogonal group acting on the type A flag variety. Like the functions above, involution Stanley symmetric functions admit a transition formula \cite{HMP3}, and this can be used to 
show that they are Schur $P$-positive, that is, are $\NN$-linear 
combinations of \emph{Schur $P$-functions} $P_\lambda$ \cite{HMP4}.

This paper is a sequel to  \cite{MZ}, which constructs  
involution Stanley symmetric functions for affine permutations. This prior work defines
and proves a transition formula for what are called \emph{affine involution Stanley symmetric functions}.

There is another family of symmetric functions studied by Hamaker, Marberg, and Pawlowski in \cite{HMP3,HMP5}, called the \emph{fixed-point-free (FPF) involution Stanley symmetric functions}, which
are indexed by self-inverse permutations $z=z^{-1}$ with no fixed points in $S_n$. 
These symmetric functions have a similar interpretation in terms of the cohomology of orbits closures 
in the type A flag variety, but now for the action of a symplectic group. They also admit a transition formula and expand positively into Schur $P$-functions.

In this article, we now consider the affine generalization of FPF-involution Stanley symmetric functions. 
Specifically, we introduce a family of symmetric functions $\Ffpf_z$ indexed by affine fixed-point-free involutions $z\in \tS_n$. We establish several noteworthy properties of these power series,
which we expect are related to the geometry of affine analogues of certain symmetric varieties.
For example, we prove that each $\Ffpf_z$ has a triangular expansion into monomial symmetric functions, and we identify the leading terms in these decompositions. 
We also prove a version of a transition formula for affine fixed-point-free involutions. 

A notable aspect of this work, which did not apply to the cases considered in \cite{MZ},
is its connection to Rains and Vazirani's theory of \emph{quasiparabolic sets} from \cite{RV}. 
The motivating example of a quasiparabolic set is the set of left cosets of a parabolic subgroup $W_J$ in a Coxeter group $W$. Any quasiparabolic set inherits a Bruhat order and other nice properties, generalizing the features of this example.

We confirm a prediction of Rains and Vazirani \cite[Remark 4.4]{RV} that the 
set of affine fixed-point-free involutions naturally forms a quasiparabolic set for $\tS_n$.
This result is an important step in our proof of the affine fixed-point-free transition formula.
In particular, we use several results in \cite{RV} concerning the \emph{quasiparabolic Bruhat order}.
Since affine fixed-point-free involutions
form a quasiparabolic set, they inherit a module structure for the Iwahori-Hecke algebra of $\tS_n$.
Moreover, this module has a bar operator which leads to notions of canonical bases, $W$-graphs, and cells that 
are of independent interest. 

To conclude this introduction, we give an outline of the rest of this article. Section~\ref{pre-sect} introduces the affine symmetric group and Lam's affine Stanley symmetric functions. In Section~\ref{defn-sect}, we give the basic definitions of affine FPF involution Stanley symmetric functions and discuss their properties. In Section~\ref{qp-sect}, we prove that the 
set of affine fixed-point-free involutions forms a quasiparabolic set for $\tS_n$ and
discuss some applications and related open questions.
In Section~\ref{trans-sect}, finally, we prove our affine FPF involution transition formula. 

\subsection*{Acknowledgements}

I thank my PhD advisor, Eric Marberg, for guidance and helpful conversations.

\section{Preliminaries}\label{pre-sect}

Let $n$ be a positive integer. Write $\ZZ$ for the set of integers and define $[n] = \{1,2,\dots,n\}$. We also let $\NN=\{0,1,2,\cdots\}$.
The \emph{affine symmetric group} $\tS_n$ is the group of bijections $\pi:\ZZ \to \ZZ$
satisfying 
\[ \pi(i+n) = \pi(i)+n\text{ for all $i \in \ZZ$}
\qquand
\pi(1) + \pi(2) + \dots +\pi(n) = 1 + 2 + \dots +n.\]
Elements of $\tS_n$ are \emph{affine permutations}.
We will usually assume $n\ge2$ since  $\tS_1 = \{1\}$ is trivial.

Just as finite permutations can be represented as braid diagrams, affine permutations can be drawn as cylinder braid diagrams. Here is an example:
\[
\begin{tikzpicture}[baseline=0,scale=0.18,label/.style={postaction={ decorate,transform shape,decoration={ markings, mark=at position .5 with \node #1;}}}]
{
\draw (-6,0) circle (2.0 and 4.0);
\draw (6,-4) arc (-90:90:2.0 and 4.0);
\draw[densely dotted] (6,4) arc (90:270:2.0 and 4.0);
\draw (-6,4) -- (6,4);
\draw (-6,-4) -- (6,-4);
\fill [gray,opacity=0.30] (-6,4) -- (6,4) arc (90:-90:2.0 and 4.0) -- (-6,-4) arc (-90:90:2.0 and 4.0);
\node at (-8,0) {$_\bullet$};
\node at (-9,0) {$_1$};
\node at (-6,-4) {$_\bullet$};
\node at (-6,-5) {$_2$};
\node at (-4,0) {$_\bullet$};
\node at (-3,0) {$_3$};
\node at (-6,4) {$_\bullet$};
\node at (-6,5) {$_4$};
\node at (4,0) {$_\bullet$};
\node at (3,0) {$_1$};
\node at (6,-4) {$_\bullet$};
\node at (6,-5) {$_2$};
\node at (8,0) {$_\bullet$};
\node at (9,0) {$_3$};
\node at (6,4) {$_\bullet$};
\node at (6,5) {$_4$};
\draw [thick,red] (-4,0) arc (180:270:2 and 4);
\draw[thick,dashed, red] (6,4) arc (90:180:8 and 8);
\draw[thick,dashed, blue] (-6,4) arc (180:270:12 and 8);
\draw [thick,green] (-6,-4) arc (-90:0:14 and 4);
\draw[thick,dashed, purple] (-8,0) -- (4,0);
}
\end{tikzpicture}
\cdot
\begin{tikzpicture}[baseline=0,scale=0.18,label/.style={postaction={ decorate,transform shape,decoration={ markings, mark=at position .5 with \node #1;}}}]
{
\draw (-6,0) circle (2.0 and 4.0);
\draw (6,-4) arc (-90:90:2.0 and 4.0);
\draw[densely dotted] (6,4) arc (90:270:2.0 and 4.0);
\draw (-6,4) -- (6,4);
\draw (-6,-4) -- (6,-4);
\fill [gray,opacity=0.30] (-6,4) -- (6,4) arc (90:-90:2.0 and 4.0) -- (-6,-4) arc (-90:90:2.0 and 4.0);
\node at (-8,0) {$_\bullet$};
\node at (-9,0) {$_1$};
\node at (-6,-4) {$_\bullet$};
\node at (-6,-5) {$_2$};
\node at (-4,0) {$_\bullet$};
\node at (-3,0) {$_3$};
\node at (-6,4) {$_\bullet$};
\node at (-6,5) {$_4$};
\node at (4,0) {$_\bullet$};
\node at (3,0) {$_1$};
\node at (6,-4) {$_\bullet$};
\node at (6,-5) {$_2$};
\node at (8,0) {$_\bullet$};
\node at (9,0) {$_3$};
\node at (6,4) {$_\bullet$};
\node at (6,5) {$_4$};
\draw[thick,dashed,purple] (-8,0) arc (180:270:8 and 4);
\draw[thick,purple] (0,-4) arc (-90:0:8 and 4);
\draw[thick,blue] (-6,-4) arc (-90:0:12 and 8);
\draw[thick,green] (-4,0) arc (-180:-90:10 and 4);
\draw[thick,red,dashed] (4,0) arc (0:-90:2 and 4);
\draw[thick,red] (-6,4) arc (90:0:8 and 8);
}
\end{tikzpicture}
=
\begin{tikzpicture}[baseline=0,scale=0.18,label/.style={postaction={ decorate,transform shape,decoration={ markings, mark=at position .5 with \node #1;}}}]
{
\draw (-6,0) circle (2.0 and 4.0);
\draw (6,-4) arc (-90:90:2.0 and 4.0);
\draw[densely dotted] (6,4) arc (90:270:2.0 and 4.0);
\draw (-6,4) -- (6,4);
\draw (-6,-4) -- (6,-4);
\fill [gray,opacity=0.30] (-6,4) -- (6,4) arc (90:-90:2.0 and 4.0) -- (-6,-4) arc (-90:90:2.0 and 4.0);
\node at (-8,0) {$_\bullet$};
\node at (-9,0) {$_1$};
\node at (-6,-4) {$_\bullet$};
\node at (-6,-5) {$_2$};
\node at (-4,0) {$_\bullet$};
\node at (-3,0) {$_3$};
\node at (-6,4) {$_\bullet$};
\node at (-6,5) {$_4$};
\node at (4,0) {$_\bullet$};
\node at (3,0) {$_1$};
\node at (6,-4) {$_\bullet$};
\node at (6,-5) {$_2$};
\node at (8,0) {$_\bullet$};
\node at (9,0) {$_3$};
\node at (6,4) {$_\bullet$};
\node at (6,5) {$_4$};
\draw[thick,dashed,purple] (-8,0) arc (180:270:8 and 4);
\draw[thick,purple] (0,-4) arc (-90:0:8 and 4);
\draw[thick,green] (-6,-4) -- (6,-4);
\draw[thick,blue,dashed](-6,4) arc (180:270:6 and 8);
\draw[thick,blue](6,4) arc (0:-90:6 and 8);
\draw[thick,red](-4,0) arc (180:270:1.33333333333333333 and 4);
\draw[thick,red,dashed] (-2.666666666666667,-4) arc(-90:0:2.666666666666667 and 8);
\draw[thick,red] (0,4) arc(180:270:2.666666666666667 and 8);
\draw[thick,red,dashed](4,0) arc (0:-90:1.33333333333333333 and 4);
}
\end{tikzpicture}
\]
This represents the product $\pi \cdot \sigma$ of the permutations  $\pi,\sigma\in \tS_4$
that have $(\pi(1), \pi(2), \pi(3), \pi(4)) = (1,0,2,7)$ and 
$(\sigma(1), \sigma(2), \sigma(3), \sigma(4))=(4,3,1,2)$.
To read the diagram of $\pi$, start at $i$ on the right and trace the wire to its value on the left. Each time one crosses the top line going clockwise from the right, add $n$ to the value. Similarly, each time one crosses the line going counterclockwise viewing from the right, subtract $n$ from the value. The result gives $\pi(i)$.

Let $s_i $ for $i \in \ZZ$  be the unique element of $\tS_n$ that interchanges $i$ and $i+1$ while fixing every integer $j \notin \{i,i+1\} + n\ZZ$.
One has $s_i = s_{i+n}$ for all $i \in \ZZ$, and 
 $\{s_1,s_2,\dots,s_n\}$ generates the group $\tS_n$.
With respect to this generating set, $\tS_n$ is the affine Coxeter group of type $\tilde A_{n-1}$.
Let $S_n=\langle s_1,s_2,\dots,s_{n-1}\rangle$ denote the subgroup of permutations in $\tS_n$ that preserve the set $[n]$.

A \emph{reduced expression} for $\pi \in \tS_n$ is a minimal-length factorization $\pi = s_{i_1}s_{i_2}\cdots s_{i_l}$.
The \emph{length} of $\pi \in \tS_n$, denoted $\ell(\pi)$, is the number of factors  in any of its reduced expressions.
The value of $\ell(\pi)$ is also the number of equivalence classes in the set
$
\Inv(\pi) = \{ (i,j) \in \ZZ \times \ZZ : i< j\text{ and }\pi(i)> \pi(j)\}
$
under the relation $\sim$
on $\ZZ\times \ZZ$ with $(a,b) \sim (a',b')$ if and only if $a-a' = b-b' \in n \ZZ$.

To define the affine analogue of Stanley symmetric functions, we need to discuss \emph{cyclically decreasing expressions}.
A reduced expression $\pi = s_{i_1}s_{i_2}\cdots s_{i_l}$ for an affine permutation
is \emph{cyclically decreasing} if $s_{i_j + 1} \neq s_{i_k}$ for all $1 \leq j < k \leq l$. 
An element $\pi \in \tS_n$ is \emph{cyclically decreasing} if it has a cyclically decreasing reduced expression.

\begin{definition}[Lam \cite{Lam}] \label{lam-def}
The \emph{(affine) Stanley symmetric function} of $\pi \in \tS_n$ is 
\[ \tF_\pi = \sum_{\pi = \pi^1 \pi^2 \cdots } x_1^{\ell(\pi^1)} x_2^{\ell(\pi^2)} \cdots \in \ZZ[[x_1,x_2,\dots]]\]
where the sum is over all factorizations $\pi = \pi^1 \pi^2 \cdots$ 
of $\pi$
into countably many (possibly empty) cyclically decreasing factors
$\pi^i \in \tS_n$ such that $\ell(\pi) = \ell(\pi^1) + \ell(\pi^2) + \dots$.
\end{definition}

This definition is an extension of the symmetric functions introduced by Stanley in \cite{Stan}. If $\pi \in S_n \subsetneq \tS_n$ then $\tF_\pi$ 
 coincides with what is denoted $F_{\pi^{-1}}$ in \cite{Stan}. Our inverted indexing convention follows Lam \cite{Lam}.

\begin{example}
Suppose $n=2$ so that $s_1=s_3$. Then the only cyclically decreasing elements of $\tS_2$ are $1$, $s_1$, and $s_2$, so if $\pi \in \tS_2$ has length $\ell(\pi) = k$
then $\tF_\pi = \sum_{i_1<i_2< \dots<i_k} x_{i_1}x_{i_2}\cdots x_{i_k}= m_{1^k}$, where $m_\lambda$ denotes
the usual monomial symmetric function of a partition $\lambda$, i.e., 
the power series symmetrizing $x_1^{\lambda_1}\cdots x_n^{\lambda_n}$.
\end{example}
 
\begin{example}\label{stan-ex}
Suppose $n=4$ so that $s_1=s_5$.
There are four reduced expressions for the affine permutation $\pi = s_4s_2s_3s_1= s_2s_4s_3s_1=s_4s_2s_1s_3= s_2s_4s_1s_3 \in \tS_4$.
There are 11 cases for the distinct length-additive factorizations of this element.
Examples of cyclically decreasing factorizations for $\pi$ include  
\[ (s_4)(s_2)(s_3)(s_1) \qquad\text{and}\qquad (s_4s_2)(s_1)(s_3)=(s_2s_4)(s_1)(s_3).\] 
One can check that $\tF_{\pi} = 4m_{1111} + 2m_{211}+m_{22}$.
 \end{example}

%

Next, we discuss some useful combinatorial properties of affine permutations.

Fix an affine permutation $\pi \in \tS_n$.
The \emph{code} of $\pi$  is the sequence $c(\pi) = (c_1,c_2,\dots,c_n)$ where $c_i $ is the number of integers $j\in \ZZ$ with $ i <j$ and $\pi(i)>\pi(j)$.
An integer $i \in \ZZ$ is a \emph{descent} of $\pi$ if $\pi(i) > \pi(i+1)$, i.e., if $\ell(\pi s_i) = \ell(\pi) -1$.
This holds
if and only if $c_i > c_{i+1}$, taking $c_{n+1} = c_1$.
If $i \in [n]$ is a descent of $\pi$ then 
\be\label{ccc-eq}
c(\pi s_i) = (c_1,\dots,c_{i-1}, c_{i+1}, c_i - 1,c_{i+2},\dots, c_n),
\ee interpreting indices cyclically as necessary.
It holds that $|c(\pi)| := c_1 + c_2 + \dots c_n = \ell(\pi)$. 
The \emph{shape} $\lambda(\pi)$ of $\pi \in \tS_n$ is the transpose of the partition sorting $c(\pi^{-1})$.

A \emph{window} for an affine permutation $\pi \in \tS_n$ is a sequence of the form $[\pi(i+1), \pi(i+2),\dots,\pi(i+n)]$ where $i \in \ZZ$.
An element $\pi \in \tS_n$ is uniquely determined by any of its windows,
and a sequence of $n$ distinct integers is a window for some $\pi \in\tS_n$ if and only if the integers represent
each congruence class modulo $n$ exactly once. 

%
Write $<$ for the \emph{dominance order} on partitions, i.e., the partial order in which $\mu \leq \lambda$ if $\mu$ and $\lambda$ are partitions of $n$ such that
$\mu_1 + \dots +\mu_i \leq \lambda_1 + \dots + \lambda_i$ for all $i\geq 1$.

\begin{theorem}[{Lam \cite[Theorem 13]{Lam}}]\label{uni-thm}
If $\pi \in \tS_n$ then $\tF_\pi \in m_{\lambda(\pi)} + \sum_{\mu < \lambda(\pi)} \NN m_\nu$.
\end{theorem}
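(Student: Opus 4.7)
My plan begins with the observation that $\tF_\pi$ is already known to be a symmetric function by Lam's prior results in \cite{Lam}. So if I write $\tF_\pi = \sum_\nu a_\nu m_\nu$ summed over partitions $\nu$, then $a_\nu$ equals the coefficient of $x^\nu := x_1^{\nu_1}x_2^{\nu_2}\cdots$, which by Definition \ref{lam-def} enumerates the length-additive cyclically decreasing factorizations $\pi=\pi^1\pi^2\cdots$ with $\ell(\pi^i)=\nu_i$. In particular every $a_\nu$ lies in $\NN$, so the theorem reduces to two claims: (A) $a_{\lambda(\pi)}=1$, i.e.\ there is exactly one cyclically decreasing factorization with length sequence $(\lambda(\pi)_1,\lambda(\pi)_2,\dots)$; and (B) $a_\nu=0$ whenever $\nu\not\le\lambda(\pi)$ in dominance.

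For (A) I would construct a canonical factorization by a greedy column-peeling procedure. Define $\pi^1$ to be the cyclically decreasing element whose reduced word uses exactly the simple reflections $s_i$ with $c_i(\pi^{-1})>0$, ordered as forced by the cyclic-decrease condition. Its length is the number of nonzero entries of $c(\pi^{-1})$, which equals the number of parts of $\lambda(\pi)'$, hence equals $\lambda(\pi)_1$. Using the code update rule \eqref{ccc-eq} iteratively, one checks that $c((\pi(\pi^1)^{-1})^{-1})$ is obtained from $c(\pi^{-1})$ by decrementing each nonzero entry by $1$, so the shape of $\pi(\pi^1)^{-1}$ is $\lambda(\pi)$ with its first column removed; iterating gives a factorization with length sequence exactly $\lambda(\pi)$. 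Uniqueness would follow by arguing that any cyclically decreasing right factor of $\pi$ of length $\lambda(\pi)_1$ must use exactly this set of simple reflections in exactly this order, since dropping any descent $s_i$ with $c_i(\pi^{-1})>0$ would force the factor's length to be strictly less.

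For (B), the plan is to show that every length-additive cyclically decreasing factorization $\pi=\pi^1\pi^2\cdots$ yields a descending chain of shapes $\lambda(\pi)=\mu^{(0)}\supseteq\mu^{(1)}\supseteq\cdots$, where $\mu^{(i)}=\lambda(\pi\pi^1\cdots\pi^i)$, and each skew shape $\mu^{(i-1)}/\mu^{(i)}$ is a horizontal strip with $\ell(\pi^i)$ cells. This is an affine Pieri-type statement: right multiplication by a cyclically decreasing element of length $k$ removes a horizontal $k$-strip from the transposed shape. Granted this, the elementary fact that any chain of horizontal-strip removals from a partition $\lambda$ produces a composition whose sorted partition is dominated by $\lambda$ immediately gives $\nu\le\lambda(\pi)$.

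The main obstacle is establishing the horizontal-strip claim in the affine setting. A single descent move \eqref{ccc-eq} is easy to track, but a cyclically decreasing product must be analyzed as a coordinated sequence of such moves applied in an order determined by the cyclic-decrease constraint, and one needs to verify that the resulting cell removals all land in distinct rows of the transposed Young diagram (the defining condition of a horizontal strip). I would tackle this by induction on $\ell(\pi^i)$: the cyclic-decrease condition ensures that when $s_j$ is applied after some $s_{j'}$ with $j'\neq j+1$, the two corresponding cells lie in unrelated rows, whereas the forbidden configuration $s_{j+1}$ after $s_j$ is exactly what would place two cells in the same row. Carrying this through carefully, while respecting the cyclic identification $s_0=s_n$, is the technical heart of the argument.
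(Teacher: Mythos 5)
Your overall strategy—symmetry reduces the claim to showing (A) the coefficient of $m_{\lambda(\pi)}$ is $1$ and (B) the coefficient of $m_\nu$ vanishes unless $\nu\le\lambda(\pi)$, with (B) proved by interpreting a length-additive cyclically decreasing factorization as a chain of shapes that descends by controlled steps—is indeed the shape of Lam's argument, and the reduction via the classical fact $K_{\lambda\alpha}>0\Leftrightarrow\operatorname{sort}(\alpha)\le\lambda$ is the correct way to finish once the Pieri-type claim is in hand. But in its present form the proposal has real gaps that you have only partially acknowledged.

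First, the direction of peeling is inconsistent. You want $\pi^1$ to be the first (leftmost) factor, so the remainder is $(\pi^1)^{-1}\pi$, not $\pi(\pi^1)^{-1}$, and likewise $\mu^{(i)}=\lambda(\pi\pi^1\cdots\pi^i)$ cannot be what you intend (it should be $\lambda\bigl((\pi^1\cdots\pi^i)^{-1}\pi\bigr)$). This matters because the code recursion \eqref{ccc-eq} is a right-multiplication rule; applying it to $\pi^{-1}$ naturally tracks a right factor of $\pi^{-1}$, equivalently a left factor of $\pi$, and you must keep the bookkeeping of codes, inverses, and transposes coherent throughout. As written, the phrase ``$\lambda(\pi)$ with its first column removed'' should be ``first row removed'' given your conventions, which suggests the intended picture is not yet pinned down. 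Second, step (A) is incomplete at two points: you assert without argument that the cyclically decreasing element with support $\{s_i:c_i(\pi^{-1})>0\}$ is length-additive with the remainder (i.e., that it really factors off reducedly), and the uniqueness argument ``dropping any descent $s_i$ with $c_i(\pi^{-1})>0$ would force the factor's length to be strictly less'' is not a proof—there is no a priori reason a different proper subset of $\{s_1,\dots,s_n\}$ of the same cardinality could not also give a valid cyclically decreasing left factor. Third, and most importantly, the horizontal-strip claim that each removal is a horizontal strip in the transposed shape is exactly the content that needs to be established, and you correctly flag it as the ``technical heart''—but you should be aware that this is not a routine induction, and it is not obviously a restatement of the affine Pieri rule for affine Schur functions (whose combinatorics involve cylindric shapes rather than ordinary horizontal strips). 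The paper itself does not prove this theorem; it cites Lam's Theorem 13, so the standard to compare against is Lam's proof, which does the code-tracking carefully precisely where your sketch waves its hands. In short: the scaffolding is right, but the load-bearing lemma is unproven and the code/peeling bookkeeping needs to be redone with consistent conventions before this can be called a proof.
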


Let $\Par^n$ be the set of all partitions $\lambda$ with parts all at most $n-1$.
Let $\Sym^{(n)} = \QQ\spanning\{ m_\lambda : \lambda \in \Par^n\}$.
This theorem implies that 
$ \QQ\spanning\{ \tF_\pi : \pi \in \tS_n \}
= \Sym^{(n)}$.

\begin{example}\label{lambda-eg}
Suppose $n=4$ and $\pi = [-3,3,4,6]$. Then we have $c(\pi)=(0,1,1,2)$ and $c(\pi^{-1}) = (4,0,0,0)$.
So $\lambda(\pi) = (4)^T = (1,1,1,1)$ and $\lambda(\pi^{-1}) = (2,1,1)^T = (3,1)$.
Moreover, $\tF_\pi=m_{1111}$ and $\tF_{\pi^{-1}} = m_{31}+m_{22}+m_{211}+m_{1111}$.
\end{example}

Let $\DesR(\pi) = \{ s_i : i\in\ZZ\text{ is a descent of }\pi\} = \{ s \in \{ s_1,s_2,\dots,s_n\} : \ell(\pi s) < \ell(\pi)\}$
and $\DesL(\pi) = \DesR(\pi^{-1})$.
An element $\pi \in \tS_n$ is \emph{Grassmannian} if $\pi^{-1}(1)  < \pi^{-1}(2) < \dots < \pi^{-1}(n)$.
This occurs if and only if $\DesL(\pi) \subset \{ s_n\}$, or equivalently if $c(\pi^{-1})$ is weakly increasing.

\begin{definition}
The \emph{affine Schur function} $\tF_\lambda$ indexed by $\lambda \in \Par^n$
is the Stanley symmetric function
$\tF_\lambda = \tF_{\pi}$ where $\pi \in \tS_n$ is the unique Grassmannian element of shape $ \lambda$.
\end{definition}

Lam has shown that the symmetric functions $\tF_\pi$ are \emph{affine Schur positive} in the following sense:

\begin{theorem}[{Lam \cite[Corollary 8.5]{Lam2008}}]\label{+-thm}
$\NN\spanning\{ \tF_\pi : \pi \in \tS_n \} = \NN\spanning\{ \tF_\lambda : \lambda \in \Par^n\}.$
\end{theorem}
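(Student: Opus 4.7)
The containment $\NN\spanning\{ \tF_\lambda : \lambda \in \Par^n\} \subseteq \NN\spanning\{ \tF_\pi : \pi \in \tS_n\}$ is automatic, since by definition each $\tF_\lambda$ is itself an element of the set $\{\tF_\pi : \pi \in \tS_n\}$, namely the one indexed by the unique Grassmannian permutation of shape $\lambda$. So the content of the theorem is the reverse inclusion: every affine Stanley symmetric function admits an expansion $\tF_\pi = \sum_{\lambda \in \Par^n} c_{\pi,\lambda}\, \tF_\lambda$ with $c_{\pi,\lambda} \in \NN$. My plan is to first establish that such an expansion exists uniquely over $\ZZ$ by triangularity, then to show positivity by a combinatorial or algebraic argument.

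For the first step, apply Theorem~\ref{uni-thm} both to $\pi$ and to the Grassmannian permutation of shape $\mu$ for each $\mu \in \Par^n$. This gives $\tF_\pi \in m_{\lambda(\pi)} + \sum_{\mu < \lambda(\pi)} \NN m_\mu$ and $\tF_\mu \in m_\mu + \sum_{\nu < \mu} \NN m_\nu$. In particular, $\{\tF_\mu : \mu \in \Par^n\}$ is unitriangular with respect to the monomial basis of $\Sym^{(n)}$, hence is a $\QQ$-basis. The remark following Theorem~\ref{uni-thm} states that $\Sym^{(n)} = \QQ\spanning\{\tF_\pi : \pi \in \tS_n\}$, so in particular each $\tF_\pi$ lies in $\Sym^{(n)}$ and has a unique expansion $\tF_\pi = \sum_\mu c_{\pi,\mu}\, \tF_\mu$ with $c_{\pi,\mu} \in \QQ$. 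Comparing coefficients of monomial symmetric functions inductively on the dominance order and using integrality of monomial-expansions shows $c_{\pi,\mu} \in \ZZ$, and moreover $c_{\pi,\mu} = 0$ unless $\mu \leq \lambda(\pi)$, with $c_{\pi, \lambda(\pi)} = 1$.

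The main obstacle is showing $c_{\pi,\mu} \geq 0$; triangular expansion alone does not imply this, because subtracting $\tF_{\lambda(\pi)}$ from $\tF_\pi$ can produce negative monomial coefficients in principle. The approach I would take follows Lam's strategy of passing to the affine nilHecke (or affine nilCoxeter) algebra: realize the Stanley symmetric functions as images, under a certain character map $j$, of Bott--Samelson products $A_{i_1}A_{i_2}\cdots A_{i_l}$ built from reduced expressions, and realize the affine Schur functions as images of a distinguished basis of the so-called affine Fomin--Stanley subalgebra (spanned by images of cyclically decreasing elements). The goal is then to show that the Bott--Samelson element $A_\pi$ expands with nonnegative integer coefficients in this distinguished basis. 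This in turn reduces to a positivity statement about the product of cyclically decreasing elements in the affine nilHecke algebra, which can be proved either algebraically using the fact that the distinguished basis is closed under multiplication with nonnegative structure constants, or combinatorially via an affine Edelman--Greene insertion sending a reduced word for $\pi$ to a pair consisting of a standard Young tableau and an object recording the Schur expansion.

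The hardest step is this final positivity: producing the bijection, or equivalently proving the nonnegativity of the structure constants in the Fomin--Stanley-type subalgebra, is where all the genuine work lies. Once it is in hand, combining it with the unique triangular expansion from the second paragraph forces the coefficients $c_{\pi,\mu}$ to coincide with manifestly nonnegative quantities, proving the inclusion $\NN\spanning\{\tF_\pi\} \subseteq \NN\spanning\{\tF_\lambda\}$ and completing the argument.
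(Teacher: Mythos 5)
Before assessing the argument itself: this theorem is not proved in the paper at all. It is stated with an explicit citation to Lam's \emph{Schubert polynomials for the affine Grassmannian} (Corollary 8.5 there), so there is no internal proof to compare against; you were asked to blindly reconstruct a result that the paper imports as a black box.

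Taking your attempt on its own terms, the first two paragraphs are fine: the containment $\NN\spanning\{\tF_\lambda\}\subseteq\NN\spanning\{\tF_\pi\}$ is immediate, and the unique $\ZZ$-triangular expansion of each $\tF_\pi$ into the $\tF_\lambda$'s via Theorem~\ref{uni-thm} is correct and standard. The positivity step, however, is a genuine gap. Your ``reduction'' to the claim that the distinguished basis of the affine Fomin--Stanley subalgebra multiplies with nonnegative structure constants is circular: that claim is essentially a reformulation of affine Schur positivity itself and is the content of Lam's theorem, not a lemma available for free. Likewise, gesturing at ``an affine Edelman--Greene insertion'' amounts to assuming what you want to prove — you do not construct such a bijection, and none was known at the time Lam established the result (affine insertion algorithms of the required kind appeared only later). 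In Lam's actual argument the positivity is ultimately geometric rather than purely algebraic or combinatorial: after identifying the affine Fomin--Stanley subalgebra with the homology of the affine Grassmannian (Peterson's theorem), the expansion coefficients of $\tF_\pi$ in the $\tF_\lambda$'s are recognized as Schubert-type structure constants, and their nonnegativity follows from a Graham/Kumar-type positivity theorem for flag varieties of Kac--Moody groups. Your outline correctly isolates where the difficulty sits, but it defers rather than closes that step, so as written this is a plan of attack, not a proof.
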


The Stanley symmetric functions indexed by $\pi \in S_n \subsetneq \tS_n$ have the following positivity property:

\begin{theorem}[See \cite{EG,Lascoux}]
$\NN\spanning\{ \tF_\pi : \pi \in S_n \} = \NN\spanning\{ s_\lambda : \lambda \in \Par^n,\ \lambda \subset (n-1,\dots,2,1)\}.$
\end{theorem}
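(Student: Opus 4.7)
The plan is to verify both inclusions by reducing to classical results for finite (non-affine) Stanley symmetric functions, using the identity $\tF_\pi = F_{\pi^{-1}}$ (valid for $\pi \in S_n$), where $F$ denotes Stanley's original version.

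For the inclusion $\subseteq$, I would invoke the Edelman--Greene theorem \cite{EG}: for $\pi \in S_n$, one has $F_{\pi^{-1}} = \sum_\lambda a_{\pi,\lambda}\, s_\lambda$ with $a_{\pi,\lambda} \in \NN$, where $a_{\pi,\lambda}$ counts \emph{increasing tableaux} of shape $\lambda$ whose reading word is a reduced word for $\pi^{-1}$. Every reduced word of a permutation in $S_n$ uses only letters in $\{1,\dots,n-1\}$, so the entries of every contributing tableau lie in that set. Since entries of an increasing tableau strictly increase along both rows and columns, the entry in cell $(i,j)$ is at least $i+j-1$. Combined with the upper bound $n-1$, this forces $j \leq n-i$, so $\lambda \subset (n-1, n-2, \dots, 1)$ (and in particular $\lambda \in \Par^n$).

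For the inclusion $\supseteq$, I would exhibit, for each $\lambda \subset (n-1,\dots,1)$, a permutation $\pi_\lambda \in S_n$ with $\tF_{\pi_\lambda} = s_\lambda$. The condition $\lambda_i \leq n-i$ is exactly what allows $(\lambda_1,\dots,\lambda_{n-1})$ to be the code of a (necessarily unique) permutation $w_\lambda \in S_n$; moreover $w_\lambda$ is \emph{dominant}, hence $132$-avoiding, hence also $2143$-avoiding (since every $2143$-pattern contains a $132$-pattern), hence vexillary. Stanley's theorem \cite{Stan} then gives $F_{w_\lambda} = s_\lambda$ directly (the sorted code of $w_\lambda$ is already $\lambda$). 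Setting $\pi_\lambda := w_\lambda^{-1}$ yields $\tF_{\pi_\lambda} = F_{w_\lambda} = s_\lambda$, as required.

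The main obstacle, in my view, is notational rather than mathematical: one must carefully align Stanley's, Lam's, and the $\pi \leftrightarrow \pi^{-1}$ conventions (including the transpose built into Lam's $\lambda(\pi)$) before applying the cited theorems. Once that bookkeeping is in place, both directions reduce cleanly to Edelman--Greene positivity and the vexillary Schur identity, and no genuinely new combinatorial input is needed beyond the references.
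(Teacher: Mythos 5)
Your proposal addresses a theorem that the paper itself does not prove (it is stated with citations to Edelman--Greene and Lascoux--Sch\"utzenberger, and no argument is given), so there is no paper proof to compare against. Your reconstruction is essentially correct and complete. For the inclusion $\subseteq$, the appeal to Edelman--Greene is right: the insertion tableau in their correspondence is strictly increasing along rows and down columns, its entries lie in $\{1,\dots,n-1\}$ because every reduced word of an element of $S_n$ uses only these letters, and the bound ``entry at $(i,j) \geq i+j-1$'' then forces the shape into the staircase $(n-1,\dots,2,1)$. For the inclusion $\supseteq$, the construction of a dominant permutation $w_\lambda$ with code $\lambda$, together with dominant $\Rightarrow$ $132$-avoiding $\Rightarrow$ $2143$-avoiding $\Rightarrow$ vexillary, and the vexillary Schur identity, is the right idea.

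One small caveat, which you already flag: depending on the convention, Stanley's vexillary formula returns $s_\lambda$ or $s_{\lambda^T}$ when the sorted code of $w_\lambda$ is $\lambda$. In the convention forced by this paper's identification $\tF_\pi = F_{\pi^{-1}}$ and Theorem~\ref{uni-thm} (leading monomial $m_{\lambda(\pi)}$ with $\lambda(\pi)$ the \emph{transpose} of the sorted code of $\pi^{-1}$), one in fact gets $\tF_{w_\lambda^{-1}} = s_{\lambda^T}$, not $s_\lambda$. For instance, with $n=4$, the dominant permutation $w=4123$ has code $(3,0,0,0)$ and $\tF_{w^{-1}} = \tF_{2341} = e_3 = s_{(1,1,1)}$, not $s_{(3)}$. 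This does not damage the argument: because the staircase is self-conjugate, $\lambda \mapsto \lambda^T$ permutes the index set, so as $\lambda$ ranges over partitions contained in $(n-1,\dots,2,1)$ the permutations $w_\lambda^{-1}$ still realize every Schur function $s_\mu$ with $\mu$ in the staircase. It would be cleaner to state the final step as ``$\tF_{w_\lambda^{-1}} = s_{\lambda^T}$, and since $\lambda \subset (n-1,\dots,1)$ iff $\lambda^T \subset (n-1,\dots,1)$, every $s_\mu$ with $\mu$ in the staircase is realized.'' With that adjustment, the argument is airtight.
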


Not all affine Schur functions are contained in $ \NN\spanning\{ s_\lambda : \lambda \in \Par^n\}$
so this theorem does not hold for arbitrary $\tF_\pi$.

Write $w \mapsto w^*$ for the unique group automorphism of $\tS_n$ with $s_i \mapsto s_i^* := s_{n-i}$ for $i \in \ZZ$.
If $\lambda \in \Par^n$ then there exists a unique Grassmannian permutation $\pi$ with $\lambda = \lambda(\pi)$,
and one defines $\lambda^* = \lambda(\pi^*)$.
Let $\lambda'(\pi)= \lambda(\pi^{-1})^*$ for $\pi \in \tS_n$
and define $<^*$ to be the partial order on $\Par^n$ with $\lambda <^* \mu$ if and only if $\mu^* < \lambda^*$.

\begin{example}
Suppose $n=4$ and $\pi =s_1s_2s_3s_4= [-3,3,4,6]$. Then $\pi^*=s_3s_2s_1s_4=[-1,1,2,8]$ and $c(\pi^*) = (0,0,0,4)$.
Since $\pi^{-1}$ is Grassmannian, we have $\lambda'(\pi) = \lambda((\pi^{-1})^*) =  \lambda((\pi^{*})^{-1}) = (4)^T = (1,1,1,1)$.
\end{example}

The next theorem shows that $\tF_\pi$ has a unitriangular expansion into $F_{\lambda}$'s.

\begin{theorem}[{Lam \cite{Lam}}]\label{schur-thm}
If $\pi \in \tS_n$ then 
$ \tF_\pi \in \( F_{\lambda'(\pi)} + \sum_{ \lambda'(\pi) <^* \mu} \NN \tF_\mu\) \cap \( \tF_{\lambda(\pi)} + \sum_{\mu < \lambda(\pi)} \NN \tF_\mu\).$
\end{theorem}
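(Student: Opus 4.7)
The plan is to deduce both containments from Theorems~\ref{uni-thm} and~\ref{+-thm} by inverting unitriangular transition matrices, handling each half of the intersection separately.

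\textbf{The dominance-order half.} By Theorem~\ref{+-thm} we can write $\tF_\pi = \sum_{\lambda \in \Par^n} a_\lambda \tF_\lambda$ with $a_\lambda \in \NN$. Each affine Schur function $\tF_\lambda$ is itself an affine Stanley symmetric function $\tF_\sigma$ for the Grassmannian $\sigma \in \tS_n$ of shape $\lambda$, so Theorem~\ref{uni-thm} gives $\tF_\lambda \in m_\lambda + \sum_{\nu < \lambda} \NN m_\nu$. Thus $\{\tF_\lambda : \lambda \in \Par^n\}$ is obtained from $\{m_\lambda : \lambda \in \Par^n\}$ by a change of basis that is unitriangular in dominance order. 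Comparing this with $\tF_\pi \in m_{\lambda(\pi)} + \sum_{\mu < \lambda(\pi)} \NN m_\mu$, also from Theorem~\ref{uni-thm}, and picking off the $<$-maximal $\lambda$ with $a_\lambda \neq 0$, forces $a_{\lambda(\pi)} = 1$ and $a_\lambda = 0$ for all $\lambda \not\leq \lambda(\pi)$. This proves the second half of the intersection.

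\textbf{The $<^*$ half.} The strategy is to repeat the above argument with $(\lambda(\pi), <)$ replaced by $(\lambda'(\pi), <^*)$. What is needed is a companion of Theorem~\ref{uni-thm} saying that $\tF_\pi$ has a monomial expansion whose $<^*$-maximal index is $\lambda'(\pi) = \lambda(\pi^{-1})^*$. One route is to apply the star automorphism $s_i \mapsto s_{n-i}$ of $\tS_n$ together with inversion $\pi \mapsto \pi^{-1}$, and track how codes, Grassmannian elements, and hence shapes transform. Since by definition $\lambda \mapsto \lambda^*$ is an order-reversing bijection between $(\Par^n, <)$ and $(\Par^n, <^*)$, a unitriangular expansion in one order translates into a unitriangular expansion in the other after reindexing. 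Combining the resulting companion statement with Theorem~\ref{+-thm} and the same triangular-inversion argument as in the first step then yields $\tF_\pi \in \tF_{\lambda'(\pi)} + \sum_{\lambda'(\pi) <^* \mu} \NN \tF_\mu$.

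\textbf{Main obstacle.} The delicate step is Step~2, since neither $\pi \mapsto \pi^{-1}$ nor $\pi \mapsto \pi^*$ preserves $\tF_\pi$ in the affine setting: Example~\ref{lambda-eg} already shows $\tF_\pi \neq \tF_{\pi^{-1}}$, and a direct inspection of cyclically decreasing factorizations indicates that the $*$-automorphism likewise need not fix $\tF_\pi$. Hence the required $<^*$-triangular monomial expansion cannot be read off from Theorem~\ref{uni-thm} by a mere substitution. The cleanest approach I see is to reprove Theorem~\ref{uni-thm} using a reverse-cyclic decreasing factorization model for $\tF_\pi$ together with a dual notion of code, arranged so that the triangularity is naturally with respect to $<^*$ with leading index $\lambda'(\pi)$; once this companion is in hand, the rest of the argument is a formal matter of inverting compatible unitriangular matrices.
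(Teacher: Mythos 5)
Your dominance-order half is correct: combining Theorem~\ref{+-thm} (affine Schur positivity) with the dominance-unitriangular $m$-expansion of $\tF_\lambda$ (the Grassmannian case of Theorem~\ref{uni-thm}), linear independence of the $\tF_\lambda$ plus the $m$-expansion of $\tF_\pi$ from Theorem~\ref{uni-thm} forces $a_{\lambda(\pi)}=1$ and $a_\lambda=0$ for $\lambda\not\le\lambda(\pi)$. Note that the paper itself gives no proof of Theorem~\ref{schur-thm} --- it is cited from Lam --- so there is no in-paper proof to compare against.

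The $<^*$ half is where the proposal breaks down: you correctly observe that $\pi\mapsto\pi^{-1}$ and $\pi\mapsto\pi^*$ do not fix $\tF_\pi$, so a naive substitution in Theorem~\ref{uni-thm} does not yield the needed $<^*$-triangular expansion, and you leave the argument as a program (``reprove Theorem~\ref{uni-thm} with a reverse-cyclic model''). That is an unnecessary detour. The missing step is already in the paper: Theorem~\ref{inv-cong-thm} gives $\omega^+(\tF_\sigma)=\tF_{\sigma^{-1}}$ for every $\sigma\in\tS_n$, and by construction $\omega^+(\tF_\lambda)=\tF_{\lambda^*}$. Apply the dominance-order half to $\pi^{-1}$ to get
\[
\tF_{\pi^{-1}}\in \tF_{\lambda(\pi^{-1})}+\sum_{\mu<\lambda(\pi^{-1})}\NN\,\tF_\mu,
\]
then apply the linear map $\omega^+$. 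The left side becomes $\tF_\pi$, the leading term becomes $\tF_{\lambda(\pi^{-1})^*}=\tF_{\lambda'(\pi)}$, and reindexing $\nu=\mu^*$ in the sum turns the condition $\mu<\lambda(\pi^{-1})$ into $\nu^*<\lambda'(\pi)^*$, i.e.\ $\lambda'(\pi)<^*\nu$, by the definition of $<^*$. This gives exactly $\tF_\pi\in\tF_{\lambda'(\pi)}+\sum_{\lambda'(\pi)<^*\nu}\NN\,\tF_\nu$. So the argument closes cleanly with tools already at hand; no new factorization model or dual code is needed.
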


The affine Schur functions form a basis for $\Sym^{(n)}$, so there exists a unique linear involution $\omega^+ : \Sym^{(n)} \to \Sym^{(n)}$ with
$\omega^+(\tF_\lambda) = \tF_{\lambda^*}$ for all $\lambda \in \Par^n$.
This map can be defined directly in terms of the usual elementary, homogeneous, and monomial symmetric functions; see \cite[\S9]{Lam}.

\begin{theorem}[{Lam \cite[Theorem 15 and Proposition 17]{Lam}}]\label{inv-cong-thm}
 If $\pi \in \tS_n$ then $\omega^+(\tF_\pi) = \tF_{\pi^*} = \tF_{\pi^{-1}}$.
\end{theorem}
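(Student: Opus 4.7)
My approach is to prove the two equalities $\tF_{\pi^*}=\tF_{\pi^{-1}}$ and $\omega^+(\tF_\pi)=\tF_{\pi^*}$ separately, starting with the first.

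For $\tF_{\pi^{-1}}=\tF_{\pi^*}$, I would construct a length-preserving bijection between cyclically decreasing reduced words for $\pi$ and for $\pi^{-*}:=(\pi^{-1})^*=(\pi^*)^{-1}$. The reverse-and-star operation $s_{i_1}s_{i_2}\cdots s_{i_l}\mapsto s_{n-i_l}s_{n-i_{l-1}}\cdots s_{n-i_1}$ sends a reduced word for $\pi$ to one for $\pi^{-*}$, and a direct unpacking shows that both words are cyclically decreasing under exactly the same constraint $s_{i_j+1}\neq s_{i_k}$ for $j<k$, after reindexing the reversed word. Applying this map to each factor of a cyclically decreasing factorization $\pi=\pi^1\pi^2\cdots\pi^k$ yields a cyclically decreasing factorization $\pi^{-*}=(\pi^k)^{-*}\cdots(\pi^1)^{-*}$ with the same multiset of factor lengths. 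Since $\tF_\pi$ is a symmetric power series (a fact established by Lam), the coefficient of $m_\lambda$ in $\tF_\pi$ equals that in $\tF_{\pi^{-*}}$, giving $\tF_\pi=\tF_{\pi^{-*}}$; substituting $\pi\mapsto\pi^{-1}$ yields $\tF_{\pi^{-1}}=\tF_{\pi^*}$.

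For $\omega^+(\tF_\pi)=\tF_{\pi^*}$, I would first verify the Grassmannian case. The map $*$ preserves Grassmannian elements: since $*$ is a length-preserving automorphism and $s_n^*=s_0=s_n$, the condition $\DesL(\pi)\subset\{s_n\}$ is preserved. Thus if $\pi$ is Grassmannian of shape $\lambda$, then $\pi^*$ is also Grassmannian of shape $\lambda(\pi^*)=\lambda^*$ by definition, giving $\tF_{\pi^*}=\tF_{\lambda^*}=\omega^+(\tF_\lambda)=\omega^+(\tF_\pi)$. For general $\pi$, Theorem~\ref{schur-thm} provides the expansion $\tF_\pi=\tF_{\lambda'(\pi)}+\sum_{\lambda'(\pi)<^*\mu}d_\mu^\pi\tF_\mu$; applying $\omega^+$ termwise and reindexing by $\nu=\mu^*$ (using $\lambda<^*\mu$ iff $\mu^*<\lambda^*$) yields $\omega^+(\tF_\pi)=\tF_{\lambda'(\pi)^*}+\sum_{\nu<\lambda'(\pi)^*}d_{\nu^*}^\pi\tF_\nu$. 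From the first equality combined with Theorem~\ref{uni-thm} (which identifies the leading monomial shape of $\tF_\sigma$ as $\lambda(\sigma)$), one obtains $\lambda(\pi^*)=\lambda(\pi^{-1})=\lambda'(\pi)^*$, so the resulting expression has the same leading $\tF_\lambda$-term as the other unitriangular expansion of $\tF_{\pi^*}$ afforded by Theorem~\ref{schur-thm}.

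The main obstacle is matching the non-leading coefficients of these two affine Schur expansions of $\omega^+(\tF_\pi)$ and $\tF_{\pi^*}$, since agreement of leading terms does not automatically propagate to the lower-order terms in a unitriangular setting. To close this gap, I would likely need to invoke the explicit formula for $\omega^+$ on the elementary, homogeneous, and monomial bases referenced in \cite[\S9]{Lam} and compare both sides termwise against the monomial expansion of Theorem~\ref{uni-thm}, or alternatively establish a direct combinatorial symmetry $d_\lambda^{\pi^*}=d_{\lambda^*}^\pi$ of the expansion coefficients that refines the cyclically decreasing bijection from the first part.
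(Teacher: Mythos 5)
This theorem is quoted in the paper directly from Lam \cite[Theorem 15 and Proposition 17]{Lam} with no internal proof, so there is no paper argument to compare against; I evaluate your proposal on its own.

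Your argument for the first equality $\tF_{\pi^*}=\tF_{\pi^{-1}}$ is correct. The reverse-and-star map $s_{i_1}\cdots s_{i_l}\mapsto s_{n-i_l}\cdots s_{n-i_1}$ sends reduced words for $\pi$ to reduced words for $(\pi^*)^{-1}$ and preserves the cyclically decreasing condition: the constraint $i_j+1\not\equiv i_k \pmod{n}$ for $j<k$ becomes, after reindexing the reversed word, $i_{k'}+1\not\equiv i_{j'}\pmod{n}$ for $k'<j'$, which is the same constraint. Applied factor by factor, this gives a length-sequence-reversing bijection between cyclically decreasing factorizations of $\pi$ and of $(\pi^*)^{-1}$, and the known symmetry of $\tF_\pi$ converts this into $\tF_\pi=\tF_{(\pi^*)^{-1}}$, hence $\tF_{\pi^*}=\tF_{\pi^{-1}}$. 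This is precisely the content of Lam's Proposition 17.

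The second equality $\omega^+(\tF_\pi)=\tF_{\pi^*}$ is where the genuine gap lies, and you have identified it yourself. Matching the leading affine Schur term in two unitriangular expansions does not determine the remaining coefficients, so the appeal to Theorem~\ref{schur-thm} does not close the argument. More fundamentally, your second fallback — proving $d^{\pi^*}_\lambda=d^\pi_{\lambda^*}$ directly — is not actually a reduction: since $\omega^+$ is \emph{defined} on the affine Schur basis by $\tF_\lambda\mapsto\tF_{\lambda^*}$, and since $\pi^*$ is the Grassmannian element of shape $\lambda^*$ whenever $\pi$ is Grassmannian of shape $\lambda$, that coefficient identity is logically equivalent to $\omega^+(\tF_\pi)=\tF_{\pi^*}$ itself. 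The only viable route is your first suggestion: one needs the independent characterization of $\omega^+$ in terms of elementary, homogeneous, and monomial symmetric functions from \cite[\S9]{Lam}, and must then verify directly that this map carries the cyclically decreasing generating function defining $\tF_\pi$ to the one defining $\tF_{\pi^*}$. That verification is the actual content of Lam's Theorem 15 and is not carried out here, so the argument for the second equality is incomplete as written.
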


\section{Definitions and basic properties}\label{defn-sect}

Assume $n$ is a positive even integer. Define $\tI_n$ as the set of affine permutations $w\in\tS_n$
with $i=w(w(i))$ for all $i \in \ZZ$. We call these elements \emph{affine involutions}. Define $\cF_n$ as the set of affine involutions $w\in\tI_n$ with $w(i)\neq i$ for all $i \in \ZZ$. Elements of $\cF_n$ are \emph{affine fixed-point-free (FPF) involutions}. 

For integers $i<j\not\equiv i \modu n)$, define $t_{ij} \in \tS_n$ as the permutation exchanging $i+kn$ and $j+kn$ for all $k\in\ZZ$, and fixing all integers not in $\{i,j\} + n\ZZ$.
Viewing $\tS_n$ as a Coxeter group generated by $\{s_1,s_2,\dots,s_n\}$,
the set $\{ t_{ij} : i < j \not\equiv i \modu n)\}$ consists of all reflections in $\tS_n$.

We can use \emph{winding diagrams} to represent elements of $\cF_n$:
\[
\begin{tikzpicture}[baseline=0,scale=0.18,label/.style={postaction={ decorate,transform shape,decoration={ markings, mark=at position .5 with \node #1;}}}]
{
\draw[fill,lightgray] (-12,0) circle (4.0);
\node at (-12, 4.0) {$_\bullet$};
\node at (-12, 2.8) {$_1$};
\node at (-16, 0) {$_\bullet$};
\node at (-14.8, 0) {$_4$};
\node at (-12, -4) {$_\bullet$};
\node at (-12, -2.8) {$_3$};
\node at (-8.0, 0.0) {$_\bullet$};
\node at (-9.2, 0.0) {$_2$};
\draw[thick] (-12, 4.0) arc (135:-45:2.8284);
\draw[thick] (-12, -4.0) arc (-45:-225:2.8284);
}
\end{tikzpicture}
\qquad\qquad
\begin{tikzpicture}[baseline=0,scale=0.18,label/.style={postaction={ decorate,transform shape,decoration={ markings, mark=at position .5 with \node #1;}}}]
{
\draw[fill,lightgray] (0,0) circle (4.0);
\node at (2.44929359829e-16, 4.0) {$_\bullet$};
\node at (1.71450551881e-16, 2.8) {$_1$};
\node at (2.82842712475, 2.82842712475) {$_\bullet$};
\node at (1.97989898732, 1.97989898732) {$_2$};
\node at (4.0, 0.0) {$_\bullet$};
\node at (2.8, 0.0) {$_3$};
\node at (2.82842712475, -2.82842712475) {$_\bullet$};
\node at (1.97989898732, -1.97989898732) {$_4$};
\node at (2.44929359829e-16, -4.0) {$_\bullet$};
\node at (1.71450551881e-16, -2.8) {$_5$};
\node at (-2.82842712475, -2.82842712475) {$_\bullet$};
\node at (-1.97989898732, -1.97989898732) {$_6$};
\node at (-4.0, -4.89858719659e-16) {$_\bullet$};
\node at (-2.8, -3.42901103761e-16) {$_7$};
\node at (-2.82842712475, 2.82842712475) {$_\bullet$};
\node at (-1.97989898732, 1.97989898732) {$_8$};
\draw [-,>=latex,domain=0:100,samples=100,densely dotted] plot ({(4.0 + 4.0 * sin(180 * (0.5 + asin(-0.9 + 1.8 * (\x / 100)) / asin(0.9) / 2))) * cos(90 - (0.0 + \x * 4.95))}, {(4.0 + 4.0 * sin(180 * (0.5 + asin(-0.9 + 1.8 * (\x / 100)) / asin(0.9) / 2))) * sin(90 - (0.0 + \x * 4.95))});
\draw [-,>=latex,domain=0:100,samples=100] plot ({(4.0 + 2.0 * sin(180 * (0.5 + asin(-0.9 + 1.8 * (\x / 100)) / asin(0.9) / 2))) * cos(90 - (90.0 + \x * 1.35))}, {(4.0 + 2.0 * sin(180 * (0.5 + asin(-0.9 + 1.8 * (\x / 100)) / asin(0.9) / 2))) * sin(90 - (90.0 + \x * 1.35))});
\draw [-,>=latex,domain=0:100,samples=100] plot ({(4.0 + 2.0 * sin(180 * (0.5 + asin(-0.9 + 1.8 * (\x / 100)) / asin(0.9) / 2))) * cos(90 - (270.0 + \x * 1.35))}, {(4.0 + 2.0 * sin(180 * (0.5 + asin(-0.9 + 1.8 * (\x / 100)) / asin(0.9) / 2))) * sin(90 - (270.0 + \x * 1.35))});
\draw [-,>=latex,domain=0:100,samples=100] plot ({(4.0 + 2.0 * sin(180 * (0.5 + asin(-0.9 + 1.8 * (\x / 100)) / asin(0.9) / 2))) * cos(90 - (180.0 + \x * 1.35))}, {(4.0 + 2.0 * sin(180 * (0.5 + asin(-0.9 + 1.8 * (\x / 100)) / asin(0.9) / 2))) * sin(90 - (180.0 + \x * 1.35))});
}
\end{tikzpicture}
\]
In these diagrams, the numbers $1,2,\dots,n$ are arranged in order around a circle,
and every number $i$ is connected to another number $j$ by a path the winds around the circle in a clockwise direction. A curve that goes from $i$ to $j$ and travels
$m_{ij}$ times past the vertex 1 represents the cycle $t_{i,j+mn}$.
(These curves are drawn in different styles for readability.)
The left example shows $y= t_{1,2}t_{3,4}\in\cF_4$, while the 
right example shows $z= t_{1,12} \cdot t_{3,6}\cdot t_{5,8}\cdot t_{7,10}  \in \cF_8$.
Since none of the curves in the diagram for $y$ go past the point 1, we have $y \in S_4 \subsetneq \tS_4$.

\begin{definition}
Given $\pi \in\tS_n$, define $\beta(\pi)=\frac{1}{2n}\sum_{i=1}^n|\pi(i)-r_n(\pi(i))|,$ where $r_n(i)$ for $i \in \ZZ$ denotes the unique element of $[n]$ that satisfies $r_n(i)\equiv i\modu{n})$. For $z \in \cF_n$, define $\sfpf(z)=(-1)^{\beta(z)}$.
\end{definition}


\begin{lemma}\label{beta-inv-lem}
Let $z \in \cF_n$ and $l=n/2$. 
 Suppose $a_1<a_2<\dots <a_l$ are the numbers $a \in  [n]$ with $a < z(a)$ and $b_i=z(a_i)$.
Then $\beta(z)=\frac{1}{n}\sum_{i=1}^l(a_i+b_i)-\frac{n+1}{2}$.
\end{lemma}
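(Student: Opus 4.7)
The plan is to rewrite $\beta(z)$ as a sum of winding numbers, then match those windings against the formula $\frac{1}{n}\sum(a_i+b_i)-\frac{n+1}{2}$ by classifying elements of $[n]$ according to where their $z$-image lies.

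First I would observe that for each $i\in[n]$ we can write $z(i)=r_n(z(i))+m_i n$ for a unique $m_i\in\ZZ$, so that $|z(i)-r_n(z(i))|=|m_i|n$ and therefore $\beta(z)=\tfrac{1}{2}\sum_{i=1}^n|m_i|$. Next I would partition $[n]$ into four types according to the pair $(\text{sign of }z(i)-i,\ z(i)\in[n]?)$:
type A1 consists of $a_i$ with $b_i\in[n]$;
type A2 consists of $a_i$ with $b_i>n$;
type B consists of the $z$-partners in $[n]$ of type A1 (so $c\in[n]$ with $z(c)<c$ and $z(c)\in[n]$);
type C consists of $c\in[n]$ with $z(c)<1$.
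For types A1 and B one has $m_i=0$, while for type A2 with $b_i=a_i'+kn$ ($a_i'\in[n]$, $k\ge1$) the relation $z(i+n)=z(i)+n$ forces $z(a_i')=a_i-kn\le 0$, so $a_i'$ is a type C element sharing the same winding $k$. This yields a natural bijection between type A2 pairs and type C elements under which both contribute $kn$ to $\sum|m_i|n$, giving
\[
\beta(z)=\sum_{\substack{i=1,\dots,l\\ b_i>n}} k_i\quad\text{where}\quad b_i=r_n(b_i)+k_i n.
\]

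Now I would expand $\sum_{i=1}^l(a_i+b_i)$ by splitting into A1-pairs and A2-pairs. Each A1-pair $(a_i,b_i)$ contributes its two elements of $[n]$, covering every type A1 and type B element exactly once. Each A2-pair contributes the A2 element $a_i\in[n]$ together with $b_i=c+k_in$, where $c$ is the paired type C element; summed over all A2-pairs this covers every type A2 and every type C element exactly once, plus $n\sum k_i$. Since types A1, B, A2, C partition $[n]$, summing gives
\[
\sum_{i=1}^l(a_i+b_i)=\sum_{j=1}^n j + n\sum k_i=\tfrac{n(n+1)}{2}+n\beta(z),
\]
and dividing by $n$ produces the desired identity.

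The main obstacle is carefully setting up the type A2 $\leftrightarrow$ type C bijection and verifying that the winding numbers match on both sides; once that bookkeeping is in place the rest is a direct sum manipulation.
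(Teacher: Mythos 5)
Your proof is correct and follows essentially the same approach as the paper's: both compute $\beta(z)$ by extracting the winding numbers $k_i$ (which the paper writes as $h_i/n$ with $h_i = b_i - r_n(b_i)$), and both close the argument by observing that the $a_i$ together with the residues $r_n(b_i)$ enumerate $[n]$ exactly once, so $\sum_i (a_i + r_n(b_i)) = \tfrac{n(n+1)}{2}$. The paper organizes the same bookkeeping a bit more compactly by first splitting $\beta(z) = \sum_i \beta(t_{a_i,b_i})$ over the disjoint cycles and evaluating $\beta(t_{a_i,b_i}) = h_i/n$ in one step, whereas your four-type partition of $[n]$ and the A2$\leftrightarrow$C bijection spell out the same matching more explicitly.
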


\begin{proof}
We have 
 $z=t_{a_1,b_1}\cdots t_{a_l,b_l}$ and $\beta(z)=\beta(t_{a_1,b_1})+\cdots+\beta(t_{a_l,b_l})$. 
 Define $h_i = b_i - r_n(b_i)\geq 0$. Then 
 \[\beta(t_{a_i,b_i})=\tfrac{1}{2n}(|b_i-r_n(b_i)|+|t_{a_i,b_i}(r_n(b_i))-r_n(t_{a_i,b_i}(r_n(b_i)))|)=\tfrac{1}{2n}(h_i+|a_i-h_i-a_i|)=\tfrac{h_i}{n}.\]
So we have $\beta(z)=\frac{1}{n}\sum_{i=1}^l h_i=\frac{1}{n}\sum_{i=1}^l(a_i+b_i)-\frac{1}{n}\sum_{i=1}^l(a_i+r_n(b_i))=\frac{1}{n}\sum_{i=1}^l(a_i+b_i)-\frac{n+1}{2}$.
\end{proof}


\begin{lemma}\label{par-inv-lem}
If $z \in \cF_n$ and $w \in \tS_n$ then $\sfpf(wzw^{-1}) = \sfpf(z)$.
\end{lemma}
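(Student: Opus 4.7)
The plan is to proceed by induction on $\ell(w)$. The base case $\ell(w) = 0$ is trivial. When $w = w' s_k$ with $\ell(w') < \ell(w)$, one has $wzw^{-1} = w'(s_k z s_k)(w')^{-1}$; since $s_k z s_k \in \cF_n$ (conjugation preserves the FPF-involution property), applying the inductive hypothesis to $w'$ and $s_k z s_k$ reduces the problem to showing $\sfpf(s_k z s_k) = \sfpf(z)$ for every simple reflection $s_k$ and every $z \in \cF_n$, i.e., to showing $\beta(s_k z s_k) \equiv \beta(z) \pmod{2}$.

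To compare $\beta$-values I would track each cycle of $z$ under conjugation. If $(a_i, b_i)$ is the canonical representative (satisfying $a_i \in [n]$ and $a_i < b_i = z(a_i)$) of some cycle of $z$, then conjugation sends the associated orbit $\{a_i + jn, b_i + jn : j \in \ZZ\}$ to $\{s_k(a_i) + jn, s_k(b_i) + jn : j \in \ZZ\}$. I would re-canonicalize to find the new pair $(a_i', b_i')$ and compute the local change $\Delta_i := (a_i' + b_i') - (a_i + b_i)$. By Lemma~\ref{beta-inv-lem}, $\beta(s_k z s_k) - \beta(z) = \tfrac{1}{n}\sum_i \Delta_i$, so the task is to show this is an even integer.

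For bulk reflections $s_k$ with $k \in \{1, \ldots, n-1\}$, the swap $(k,k+1)$ lies entirely inside $[n]$, and a case analysis on whether $a_i$ or $r_n(b_i)$ belongs to $\{k,k+1\}$ gives per-cycle changes in $\{0, \pm 1\}$. Writing $\phi(i) := r_n(z(i))$, one finds that if $\phi(k) \neq k+1$ then the cycle through residue $k$ contributes $+1$ and the cycle through residue $k+1$ contributes $-1$ (regardless of which endpoint of each cycle is the canonical $a$), while if $\phi(k) = k+1$ the single cycle through both residues contributes $0$. Either way $\sum_i \Delta_i = 0$, so in fact $\beta$ is preserved exactly in this case.

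The main technical obstacle is the boundary reflection $s_n$, which crosses the seam between residues $n$ and $1$, so canonical representatives of affected cycles may shift by $\pm n$ and per-cycle contributions can be as large as $\pm 2n$. Only cycles through residues $1$ or $n$ are affected. When $\phi(n) \neq 1$, the distinct cycles through $n$ and $1$ contribute values $\equiv +1$ and $\equiv -1$ modulo $2n$ respectively (concretely, $+1$ or $1 - 2n$ for the first, depending on whether $z(n) < n$ or $z(n) > n$, and $-1$ or $2n-1$ for the second), so their sum lies in $\{0, \pm 2n\}$. When $\phi(n) = 1$, the unique cycle through both residues contributes a value in $\{0, \pm 2n\}$ directly (from a short sub-analysis on the winding parameter $h = (b_i - r_n(b_i))/n$). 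In either situation $\tfrac{1}{n}\sum_i \Delta_i \in \{0, \pm 2\}$, which is even, completing the induction.
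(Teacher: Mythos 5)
Your proof is correct and takes essentially the same approach as the paper's: reduce to a single simple reflection (the paper invokes the generating set directly; you package it as induction on $\ell(w)$), apply Lemma~\ref{beta-inv-lem} to turn $\beta$ into a sum over cycles, and track per-cycle changes in $\sum_i(a_i+b_i)$, showing the total shift is a multiple of $2n$. The only difference is organizational --- you split first by interior ($k<n$) versus boundary ($k=n$) generator and then by cycle type, observing that the interior case preserves $\beta$ exactly, whereas the paper's four cases are indexed by whether $i$ and $i+1$ are left- or right-endpoints of cycles with the boundary shift appearing as sub-cases --- but the underlying computations agree.
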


\begin{proof}
Since $\tS_n$ is generated by $s_1,\ldots,s_n$, it suffices to prove that $\sfpf(s_izs_i) = \sfpf(z)$ for all $i \in [n]$.  Note that we can write $z=t_{a_1,b_1}\cdots t_{a_l,b_l}$ with $a_i < b_i=z(a_i)$ defined as above. Let $y=s_izs_i$.
Define 
\[\Cyc(z)=\{(p,z(p)) : p<z(p)\text{ and }p\in[n]\}=\{(a_i,b_i) : 1\le i\le l\}.\] 
Write $\sim$ for the equivalence relation on $\ZZ\times \ZZ$ with $(x,y) \sim (x+n,y+n)$ and
let 
$\wCyc(z)=\Cyc(z)/\sim$.
Denote the equivalence class of $(p,z(p))$ in $\wCyc(z)$ as $\overline{(p,z(p))}$.

If $s_i=t_{a_j,b_j}$, then we have $y=z$ and $\sfpf(y)=\sfpf(z)$. 
If $y\neq z$, then we suppose $a'_1<a'_2<\dots <a'_l$ are the numbers $a' \in  [n]$ with $a' < y(a')$ and $b'_i=y(a'_i)$. There are four cases to consider:
\begin{itemize}

\item Suppose $a_j=i$ and $a_k\equiv i+1\modu{n})$. Then $\{\overline{(a_j+1,b_j)},\overline{(a_k-1,b_k)}\}\subset\wCyc(y)$. If $a_j+1\le n$ then 
$\{(a_j+1,b_j),(a_k-1,b_k)\}\subset \Cyc(y)$ while the other cycles remain unchanged. Therefore we have 
\be\label{abab-eq}
\sum_{i=1}^n(a'_i+b'_i)=\sum_{i=1}^n(a_i+b_i)
\ee
 and $\sfpf(y)=\sfpf(z)$. If $a_j=n$, then 
$\{(a_j+1-n,b_j-n),(a_k-1+n,b_k+n)\}\subset \Cyc(y)$ while the other cycles remain unchanged. Therefore
\eqref{abab-eq} holds again
 and $\sfpf(y)=\sfpf(z)$.

\item Suppose $a_j=i$ and $b_k\equiv i+1\modu{n})$. Then $\{\overline{(a_j+1,b_j)},\overline{(a_k,b_k-1)}\}\subset\wCyc(y).$ If $a_j+1\le n$ and $a_k=a_j$, then $\{(a_j+1,b_j-1)\}\subset\Cyc(y)$ while the other cycles remain unchanged. 
 If $a_j+1\le n$ and $a_k<a_j$, then 
$\{(a_j+1,b_j),(a_k,b_k-1)\}\subset\Cyc(y)$ while the other cycles remain unchanged. 
If $a_j=n$, finally, then 
$\{(a_j+1-n,b_j-n),(a_k,b_k-1)\}\subset\Cyc(y)$ while the other cycles remain unchanged. 
In each case
\eqref{abab-eq} holds again
and $\sfpf(y)=\sfpf(z)$.

\item Suppose $b_j\equiv i\modu{n})$ and $a_k\equiv i+1\modu{n})$. Then $\{\overline{(a_j,b_j+1)},\overline{(a_k-1,b_k)}\}\subset\wCyc(y).$ If $a_k-1>0$ then $\{(a_j,b_j+1),(a_k-1,b_k)\}\subset\Cyc(y)$ while the other cycles remain unchanged. 
Therefore 
\eqref{abab-eq} holds 
and $\sfpf(y)=\sfpf(z)$. 

If $a_k=1$, then 
$\{(a_j,b_j+1),(a_k-1+n,b_k+n)\}\subset\Cyc(y)$ while the other cycles remain unchanged. Therefore
we have $\sum_{i=1}^n(a'_i+b'_i)=\sum_{i=1}^n(a_i+b_i)+2n$ and
  $\sfpf(y)=\sfpf(z)$.

\item Suppose $b_j\equiv i\modu{n})$ and $b_k\equiv i+1\modu{n})$. Then $\{(a_j,b_j+1),(a_k,b_k-1)\}\subset\Cyc(y)$ while the other cycles remain unchanged. Therefore 
\eqref{abab-eq} holds and 
 $\sfpf(y)=\sfpf(z)$.
\end{itemize}
\end{proof}

Let $\tp =s_1s_3\cdots s_{n-1}= [2,1,4,3,\dots,n,n-1] \in \tI_n$ and 
$\Theta^-= s_2s_4\cdots s_n= [1,0,3,2,\dots,n-1,n-2]  \in \tI_n$,
so that $\sfpf(\Theta^\pm)=\pm 1$.
We reserve the symbol $\Theta$ to denote one of the two elements of $\{ \Theta^\pm\}$.
Define $\cFp$ as the $\tS_n$-conjugacy class of $\Theta^+$ and $\cFm$ as the $\tS_n$-conjugacy class of $\Theta^-$. One can show that
\be\label{+-eq} \cFp =\{z\in\cF_n : \sfpf(z)=1\} \qquand
 \cFm =\{z\in\cF_n : \sfpf(z)=-1\}\ee
and hence that $\cF_n=\cFp\sqcup\cFm$; see, e.g., \cite[Theorem 5.4]{Mar}.
For $z \in \cFp$, let $\cAfpf(z)$ be the set of minimal length elements in  $\{ w \in \tS_n : w^{-1}\Theta^+ w = z\}$.
Similarly, $z \in \cFm$, let $\cAfpf(z)$ be the set of minimal length elements in  $\{ w \in \tS_n : w^{-1}\Theta^-  w = z\}$.
We refer to elements of $\cAfpf(z)$ as \emph{FPF atoms} for $z$ .

\begin{definition}\label{aff-def}
The \emph{(affine) FPF-involution Stanley symmetric function} of $z \in \cF_n$ is
$$\Ffpf_z = \sum_{\pi \in \cAfpf(z)} \tF_\pi \in \Sym^{(n)}.$$
\end{definition}

This definition is natural to consider in view of \cite{Lam} and \cite{MZ}. 
The power series $\Ffpf_z$ for $z \in \cF_n \cap S_n$ were previously studied in \cite{HMP5}.

\begin{example}
If $n=2$ then every $z \in \cF_2$ has only one FPF-atom of length $k=\frac{\ell(z)-1}{2}$, so $\Ffpf_z = m_{1^k}$.
\end{example}

For $z \in \cF_n$ and $l=n/2$, again define $a_1<a_2<\dots <a_l$ to be the numbers $a \in  [n]$ with $a < z(a)$. Similarly, let $d_1<d_2<\dots <d_l$ be the numbers $d \in [n]$ with $z(d) < d$.
Set $b_i = z(a_i)$ and $c_i = z(d_i)$.
Define 
\begin{equation}\label{amin-def}
    \alpha^\fpf_{\min}(z)=[a_1,b_1,a_2, b_2,\dots,a_l,b_l]^{-1}
\qquand
    \alpha^\fpf_{\max}(z)=[c_1,d_1,c_2,d_2,\dots,c_l,d_l]^{-1}
    \end{equation}
and let $\ellfpf(z) = \tfrac{1}{2}(\ell(z) - \tfrac{n}{2})$.

\begin{theorem}\label{length-alphamin-thm}
If $z \in \cF_n$ then $\ellfpf(z)=\ell(w)$ for all $w\in\cAfpf(z)$, and $\left\{\alpha^\fpf_{\min}(z),\alpha^\fpf_{\max}(z)\right\}\subset \cAfpf(z)$.
\end{theorem}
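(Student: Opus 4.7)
My plan is to split the proof into three parts: a general lower bound on $\ell(w)$ for any $w$ with $w^{-1}\Theta^{\sgn(z)} w = z$, a verification that $\alpha^\fpf_{\min}(z)$ and $\alpha^\fpf_{\max}(z)$ are such conjugators, and a computation of the lengths of these two elements.

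For the lower bound, I apply the subword property of Coxeter length: since $z = w^{-1}\Theta^{\sgn(z)} w$ and $\ell(\Theta^{\pm}) = n/2$, the triangle inequality gives $\ell(z) \leq 2\ell(w) + n/2$, hence $\ell(w) \geq (\ell(z) - n/2)/2 = \ellfpf(z)$. Any conjugator of length exactly $\ellfpf(z)$ is therefore an atom of minimum length, so once one such atom is exhibited it will follow that every element of $\cAfpf(z)$ has length $\ellfpf(z)$. To check that $v := \alpha^\fpf_{\min}(z)^{-1}$ is a valid affine permutation conjugating $\Theta^{\sgn(z)}$ to $z$, first observe that the entries of the window $[a_1,b_1,\ldots,a_l,b_l]$ exhaust each residue class modulo $n$ exactly once, since $\{r_n(a_i)\}_i$ and $\{r_n(b_i)\}_i$ partition $[n]$ (as $z$ is FPF). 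By Lemma~\ref{beta-inv-lem} the window sum equals $n\beta(z) + n(n+1)/2$, identifying this as the window of $v$ starting at position $\beta(z)+1$, so $v(\beta(z)+2i-1)=a_i$ and $v(\beta(z)+2i)=b_i$. A parity check then shows that $\Theta^{\sgn(z)}$ pairs $\beta(z)+2i-1$ with $\beta(z)+2i$ in both cases (when $\beta(z)$ is even, $\Theta^+$ pairs consecutive $(2k-1,2k)$; when $\beta(z)$ is odd, $\Theta^-$ pairs consecutive $(2k,2k+1)$), so $v\Theta^{\sgn(z)} v^{-1}(a_i)=b_i$ and similarly $v\Theta^{\sgn(z)} v^{-1}(b_i)=a_i$, giving $v\Theta^{\sgn(z)} v^{-1} = z$. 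The same reasoning handles $\alpha^\fpf_{\max}(z)$ with $(c_i,d_i)$ in place of $(a_i,b_i)$.

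The hard part will be proving $\ell(\alpha^\fpf_{\min}(z)) = \ell(\alpha^\fpf_{\max}(z)) = \ellfpf(z)$, which combined with the lower bound will complete the proof. I expect to do this by a direct inversion count from the window: since the $a_i$'s are strictly increasing, the inversions of $v$ decompose into contributions between distinct cycles of $z$, and each contribution should match a pair of inversions of $z$ that is not already accounted for by $\Theta^{\sgn(z)}$, explaining the factor of $2$ and the correction $-n/2$ appearing in $\ellfpf$. If a direct count proves too delicate, an alternative is induction on $\ell(z)$: pick a suitable simple generator $s$ such that $szs$ is a shorter FPF involution (necessarily of the same sign by Lemma~\ref{par-inv-lem}), and show that $\alpha^\fpf_{\min}(z) = \alpha^\fpf_{\min}(szs) \cdot s$ in a length-additive way, reducing to the base case $z=\Theta^{\pm}$ where the window $[a_1,b_1,\ldots,a_l,b_l]$ is (up to a cyclic shift) the identity window, so $\alpha^\fpf_{\min}=\alpha^\fpf_{\max}=e$ and $\ellfpf = 0$.
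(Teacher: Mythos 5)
Your proposal follows the same overall structure as the paper's proof: (i) a lower bound $\ell(w)\geq\ellfpf(z)$ from the triangle inequality $\ell(z)=\ell(w^{-1}\Theta w)\leq 2\ell(w)+n/2$; (ii) verification that $\alpha^\fpf_{\min}(z)$ and $\alpha^\fpf_{\max}(z)$ conjugate the correct $\Theta^\pm$ to $z$; (iii) a length computation showing these two elements achieve the lower bound. Parts (i) and (ii) are essentially correct. Your window-sum argument is a slightly more explicit phrasing of what the paper does directly (the paper just asserts $\alpha(a_i)=2i+\beta(z)-1$, $\alpha(b_i)=2i+\beta(z)$; your identification of the window as starting at index $\beta(z)$ via Lemma~\ref{beta-inv-lem} is the justification for that formula), and the parity check that $\Theta^{\pm}$ pairs $\beta(z)+2i-1$ with $\beta(z)+2i$ matching the sign of $\sfpf(z)=(-1)^{\beta(z)}$ is right.

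The gap is in part (iii), which is the substantive content of the theorem and which you explicitly leave as a sketch ("I expect to do this by a direct inversion count\dots if a direct count proves too delicate, an alternative is induction\dots"). Neither plan is executed. The direct count is in fact tractable and is what the paper does: writing $\ell(\alpha)=\ell([a_1,b_1,\ldots,a_l,b_l])$ and computing the code entries, one finds
\[
\ell(\alpha)=\sum_{i=1}^l\bigl(\#\{j>i:a_j<b_i\}+\#\{j>i:b_j<b_i\}\bigr),
\]
where the would-be third contribution $\#\{j>i:b_j<a_i\}$ vanishes because $j>i$ implies $b_j>a_j>a_i$ (this is the key structural observation your sketch glosses over). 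Rewriting in terms of $z$ yields $\ell(\alpha)=\tfrac{1}{2}\#\{(i,j):i\in[n],\ i<j,\ z(i)>z(j)\neq i\}=\tfrac{1}{2}(\ell(z)-n/2)$, confirming your heuristic that each inversion of $\alpha$ corresponds to two inversions of $z$ beyond the $n/2$ forced by fixed-point-freeness. Your claim "each contribution should match a pair of inversions of $z$" is the right intuition, but as written it is an expectation, not a proof; without carrying out this count (or the proposed induction via $\alpha^\fpf_{\min}(z)=\alpha^\fpf_{\min}(s_izs_i)\cdot s_i$, which would also need a separate length-additivity argument), the argument is incomplete. The case of $\alpha^\fpf_{\max}(z)$ requires a parallel but not entirely symmetric count, which you also defer.
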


The elements $\alpha^\fpf_{\min}(z)$ and $\alpha^\fpf_{\max}(z)$ are 
denoted $\alpha_R(z)$ and $\alpha_L(z)$ in \cite[Theorem 5.12]{Mar}.
One can derive the theorem from results in \cite[\S5]{Mar}, but we give a direct, self-contained proof.

\begin{proof}
Let $\Theta \in \{\Theta^\pm\}$ be the element conjugate to $z$. It is obvious that 
$\ell(z)=\ell(w^{-1}\Theta w)\le\ell(w^{-1})+\ell(\Theta)+\ell(w)=2\ell(w)+\frac{n}{2}.$
Therefore, if $w\in\cAfpf(z)$, then $\ell(w)\ge\ellfpf(z)$.
Let $\alpha = \alpha^\fpf_{\min}(z)$. We prove that $\alpha^{-1}\Theta\alpha=z$.
For $i \in \ZZ$ define $a_i=a_{r_l(i)}+2(i-r_l(i))$ where $l=\frac{n}{2}$ and $b_i=z(a_i)$.
One can compute that $\alpha(a_i)=2i+\beta(z)-1$ and $\alpha(b_i)=2i+\beta(z)$. 
If $\sfpf(z)=1$ then
\[
\alpha^{-1}\Theta^+\alpha=t_{\alpha^{-1}(1),\alpha^{-1}(2)}\cdots t_{\alpha^{-1}(n-1),\alpha^{-1}(n)}
=t_{a_{1-{\beta(z)}/{2}},b_{1-{\beta(z)}/{2}}}\cdots t_{a_{l-{\beta(z)}/{2}},b_{l-{\beta(z)}/{2}}}
=t_{a_1,b_1}\cdots t_{a_l,b_l}=z.
\]
The result when $\sfpf(z)=-1$ is similar. Finally, we compute lengths. We have
\begin{align*}
    \ell(\alpha)&=\ell([a_1,b_1,\cdots,a_l,b_l])
    =\sum_{i=1}^{l}(\#\{j>i:a_j<b_i\}+\#\{j>i:b_j<b_i\})\\
    &=\#\{(a_i,b_j):a_i\in[n],a_i<b_j,b_i>a_j\}+\#\{(b_j,b_i):b_j\in[n],b_j<b_i,a_i<a_j\}\\
    &=\tfrac{1}{2}\#\{(i,j):i\in[n],i<j,z(i)>z(j)\neq i\}
    =\tfrac{1}{2}(\ell(z)-\tfrac{n}{2}).
\end{align*}
Therefore we have $\alpha\in\cAfpf(z)$ and so it follows that all $w \in \cAfpf(z)$ must have $\ell(w) =\ellfpf(z)$. It follows similarly that $\alpha^\fpf_{\max}(z)\in\cAfpf(z)$.
\end{proof}
\begin{example}\label{aff-ex1}
Suppose $n=4$ and
\[z= 
\begin{tikzpicture}[baseline=0,scale=0.15,label/.style={postaction={ decorate,transform shape,decoration={ markings, mark=at position .5 with \node #1;}}}]
{
\draw[fill,lightgray] (0,0) circle (4.0);
\node at (2.4492935982947064e-16, 4.0) {$_\bullet$};
\node at (1.8369701987210297e-16, 3.0) {$_{1}$};
\node at (4.0, 0.0) {$_\bullet$};
\node at (3.0, 0.0) {$_{2}$};
\node at (2.4492935982947064e-16, -4.0) {$_\bullet$};
\node at (1.8369701987210297e-16, -3.0) {$_{3}$};
\node at (-4.0, -4.898587196589413e-16) {$_\bullet$};
\node at (-3.0, -3.6739403974420594e-16) {$_{4}$};
\draw [-,>=latex,domain=0:100,samples=100] plot ({(4.0 + 2.0 * sin(180 * (0.5 + asin(-0.9 + 1.8 * (\x / 100)) / asin(0.9) / 2))) * cos(90 - (180.0 + \x * 4.5))}, {(4.0 + 2.0 * sin(180 * (0.5 + asin(-0.9 + 1.8 * (\x / 100)) / asin(0.9) / 2))) * sin(90 - (180.0 + \x * 4.5))});
\draw [-,>=latex,domain=0:100,samples=100,densely dotted] plot ({(4.0 + 2.0 * sin(180 * (0.5 + asin(-0.9 + 1.8 * (\x / 100)) / asin(0.9) / 2))) * cos(90 - (\x * 4.5))}, {(4.0 + 2.0 * sin(180 * (0.5 + asin(-0.9 + 1.8 * (\x / 100)) / asin(0.9) / 2))) * sin(90 - (\x * 4.5))});}
\end{tikzpicture}
= t_{1,6}t_{3,8} = [6,-3,8,-1] \in \cF_4.
\]
We have $\beta(z)=2$, so $z\in\cFp$.
The elements of $\cAfpf(z)$ are
\[\ba \alpha^\fpf_{\min}(z) &= [1,6,3,8]^{-1} =[3,0,5,2] =[-3,2,-1,4]^{-1}=\alpha^\fpf_{\max}(z).
\ea\]
The permutation $[3,0,5,2] = s_4s_2s_3s_1$ has a single reduced expression,
and it holds that $\tF_{[3,0,5,2]} = 4m_{1111}+2m_{211}+m_{22}$.
Therefore $\Ffpf_z = \Ffpf_{[6,-3,8,-1]} = 4m_{1111}+2m_{211}+m_{22}.$
\end{example}

Next, we discuss some length formulas.

\begin{lemma}\label{t-length-lem}
For $i<j\not\equiv i\modu{n})$ and $w \in \tS_n$, it holds that $$\ell(wt_{ij})=\begin{cases}\ell(w)+2\delta(w,i,j)+1&\text{ if }w(i)<w(j)\\\ell(w)-2\delta(wt_{ij},i,j)-1&\text{ if }w(i)>w(j),\end{cases}$$ where 
$\delta(w,i,j)=|\{k \in \ZZ : i<k<j\text{ and }k\not\equiv i\modu{n})\text{ and }w(k)\text{ is between }w(i)\text{ and }w(j)\}|.$
\end{lemma}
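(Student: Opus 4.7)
The plan is to prove the first case (when $w(i) < w(j)$) by direct inversion counting; the second case ($w(i) > w(j)$) then follows by replacing $w$ with $wt_{ij}$ in the first case and using $t_{ij}^2 = 1$.

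So assume $w(i) < w(j)$, set $v = wt_{ij}$, and aim to show $\ell(v) - \ell(w) = 2\delta(w, i, j) + 1$. Using the description $\ell(w) = |\{(a, b) : a \in [n],\ a < b,\ w(a) > w(b)\}|$ (where every $\sim$-class of inversions has a unique representative with $a \in [n]$), I would compare inversion status pair by pair. Since $v$ and $w$ agree outside the set $I := \{i, j\} + n\ZZ$, and $v(i+pn) = w(j) + pn$, $v(j+pn) = w(i) + pn$ for all $p \in \ZZ$, only pairs $(a,b)$ meeting $I$ can have different inversion status in $w$ and $v$.

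Without loss of generality assume $i \in [n]$, and let $j_0 \in [n]$ with $j = j_0 + Mn$ for some $M \geq 0$. Partition the relevant canonical pairs $(a, b)$ according to whether $a \in \{i, j_0\}$ or not, and by the residue of $b$ modulo $n$. In each subcase, a direct computation using the hypothesis $w(i) < w(j)$ determines which pairs are inversions of $v$ but not of $w$ (gains), which are the reverse (losses), and which are unchanged. The pair class represented by $(i, j)$ itself accounts for the ``$+1$'' in the formula: it is not an inversion of $w$, yet $v(i) = w(j) > w(i) = v(j)$ makes it an inversion of $v$.

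The main obstacle is the case analysis for pairs inside $I \times I$ (beyond $(i,j)$) and for pairs between $I$ and its complement. In the affine setting, the interval $(i, j)$ can contain integers $k$ with $k \equiv j \modu n)$ whenever $j - i > n$; the corresponding contributions to the change in inversion count appear through classes represented by $(i, j + pn)$ with $p < 0$, rather than through the finite-type mechanism of matched pairs $(i, k)$ and $(k, j)$. The technical heart of the proof is verifying that all these subcase contributions, together with possible losses coming from pairs of the form $(j_0, i+pn)$, combine to give exactly $2\delta(w, i, j)$. This is also the structural reason why the definition of $\delta$ excludes $k \equiv i \modu n)$ (those pairs always cancel) while still counting $k \equiv j \modu n)$.
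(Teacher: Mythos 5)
Your overall strategy matches the paper's: prove the $w(i)<w(j)$ case by direct inversion counting over $\sim$-classes of pairs, observing that only pairs meeting $\{i,j\}+n\ZZ$ can change status, and deduce the $w(i)>w(j)$ case by applying the first case to $wt_{ij}$. The identification of the ``$+1$'' with the class of $(i,j)$ is also the same.

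However, the proposal leaves the decisive step undone. You write that ``a direct computation... determines which pairs are inversions of $v$ but not of $w$'' and that ``the technical heart of the proof is verifying that all these subcase contributions... combine to give exactly $2\delta(w,i,j)$,'' but you never actually exhibit that accounting. This is where all the work lies. The paper's proof enumerates the contributing pairs into eight explicit cases (indexed by the residue of $k$ modulo $n$, the position of $k$ relative to $i$ and $j$, and the position of $w(k)$ relative to $w(i)$ and $w(j)$), identifies which are gains and which are losses, and then exhibits three bijections: cases with $k\equiv j$ and $k<i$ or $w(k)<w(i)$ match cases with $k\equiv i$ and $j<k$ or $w(j)<w(k)$ via $k\mapsto i+j-k$, and the gains with $k\equiv j$, $i<k<j$, $w(i)<w(k)<w(j)$ match the gains with $k\equiv i$, $i<k<j$, $w(i)<w(k)<w(j)$. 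Only after this matching does the $2\delta$ emerge. Nothing in your sketch establishes these cancellations, so the proof is incomplete.

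There is also a small misdiagnosis in your structural remark. You say $\delta$ excludes $k\equiv i\ (\mathrm{mod}\ n)$ because ``those pairs always cancel.'' That is not quite what happens: the $k\equiv i$ contributions do not cancel among themselves; rather, the gains with $k\equiv i$, $i<k<j$ pair off against equally many gains with $k\equiv j$, $i<k<j$ (so one can count the latter twice and drop the former, which is exactly what $\delta$ does), while the remaining $k\equiv i$ gains and losses cancel against $k\equiv j$ gains and losses. So the exclusion of $k\equiv i$ from $\delta$ is a bookkeeping choice made possible by a bijection with the $k\equiv j$ side, not an internal cancellation on the $k\equiv i$ side. Getting this right is exactly the content you would need to supply.
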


We expect that this lemma may be known. We include a proof since we have not found a good reference.

\begin{proof}
Assume $w(i)<w(j)$. We count the pairs $(p,q)\in\ZZ\times\ZZ$ with $p\not\equiv q\modu{n})$ 
that are in the inversion set of $w$ and $wt_{ij}$. For pairs $(p,q)$ with $\{p,q\}\cap\{i,j\}+n\ZZ=\varnothing$, we have $w(p)<w(q)$ if and only if $wt_{ij}(p)<wt_{ij}(q)$ and $w(p)>w(q)$ if and only if $wt_{ij}(p)>wt_{ij}(q)$. So either $(p,q)$ is an inversion of both $w$ and $wt_{ij}$ or it is not an inversion of either $w$ or $wt_{ij}$. We only need to consider pairs of the form $(i,k),(j,k),(k,i),(k,j)$. Only the following cases contribute to the difference of $\ell(wt_{ij})$ and $\ell(w)$:
\begin{enumerate}

\item[1.] For $k\not\in\{i,j\}+n\ZZ$ with $i<k<j$ and $w(i)<w(k)<w(j)$, then $(i,k),(k,j) \in \Inv(wt_{ij})\setminus\Inv(w)$.
\item[2.] For $k\equiv j\modu{n})$, if $k<i<j$ and $w(i)<w(k)<w(j)$, then $(k,i) \in \Inv(wt_{ij})\setminus\Inv(w)$.
\item[3.] For $k\equiv j\modu{n})$, if $i<k<j$ and $w(k)<w(i)<w(j)$, then $(i,k)\in \Inv(w)\setminus\Inv(wt_{ij})$.
\item[4.] For $k\equiv j\modu{n})$ with $i<k<j$ and $w(i)<w(k)<w(j)$, we have $(i,k) \in \Inv(wt_{ij})\setminus\Inv(w)$.
\item[5.] For $k\equiv i\modu{n})$, if $i<j<k$ and $w(i)<w(k)<w(j)$, then $(j,k) \in \Inv(wt_{ij})\setminus\Inv(w)$.
\item[6.] For $k\equiv i\modu{n})$, if $i<k<j$ and $w(i)<w(j)<w(k)$, then $(k,j) \in \Inv(w)\setminus\Inv(wt_{ij})$.
\item[7.] For $k\equiv i\modu{n})$ with $i<k<j$ and $w(i)<w(k)<w(j)$, we have $(k,j) \in \Inv(wt_{ij})\setminus\Inv(w)$.
\item[8.] For pair $(i,j)$, we see that $(i,j) \in \Inv(wt_{ij})\setminus\Inv(w)$.
\end{enumerate}
Each of these cases contributes $1$ to the difference $\ell(wt_{ij})-\ell(w)$ except that case 1 contributes $2$. However, we find that cases 2 and 3 are in one-to-one correspondence with cases 5 and 6 through the map $k\mapsto k'=i+j-k$. Also, the pairs $(i,k)$ in case 4 are in one-to-one correspondence with the pairs $(w(k),j)$ in case 7. Thus, we have $\ell(wt_{ij})=\ell(w)+2\delta(w,i,j)+1$.

If $w(i)>w(j)$, then 
the result just shown implies that $\ell(w)=\ell(wt_{ij}t_{ij})=\ell(wt_{ij})+2\delta(wt_{ij},i,j)+1$.
\end{proof}

\begin{theorem}\label{atom-thm}
Fix an affine FPF-involution $z\in\cF_n$.
Then the set $\cAfpf(z)$ is a bounded poset, with unique minimum $\alpha^\fpf_{\min}(z)$ and unique maximum $\alpha^\fpf_{\max}(z)$, under the transitive relation on $\tS_n$ generated by
\[ [\cdots, a, d,b,c,\cdots]^{-1} \prec [\cdots,b,c,a,d,\cdots]^{-1}\]
if $a<b<c<d$ and the parts of each window masked by $\cdots$ are the same on either side of the relation.
\end{theorem}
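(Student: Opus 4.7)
The plan is in four steps: (i) verify the relation preserves $\cAfpf(z)$; (ii) establish acyclicity via a monovariant; (iii) check $\alpha^\fpf_{\min}(z)$ and $\alpha^\fpf_{\max}(z)$ have no predecessors and no successors respectively; (iv) prove connectivity, which is the main obstacle.

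For step (i), suppose $u \in \cAfpf(z)$ with $u^{-1} = [\ldots, a, d, b, c, \ldots]$ at four consecutive pair-aligned positions $2i-1,\ldots,2i+2$, and let $v$ be defined by $v^{-1} = [\ldots, b, c, a, d, \ldots]$. For $n \ge 4$ (the case $n=2$ is trivial), these four positions lie within a single $n$-period, so the values $\{a,b,c,d\}$ have distinct residues mod $n$; hence $t_{a,d}$ and $t_{b,c}$ have disjoint supports in $\tS_n$ and commute. From the factorization $u^{-1}\Theta^\pm u = \prod_j t_{u^{-1}(2j-1), u^{-1}(2j)}$, swapping these two adjacent commuting factors yields $v^{-1}\Theta^\pm v = z$. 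A direct inversion count shows that both arrangements $(a,d,b,c)$ and $(b,c,a,d)$ contain exactly two internal inversions among the four positions, and inversions pairing one of these positions with an outside position depend only on the unchanged multiset $\{a,b,c,d\}$. Hence $\ell(u) = \ell(v) = \ellfpf(z)$ and $v \in \cAfpf(z)$.

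For (ii) and (iii), define the monovariant $Q(w) = \sum_i i \cdot q_i(w)$, where $q_i(w)$ is the second entry of the $i$-th pair of $w^{-1}$ in a fixed window. A forward swap changes $Q$ by $\Delta Q = i(c-d) + (i+1)(d-c) = d - c > 0$, so the transitive closure of $\prec$ on the finite set $\cAfpf(z)$ is a bounded partial order. For (iii), the sorted order $a_1 < \cdots < a_l$ in $(\alpha^\fpf_{\min})^{-1} = [a_1,b_1,\ldots,a_l,b_l]$ blocks any reverse swap, which would force some $a_{i+1} < a_i$; the dual argument using $d_1 < \cdots < d_l$ in $(\alpha^\fpf_{\max})^{-1} = [c_1,d_1,\ldots,c_l,d_l]$ blocks forward swaps on $\alpha^\fpf_{\max}$.

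Step (iv), the principal obstacle, is connectivity: I must show every atom $w \ne \alpha^\fpf_{\min}$ has a predecessor (and dually for $\alpha^\fpf_{\max}$). The key structural claim is that for any atom $w$ with pair sequence $(p_1,q_1),\ldots,(p_l,q_l)$ (with $p_i<q_i$) in a fixed window, two adjacent pairs cannot occur in the ``disjoint reversed'' configuration $p_{i+1}<q_{i+1}<p_i<q_i$ or the ``crossing reversed'' configuration $p_{i+1}<p_i<q_{i+1}<q_i$. To prove this, note that pair-swapping the two pairs produces a permutation $w'$ in the same conjugacy class (the transpositions commute, as above) with $\ell(w') = \ell(w) - 4$ or $\ell(w) - 2$ respectively by direct inversion counting, contradicting $\ell(w) = \ellfpf(z)$. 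Granted the claim, any adjacency with $p_i > p_{i+1}$ in an atom must be nested with inner pair first, which is exactly the configuration to which the reverse relation applies. If $w \ne \alpha^\fpf_{\min}$, then in the window where $\alpha^\fpf_{\min}$ has sorted pair sequence, $w$'s pair sequence must fail to be sorted, so some adjacency is nested-inner-first, giving the required predecessor. Iterating terminates at $\alpha^\fpf_{\min}$; the symmetric argument gives chains to $\alpha^\fpf_{\max}$. The length comparison underlying the structural claim, which can be carried out via Lemma~\ref{t-length-lem} or direct inversion counting in the affine setting, is the principal technical work.
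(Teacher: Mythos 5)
Your overall strategy matches the paper's: (i) show the pair-aligned swap preserves $\cAfpf(z)$ via commutation of the disjoint transpositions $t_{a,d}$ and $t_{b,c}$ together with a length count, (ii) show $\alpha^\fpf_{\min}$ and $\alpha^\fpf_{\max}$ have no predecessor/successor, and (iii) show every other atom has one because the ``disjoint reversed'' and ``crossing reversed'' pair configurations would contradict $\ell(w)=\ellfpf(z)$. Items (i)--(iii) are exactly what the paper proves (the paper computes $\ell(v)=\ell(wt_{a,b}t_{c,d})=\ell(w)$ via Lemma~\ref{t-length-lem} for (i), and the case analysis on $a_i \lessgtr a_j$, $b_i \lessgtr b_j$ for adjacent window pairs for (ii)--(iii)), so on these points you are reproducing the published argument.

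The one genuinely new ingredient you propose, the acyclicity monovariant $Q(w)=\sum_i i\,q_i(w)$, has a gap as stated. The quantity $Q$ requires a choice of window, and the relation $\prec$ can act on four consecutive positions that wrap around the window boundary (positions $n-1,n,n+1,n+2$, say). In that case a forward swap does not simply exchange two adjacent second entries but replaces $q_1(w),q_l(w)$ by $q_l(w)-n, q_1(w)+n$ respectively, giving $\Delta Q=(l-1)(q_1-q_l+n)$, which is $\leq 0$ whenever $q_1+n<q_l$ (this inequality can occur: take $z=t_{1,2}t_{3,100}\in\cF_4$, where the forward swap at positions $3,4,5,6$ sends $\alpha^\fpf_{\min}=[1,2,3,100]^{-1}$ to $[-1,96,5,6]^{-1}$ with $Q$ dropping from $202$ to $108$). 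You would need to patch this, e.g., by replacing $Q$ with a window-independent statistic such as the number of nested-inner-first adjacencies modulo periodicity, or by noting (as the paper's subsequent lattice proposition makes precise) that the pair-ordering map identifies $\prec$ on $\cAfpf(z)$ with the covering relation in an interval of the right weak order on $\tS_{n/2}$, which is acyclic. I also note that the paper itself does not discuss acyclicity explicitly, so your instinct to supply it is sound — the specific monovariant just needs repair.
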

This result is an affine generalization of \cite[Theorem 6.22]{HMP2}.

\begin{proof}
We again define $a_i < b_i = z(a_i)$ as in Lemma~\ref{beta-inv-lem}. 
Let $\Theta \in \{\Theta^\pm\}$ be the element conjugate to $z$.
First, we show that $\alpha^\fpf_{\min}(z)$ is the unique minimum of $\cAfpf(z)$. This element is minimal under $\prec$ since we cannot express
$[a_1,b_1,a_2,b_2,\cdots,a_l,b_l]^{-1}$ in the form $[\cdots,b,c,a,d,\cdots]^{-1}$ for $a<b<c<d$. 
Suppose $w \in \cAfpf(z)$ is also minimal under $\prec$. Since $w^{-1}\Theta w=z$, we must have $w^{-1}(a_i)=w^{-1}(b_i)\pm1$. Since $w$ is an FPF atom, we must have $w^{-1}(a_i)=w^{-1}(b_i)-1$. Therefore $w=[\cdots,a_i,b_i,a_j,b_j,\cdots]^{-1}$. Since $w$ is a minimal element under our relation, we cannot have $a_i>a_j$ and $b_i<b_j$. Since $w$ is an FPF atom, we also cannot have $a_i>a_j$ and $b_i>b_j$, since then the element $w'=wt_{w^{-1}(a_i),w^{-1}(a_j)}t_{w^{-1}(b_i),w^{-1}(b_j)}$
would have $w'^{-1}\Theta w'=z$ and $\ell(w')<\ell(w)$.
So we must have $a_i<a_j$ for all such $i,j$ and we get $w=\alpha^\fpf_{\min}(z)$. The argument to show that $\alpha^\fpf_{\max}(z)$ is the unique maximum of $\cAfpf(z)$ is similar.

Next we show that if $\alpha^\fpf_{\min}(z)\prec w\prec\alpha^\fpf_{\max}(z)$, then $w\in\cAfpf(z)$. We prove this inductively.
If $\alpha^\fpf_{\min}(z) =[\cdots, a,d,b,c,\cdots]^{-1}$ or $\alpha^\fpf_{\max}(z) =[\cdots,b,c,a,d,\cdots]^{-1}$ where $a<b<c<d$, then we have $z(a)=d$ and $z(b)=c$ by definition. 
We show that if $[\cdots,a,d,b,c,\cdots]^{-1}=w\prec v=[\cdots,b,c,a,d,\cdots]^{-1}$ with $z(a)=d$ and $z(b)=c$ and $a<b<c<d$, then $w\in\cAfpf(z)$ if and only if $v\in\cAfpf(z)$.

First, we show that in this case $w^{-1}\Theta w=z$ if and only if $v^{-1}\Theta v=z$. If $w^{-1}\Theta w=z$ and $w(d) = i$, then we have $z(a)=w^{-1}\Theta w(a)=w^{-1}\Theta(i-1)=w^{-1}(\Theta(i-1))=d$, so we have $\Theta(i-1)=i$. This implies that $v^{-1}\Theta v(a)=v^{-1}\Theta(i+1)=v^{-1}(i+2)=d$ and similarly $v^{-1}\Theta v(b)=c$. Thus $v^{-1}\Theta v=z$. The converse implication follows similarly.
Second, we show that the length remains unchanged under the relation $\prec$. 
This follows by Lemma~\ref{t-length-lem}, which implies that
$\ell(v)=\ell(wt_{a,b}t_{c,d})=\ell(wt_{a,b})-1=\ell(w).$
\end{proof}

\begin{lemma}\label{inv-set-lem}
A subset $I$ of $\ZZ\times\ZZ$ is the inversion set of an affine permutation $w\in\tS_n$ if and only if the following conditions hold:
\begin{enumerate}
\item[(1)] For all $(i,j)\in I$, we have $i<j\not\equiv i\modu{n})$.
\item[(2)] If $(i,j)\in I$, then $(i+n,j+n)\in I$ and $(i-n,j-n)\in I$.
\item[(3)] If $\{(i,j),(j,k)\}\subset I$, then $(i,k)\in I$.
\item[(4)] If $(i,k)\in I$, then for every $i<j<k$, either $(i,j)\in I$ or $(j,k)\in I$.
\end{enumerate}
Moreover, if $I \subset \ZZ\times \ZZ$ satisfies these conditions then $I = \Inv(w)$ for exactly one $w \in \tS_n$.
\end{lemma}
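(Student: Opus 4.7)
The plan is to treat the two directions separately. For the forward direction, given $w \in \tS_n$, each of (1)--(4) follows easily from the definition of $\Inv(w)$: condition (1) uses that $j = i + kn$ with $k > 0$ forces $w(j) = w(i) + kn > w(i)$; (2) is immediate from $w(i+n)=w(i)+n$; (3) is transitivity of $>$ applied to $w(i) > w(j) > w(k)$; and (4) uses injectivity of $w$, since if $w(i) > w(k)$ and $i < j < k$, then either $w(j) < w(i)$, giving $(i,j) \in \Inv(w)$, or else $w(j) > w(i) > w(k)$, giving $(j,k) \in \Inv(w)$.

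For the backward direction, given $I \subset \ZZ \times \ZZ$ satisfying (1)--(4), I would define a binary relation $\prec_I$ on $\ZZ$ by declaring $i \prec_I j$ whenever either ($i < j$ and $(i,j) \notin I$) or ($i > j$ and $(j,i) \in I$). Trichotomy is immediate from the definition, and the key technical step is to verify transitivity of $\prec_I$, which is a six-case analysis according to the relative order of $i, j, k$ under the usual $<$: condition (3) handles the subcases in which one would form a chain lying entirely inside $I$ (or entirely inside its complement), while condition (4) rules out the remaining configurations. Condition (2) then gives the shift invariance $i \prec_I j \iff i+n \prec_I j+n$, and condition (1) guarantees that within each residue class modulo $n$ the relation $\prec_I$ agrees with $<$.

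A shift-invariant total order on $\ZZ$ of order type $\ZZ$ admits a unique order isomorphism $w \colon (\ZZ,<) \to (\ZZ, \prec_I)$ satisfying $w(i+n) = w(i) + n$, up to a global shift by a multiple of $n$, and the normalization $\sum_{i=1}^n w(i) = 1 + 2 + \cdots + n$ then pins down a unique $w \in \tS_n$. Verifying $\Inv(w) = I$ becomes a tautology, since $(i,j) \in \Inv(w)$ iff $i < j$ and $j \prec_I i$, which by construction is equivalent to $(i,j) \in I$. The main obstacle will be the transitivity check, where both (3) and (4) are used in essentially different subcases. A secondary subtlety is confirming that $(\ZZ, \prec_I)$ really has order type $\ZZ$ (rather than, e.g., $\ZZ + \ZZ$), which is forced by the finiteness of $|I/{\sim}|$ implicit in the claim that $I = \Inv(w)$ for some $w \in \tS_n$. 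An alternative and likely cleaner route would be to induct on $|I/{\sim}|$: condition (4), iteratively applied to any $(a,b)\in I$, forces the existence of a ``simple'' pair $(i,i+1)\in I$, and then one appeals to the standard description of how inversion sets transform under right multiplication by $s_i$ to reduce to a strictly smaller inversion set.
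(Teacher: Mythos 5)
Your forward direction matches the paper's. For the converse, your primary route --- building a shift-invariant total order $\prec_I$ on $\ZZ$ from $I$, checking transitivity by a six-way case analysis on the $<$-order of $i,j,k$ (with (3) handling the cases where a chain lies inside $I$ or its complement and (4) the mixed ones), and recovering $w$ as the normalized order isomorphism between $(\ZZ,\prec_I)$ and $(\ZZ,<)$ --- differs genuinely from the paper's, which instead inducts on $|I/\sim|$: it uses (4) to locate a simple pair $(i,i+1)\in I$, passes to $J=\{(s_i(a),s_i(b)):(a,b)\in I,\ (a,b)\not\sim(i,i+1)\}$, verifies (1)--(4) persist, and right-multiplies the inductive $v$ by $s_i$. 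This is exactly your proposed ``alternative and likely cleaner route.'' Your order-theoretic route makes the uniqueness of $w$ transparent and explains conceptually why (3) and (4) are the right axioms (transitivity and comparability of a reordering of $\ZZ$); the paper's induction is more elementary and constructive. One small slip: the isomorphism should run $(\ZZ,\prec_I)\to(\ZZ,<)$, not $(\ZZ,<)\to(\ZZ,\prec_I)$, so that $w(i)<w(j)\iff i\prec_I j$ and your final equivalence $(i,j)\in\Inv(w)\iff i<j \text{ and } j\prec_I i\iff(i,j)\in I$ actually reads correctly.

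The substantive issue is the finiteness of $|I/\sim|$, which you flag but dispatch with circular reasoning: saying it is ``implicit in the claim that $I=\Inv(w)$'' assumes the very conclusion you are trying to reach. Conditions (1)--(4) do not by themselves imply finiteness. For $n=2$, take $I=\{(i,j):i<j,\ i\text{ odd},\ j\text{ even}\}$; one checks all four conditions hold (condition (3) is vacuous, and in (4), for $(i,k)\in I$ and $i<j<k$, if $j$ is even then $(i,j)\in I$, and if $j$ is odd then $(j,k)\in I$), yet $|I/\sim|=\infty$ and $(\ZZ,\prec_I)$ has order type $\ZZ+\ZZ$ because every even integer $\prec_I$-precedes every odd one, so no $w\in\tS_n$ exists. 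You should be aware that the paper's own proof carries the same unstated hypothesis --- its induction on $|I/\sim|$ is only valid when that quantity is finite --- and the lemma is evidently meant to be read with the tacit assumption $|I/\sim|<\infty$ (the one set $\Inv^\fpf(u)$ to which it is applied is finite modulo $\sim$). Granting that hypothesis, one can show every $\prec_I$-interval is finite (only finitely many $k>j$ satisfy $(j,k)\in I$, and similarly on the left), so the order type is $\ZZ$; a density count forces the shift period of the order isomorphism to be exactly $n$; and your construction then produces the unique $w$.
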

\begin{proof}
We first prove that every inversion set $\Inv(w)$ for $w \in \tS_n$ satisfies these conditions. The first two conditions obviously hold. If $\{(i,j),(j,k)\}\subset\Inv(w)$, then $w(i)>w(j)>w(k)$ and $(i,k)\in\Inv(w)$. If $(i,k)\in\Inv(w)$, then for every $i<j<k$, either $w(k)<w(i)<w(j)$, $w(j)<w(k)<w(i)$ or $w(k)<w(j)<w(i)$, which leads to the required result.

Conversely, suppose $I$ satisfies these conditions. We prove that $I = \inv(w)$ for a unique $w \in \tS_n$ by induction on the size of 
the quotient $I/ \sim$, where $\sim$ is the equivalence relation on $\ZZ\times \ZZ$ generated by $(i,j)\sim(i+n,j+n)$. If $I$ is empty then $I = \inv(1)$. If $I$ is nonempty then we claim that $I$ contains some pair of form $(i,i+1)$. If $I$ contains no pair of form $(i,i+1)$, then it contains no pair of form $(i,i+2)$, by condition (4). Recursively, $I$ then contains no pair of form $(i,i+k)$ for any $k\in\ZZ$, so is empty.

Suppose $(i,i+1)\in I$ and define $J = \{ (s_i(a), s_i(b)) : (a,b) \in I,\ (a,b) \notin (i,i+1) +n(\ZZ,\ZZ)\}$. Then $|J/\sim|=|I/\sim|-1$. We show that $J$ satisfies the given conditions. Obviously $J$ satisfies conditions (1) and (2). For condition (3), note that if $(s_i(a),s_i(b)),(s_i(b),s_i(c))\in J$, then $(a,b),(b,c)\in I$ and $(a,c)\in I$. Since $a<b<c$, we cannot have $(a,c)\in(i,i+1)+n\ZZ$. Hence $(s_i(a),s_i(c))\in J$. For condition (4), note that if $(s_i(a),s_i(c))\in J$, then $(a,c)\in I$ and for every $a<b<c$, either $(a,b)\in I$ or $(b,c)\in I$. As a result, for $s_i(a)<x<s_i(c)$, we have $s_i(x)=b$ for some $a<b<c$ unless $(s_i(a),x)\in(i,i+1)+n\ZZ$ or $(x,s_i(b))\in(i,i+1)+n\ZZ$. In the first case, $(a,c)\sim(i+1,c')\in I$, so $(i,c')\in I$ and $(x,c)\in J$. In the second case, $(a,c)\sim(a',i)\in I$, so $(a',i+1)\in I$ and $(a,x)\in J$. Therefore, $J$ satisfies our four conditions so by induction hypothesis, we have $J = \inv(v)$ for a unique $v\in\tS_n$. But it is easy to see that $J = \inv(v)$ if and only if $I =\inv(w)$ for the element $w=vs_i$.
Since $v$ is unique, $w$ must also be unique.
%
%
%
\end{proof}

\begin{proposition}
For an affine FPF-involution $z\in\cF_n$, the poset $(\cAfpf(z),\prec)$ is a lattice.
\end{proposition}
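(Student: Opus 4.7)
The plan is to establish the existence of meets in $(\cAfpf(z), \prec)$; since Theorem~\ref{atom-thm} provides a unique maximum $\alpha^\fpf_{\max}(z)$ and $\cAfpf(z)$ is finite, any meet-semilattice structure then upgrades to a lattice (joins are recovered as meets of sets of common upper bounds).

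To each $w \in \cAfpf(z)$ I would associate the \emph{nesting invariant} $\mathsf{N}(w)$ consisting of ordered pairs $(C, C')$ of cycles of $z$ (chosen in a fundamental domain for the shift $C \mapsto C + n$) such that $C$ is strictly nested outside $C'$ and $C$ precedes $C'$ in the window reading of $w^{-1}$. The explicit description of covers in Theorem~\ref{atom-thm} shows that each cover $[\cdots, a, d, b, c, \cdots]^{-1} \prec [\cdots, b, c, a, d, \cdots]^{-1}$ with $a < b < c < d$ removes exactly the pair $(\{a,d\}, \{b,c\})$ from $\mathsf{N}$, so $u \prec^* v$ implies $\mathsf{N}(u) \supsetneq \mathsf{N}(v)$. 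The map $\mathsf{N}$ is moreover injective on $\cAfpf(z)$, because the relative order of two adjacent non-nested cycle blocks in an atom's window is rigid: swapping such blocks would change the length of $w$ by Lemma~\ref{t-length-lem}, contradicting $\ell(w) = \ellfpf(z)$ from Theorem~\ref{length-alphamin-thm}.

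For $u, v \in \cAfpf(z)$, I would define the candidate meet as the atom whose nesting invariant is the transitive closure $\overline{\mathsf{N}(u) \cup \mathsf{N}(v)}$, obtained by adjoining $(X, Z)$ whenever $(X, Y)$ and $(Y, Z)$ are already present and $X$ is nested outside $Z$. To produce this atom concretely, I would invoke Lemma~\ref{inv-set-lem}: assemble a candidate subset $I \subset \ZZ \times \ZZ$ combining the ``universal'' inversions shared by all atoms of $z$ with further inversions dictated by the prescribed closed nesting pattern, verify the four conditions of the lemma, and extract the unique $w \in \tS_n$ with $\Inv(w) = I$. One then checks $w \in \cAfpf(z)$ and that $\mathsf{N}(w) = \overline{\mathsf{N}(u) \cup \mathsf{N}(v)}$. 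Optimality follows because any common lower bound $w'$ satisfies $\mathsf{N}(w') \supseteq \mathsf{N}(u) \cup \mathsf{N}(v)$ and is automatically transitively closed (being the outer-before-inner pairs of a linear ordering), so $\mathsf{N}(w') \supseteq \overline{\mathsf{N}(u) \cup \mathsf{N}(v)} = \mathsf{N}(w)$, giving $w' \preceq w$.

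The main obstacle is verifying condition (4) of Lemma~\ref{inv-set-lem} for the candidate inversion set $I$: whenever $(i, k) \in I$ and $i < j < k$ with $j \not\equiv i, k \modu{n})$, at least one of $(i, j), (j, k)$ lies in $I$. This hinges on a combinatorial compatibility between overlapping nested cycle configurations and the rigid positions of non-nested cycles inherited from $u$ and $v$, and must be checked via a case analysis guided by Lemma~\ref{t-length-lem}.
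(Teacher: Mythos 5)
The paper's proof sidesteps everything you flag as a difficulty: it packages the atom $u = [e_1,f_1,\dots,e_l,f_l]^{-1}$ into the inversion set $\Inv^\fpf(u)=\{(i,j): i<j\not\equiv i\ (\mathrm{mod}\ n),\ e_j<e_i<f_i<f_j\}$ of a permutation of \emph{cycle positions} in $\tS_l$ with $l=n/2$ (a close relative of your $\mathsf{N}$), verifies via Lemma~\ref{inv-set-lem} that this is the inversion set of a unique $\pi\in\tS_l$, and observes that $u\mapsto\pi$ is an order isomorphism from $(\cAfpf(z),\prec)$ onto an \emph{interval} in the right weak order $(\tS_l,<_R)$. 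The lattice property is then immediate from Bj\"orner--Brenti's remark after Theorem~3.2.1 that a bounded interval in weak order is a lattice. Your nesting invariant $\mathsf{N}$ is essentially the same combinatorial data, but by trying to build the meet directly you end up having to reprove that fact by hand, which is why you hit the wall you describe.

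Two concrete gaps. First, the step you acknowledge---producing an atom whose $\mathsf{N}$ equals $\overline{\mathsf{N}(u)\cup\mathsf{N}(v)}$ by assembling an inversion set in $\ZZ\times\ZZ$ and invoking Lemma~\ref{inv-set-lem}---is precisely the content of ``weak-order meets of elements below a common bound exist,'' and you offer no argument for condition~(4); moreover you would be constructing an inversion set for $\tS_n$ rather than for the cleaner $\tS_l$ of cycle positions, and you would still have to show the resulting $w\in\tS_n$ lies in $\cAfpf(z)$ (conjugates $\Theta$ correctly and has length $\ellfpf(z)$), which you do not address. Second, and unacknowledged: your closing ``giving $w'\preceq w$'' needs the converse of what you proved. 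You establish that $x\prec y$ forces $\mathsf{N}(x)\supsetneq\mathsf{N}(y)$, but the optimality step requires $\mathsf{N}(w')\supseteq\mathsf{N}(w)\Rightarrow w'\preceq w$, i.e., that $\mathsf{N}$ is an order-\emph{embedding}, not merely an order-reversing injection. This is exactly the nontrivial half of the isomorphism onto a weak-order interval, which the paper handles by identifying the image explicitly and citing the known lattice structure there.
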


\begin{proof}
Let $l=\frac{n}{2}$ and $u\in\cAfpf(z)$.
If $z\in\cFp$ (respectively, $z \in \cFm$) then define
\[e_i = u^{-1}(2i-1)\text{ and } f_i = u^{-1}(2i)
\qquad\text{(respectively, 
$e_i = u^{-1}(2i) \text{ and } f_i= u^{-1}(2i+1)$)}\] for each $i \in \ZZ$. Then $u=[e_1,f_1,\cdots,e_{l},f_{l}]^{-1}$ and by Theorem~\ref{atom-thm} we have
 $e_i<f_i= z(e_i)$. By Theorem~\ref{atom-thm}, if $i<j$ then we cannot have $e_i>e_j$ and $f_i>f_j$. Define 
 \[\Inv^\fpf(u)=\{(i,j):i<j\not\equiv i\modu{n}),e_j<e_i<f_i<f_j\}.\]
We claim that this is the inversion set of a unique permutation $\pi\in\tS_{l}$. It satisfies the first two conditions in Lemma~\ref{inv-set-lem} obviously. For the third condition, note that if $\{(i,j),(j,k)\}\subset\Inv^\fpf(z)$, then we have $e_k<e_j<e_i<f_i<f_j<f_k$, so $(i,k)\in\Inv^\fpf(z)$. For the last condition, note that if $(i,k)\in\Inv^\fpf(z)$, then for each $j$ with $i<j<k$, one of the following cases occurs:
\begin{enumerate}
\item[(1)] $e_j<e_k<e_i<f_i<f_j<f_k$;
\item[(2)] $e_j<e_k<e_i<f_i<f_k<f_j$;
\item[(3)] $e_k<e_j<e_i<f_i<f_j<f_k$;
\item[(4)] $e_k<e_i<e_j<f_i<f_j<f_k$;
\item[(5)] $e_k<e_i<e_j<f_j<f_i<f_k$.
\end{enumerate}
Cases (1)-(3) imply that $(i,j)\in\Inv^\fpf(z)$ while cases (3)-(6) imply that $(j,k)\in\Inv^\fpf(z)$. 
We conclude by Lemma~\ref{inv-set-lem} that there is a unique $\pi\in\tS_{l}$ with $\Inv(\pi)=\Inv^\fpf(u)$.

Now consider the map $\phi : \cAfpf(z) \to \tS_l$ given by $\phi(u) = \pi$.
Recall from \cite[Proposition 3.1.2]{BB} that the right weak order on a Coxeter group with length function $\ell$ is
defined by setting $x\le_R y$ if and only if $\ell(y)=\ell(x)+\ell(x^{-1}y)$. 
It is easy to see that $\phi$ is an order-preserving bijection from $(\cAfpf(z),\prec)$ to an interval in $(\tS_l,<_R)$.
By the remarks after \cite[Theorem 3.2.1]{BB}, the right weak order on a Coxeter group with an upper bound is a lattice. Thus  $\phi$ is an isomorphism from $\cAfpf(z)$ to a sublattice of $(\tS_{l},<_R)$.
\end{proof}

\begin{proposition}
Fix an affine FPF-involution $z\in\cF_n$. Then $|\cAfpf(z)| = 1$ if and only if $z$ is 321-avoiding in sense that if $i<j<k$ then we do not have $z(i)>z(j)>z(k)$.
\end{proposition}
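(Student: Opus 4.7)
The strategy is to translate $|\cAfpf(z)| = 1$ via Theorem~\ref{atom-thm} into an explicit condition on $\alpha^\fpf_{\min}(z)$, and then show this condition is equivalent to 321-avoidance. By Theorem~\ref{atom-thm}, $\cAfpf(z)$ has unique minimum $\alpha^\fpf_{\min}(z)$ under the transitive closure of $\prec$, so $|\cAfpf(z)| = 1$ holds if and only if no $\prec$-move is applicable to $\alpha^\fpf_{\min}(z)$. Writing $\alpha^\fpf_{\min}(z)^{-1} = [a_1, b_1, \ldots, a_l, b_l]$ with $a_1 < \cdots < a_l$ in $[n]$ and $b_i = z(a_i)$, I would first classify the possible $\prec$-moves. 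A move at four consecutive window positions $(k, k+1, k+2, k+3)$ requires the entries there to read $(a, d, b, c)$ with $a < b < c < d$. Using the periodicity $a_{i+l} = a_i + n$ and $b_{i+l} = b_i + n$ and the strict monotonicity of $(a_i)_{i \in \ZZ}$, a short case analysis on $k$ modulo $n$ rules out every even $k$ and shows that the only candidate moves are at positions $(2i-1, 2i, 2i+1, 2i+2)$ for some $1 \le i \le l$, with condition $a_i < a_{i+1} < b_{i+1} < b_i$ when $i < l$ and $b_l > b_1 + n$ when $i = l$ (since $a_l \le n < a_1 + n$ is automatic).

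This reduces $|\cAfpf(z)| = 1$ to the condition $(\star)$: $b_1 < b_2 < \cdots < b_l < b_1 + n$. One direction is direct: any failure of $(\star)$ produces an explicit 321-pattern. An interior failure $a_i < a_{i+1} < b_{i+1} < b_i$ gives the pattern $(p, q, r) = (a_i, a_{i+1}, b_{i+1})$ with $z$-values $(b_i, b_{i+1}, a_{i+1})$; a boundary failure $b_l > b_1 + n$ gives $(p, q, r) = (a_1, b_1, b_l - n)$ with $z$-values $(b_1, a_1, a_l - n)$, where $a_l \le n$ forces $a_l - n < a_1$ as required.

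For the converse, I would assume $(\star)$ and extend $(a_i), (b_i)$ to strictly increasing sequences indexed by all of $\ZZ$ via $a_{i+l} = a_i + n$ and $b_{i+l} = b_i + n$; since $z$ is fixed-point-free, every integer appears exactly once as an $a_i$ or a $b_i$. Supposing for contradiction that $p < q < r$ realize a 321-pattern, I would perform an eight-way case analysis according to whether each of $p, q, r$ is an $a$-value or a $b$-value. In each sub-case, combining the global monotonicity of $(a_i)$ and $(b_i)$ with the identity $a_i < b_i$ forces a contradiction via one or two comparisons. I expect the main obstacle to be not the individual checks, which are short, but rather the careful bookkeeping of global indices (which may be negative or exceed $l$) across all eight sub-cases, together with the handful of mildly delicate cases where two of $p, q, r$ are endpoints of the same arc.
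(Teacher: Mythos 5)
Your proposal is correct and takes essentially the same route as the paper: both sides reduce $|\cAfpf(z)| = 1$ via Theorem~\ref{atom-thm} to a monotonicity condition on the sequence $(b_i)$ and then identify this condition with 321-avoidance. The one place where your account is genuinely more careful is in isolating the wrap-around inequality $b_l < b_1 + n$ as a separate part of the characterization: the paper's phrase ``$b_i < b_{i+1}$ for all $i$'' must be read cyclically, since consecutive-interior monotonicity alone does not force $\alpha^\fpf_{\min}(z) = \alpha^\fpf_{\max}(z)$. For instance $z = [2,1,0,7] \in \cF_4$ has $b_1 = 2 < b_2 = 7$ but $\alpha^\fpf_{\min}(z) \neq \alpha^\fpf_{\max}(z)$, and correspondingly $(1,2,3)$ is a 321-pattern in $z$. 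On the sufficiency side, the paper replaces your eight-way case split with a one-line pigeonhole argument (among any $i<j<k$, two lie in the $a$-orbit or two lie in the $b$-orbit, and each orbit is increasing and $z$-order-preserving once $(\star)$ holds), which is shorter but requires exactly the same extended-sequence observation you make.
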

\begin{proof}
If $|\cAfpf(z)| = 1$, then $\alpha^\fpf_{\min}(z)=\alpha^\fpf_{\max}(z)=[a_1,b_1,a_2,b_2,\cdots,a_l,b_l]^{-1}$, where $b_i<b_j$ for all $i<j$. In this case, if $i<j<k$, 
then two of the elements of $\{i,j,k\}$ belong to $\{a_1,\dots,a_l\}$ or to $\{b_1,\dots,b_l\}$ and it is easy to check that we cannot have $z(i) > z(j) > z(k)$; for example, if $i=a_p<a_q=j$, then we have $z(i)=b_p<b_q=z(j)$.

Conversely, assume $z$ is 321-avoiding and let $\alpha^\fpf_{\min}(z)=[a_1,b_1,a_2,b_2,\cdots,a_l,b_l]^{-1}$. Since $a_i<a_{i+1}<b_{i+1}$,  we do not have $z(a_i)=b_i>z(a_{i+1})=b_{i+1}>z(b_{i+1})=a_{i+1}$. Since $b_{i+1} > a_{i+1}$, we do not have $b_i>b_{i+1}$ for all $i$. Thus $b_i<b_{i+1}$ for all $i$ and $\alpha^\fpf_{\min}(z)=\alpha^\fpf_{\max}(z)$. Therefore $|\cAfpf(z)| = 1$.
\end{proof}

\begin{proposition}\label{transform-thm}
If $\tau$ is any automorphism of $\tS_n$ that preserves $\{s_1,s_2,\dots,s_n\}$, then $\tau$ preserves $\cF_n$ and $\cAfpf(\tau(z))=\{\tau(\pi):\pi\in\cAfpf(z)\}$.
\end{proposition}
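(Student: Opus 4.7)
The plan begins with two preliminary observations. First, since $\tau$ permutes the Coxeter generators, any reduced expression $w=s_{i_1}\cdots s_{i_\ell}$ is mapped by $\tau$ to an expression $\tau(w)=\tau(s_{i_1})\cdots\tau(s_{i_\ell})$ of the same length; applying the same argument to $\tau^{-1}$ then yields $\ell(\tau(w))=\ell(w)$ for every $w\in\tS_n$. Second, $\tau$ is a group automorphism, so $\tau(z)^2=\tau(z^2)=1$ whenever $z^2=1$, meaning $\tau$ preserves the set of involutions.

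The key structural fact I would establish is that $\tau(\{\Theta^+,\Theta^-\})=\{\Theta^+,\Theta^-\}$. Indeed, $\tau$ induces an automorphism of the Coxeter diagram of $\tilde A_{n-1}$, which is an $n$-cycle with vertex set $\{s_1,\dots,s_n\}$. Since $n$ is even, this graph admits a unique bipartition, namely $\{s_1,s_3,\dots,s_{n-1}\}\sqcup\{s_2,s_4,\dots,s_n\}$, and any graph automorphism either preserves or swaps its two parts. The generators within each part pairwise commute, and $\Theta^+$ (respectively $\Theta^-$) is precisely their product, so $\tau$ carries each of $\Theta^{\pm}$ to some element of $\{\Theta^+,\Theta^-\}$.

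To see that $\tau$ preserves $\cF_n$, take $z\in\cF_n$ and write $z=w^{-1}\Theta w$ with $\Theta\in\{\Theta^{\pm}\}$, using \eqref{+-eq}. Then $\tau(z)=\tau(w)^{-1}\tau(\Theta)\tau(w)$ is conjugate to $\tau(\Theta)\in\{\Theta^{\pm}\}$, so $\tau(z)\in\cFp\cup\cFm=\cF_n$. For the second assertion, set $\Theta'=\tau(\Theta)$: being a length-preserving group automorphism, $\tau$ restricts to a bijection between $\{w\in\tS_n:w^{-1}\Theta w=z\}$ and $\{v\in\tS_n:v^{-1}\Theta' v=\tau(z)\}$, and hence between their subsets of minimum-length elements. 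Since $\tau(z)$ lies in $\cFp$ or $\cFm$ according to whether $\Theta'=\Theta^+$ or $\Theta^-$, this matches the $\Theta'$ used in the definition of $\cAfpf(\tau(z))$, giving the stated equality $\cAfpf(\tau(z))=\{\tau(\pi):\pi\in\cAfpf(z)\}$.

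The only step I anticipate needing care is the claim that $\tau$ must send $\{\Theta^+,\Theta^-\}$ to itself; once the bipartite structure of the Coxeter diagram is invoked, the length-preservation property and the bijection on conjugating elements deliver everything else routinely.
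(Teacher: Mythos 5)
Your proposal is correct and follows essentially the same route as the paper's proof: note that $\tau$ is length-preserving, that $\tau(\Theta)\in\{\Theta^\pm\}$, and then transport the conjugacy condition and minimal-length condition through $\tau$. The only difference is that you supply a justification (via the bipartition of the even $n$-cycle Coxeter diagram) for the fact that $\tau$ maps $\{\Theta^+,\Theta^-\}$ to itself, which the paper states without explanation; this is a welcome addition but not a structurally different argument.
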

\begin{proof}
Since $\tau$ is an automorphism preserving $\{s_1,s_2,\dots,s_n\}$, we have $\tau(\Theta)\in\tpm$
and $\tau$ is length-preserving. Therefore $\tau(w^{-1}\Theta w)=(\tau(w))^{-1}\tau(\Theta)\tau(w)\in\cF_n$ and $\cAfpf(\tau(z))=\{\tau(\pi):\pi\in\cAfpf(z)\}$.
\end{proof}

As a special case, if $\tau$ is the automorphism of $\tS_n$ with $\tau(s_i) = s_{i+1}$ for all $i$, then $\tau$ restricts to a bijection $\cFp \to \cFm$,
and it follows from the preceding result and \cite[Proposition 18]{Lam} that $\Ffpf_z = \Ffpf_{\tau(z)}$.

\begin{proposition}\label{alpha-conj-prop}
For an affine FPF-involution $z$, let $\alpha^\fpf_{\min}(z)=[a_1,b_1,a_2,b_2,\cdots,a_l,b_l]^{-1}$ as in the definition \eqref{amin-def} and assume $s_izs_i\neq z$. Then we have 
$$\alpha^\fpf_{\min}(s_izs_i)=\begin{cases}\alpha^\fpf_{\min}(z)t_{z(i),z(i+1)}&\text{ if }z(i)>i\text{ and }z(i+1)>i+1\\\alpha^\fpf_{\min}(z)s_i&\text{ otherwise}\end{cases}$$
\end{proposition}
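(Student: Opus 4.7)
The plan is to exploit the uniqueness characterization implicit in definition~\eqref{amin-def}: an element $\gamma\in\tS_n$ equals $\alpha^\fpf_{\min}(s_izs_i)$ exactly when $\gamma^{-1}\Theta\gamma = s_izs_i$ for the appropriate $\Theta \in \{\Theta^\pm\}$ and the window $\gamma^{-1}=[a'_1,b'_1,\dots,a'_l,b'_l]$ satisfies $a'_1<\dots<a'_l$ in $[n]$ and $b'_j=(s_izs_i)(a'_j)$. Write $\alpha=\alpha^\fpf_{\min}(z)$ with $\alpha^{-1}=[a_1,b_1,\dots,a_l,b_l]$. Since $(\alpha u)^{-1}\Theta(\alpha u)=u^{-1}zu$, for each candidate right-multiplier $u$ I need to (a) choose $u$ so that $u^{-1}zu = s_izs_i$ and (b) verify that $(\alpha u)^{-1}=u^{-1}\circ\alpha^{-1}$ has the required window form.

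In the first case ($z(i)>i$ and $z(i+1)>i+1$), both $i$ and $i+1$ are $a$-indices of $z$: write $i=a_p$, $i+1=a_q$, and set $j:=z(i)=b_p$, $k:=z(i+1)=b_q$. The assumption $s_izs_i \neq z$ together with $z(i+1)>i+1$ forces $z(i)\neq i+1$, so $j,k\geq i+2$; and FPF-ness (which gives $z(m)\not\equiv m\pmod n$ for every $m$) combined with $a_p,a_q\in[n]$ rules out $j,k\in\{i,i+1\}+n\ZZ$. Consequently conjugating $z$ by either $t_{j,k}$ or $s_i$ swaps the cycles $(i,j)$ and $(i+1,k)$ into $(i,k)$ and $(i+1,j)$ with no other effect on the cycle structure, giving $t_{j,k}\,z\,t_{j,k}=s_izs_i$. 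Taking $u=t_{j,k}$, the inverse of $\alpha t_{j,k}$ is obtained from $\alpha^{-1}$ by swapping the values $j$ and $k$ in positions $2p$ and $2q$; the resulting window $[\dots,(a_p,k),\dots,(a_q,j),\dots]$ still has strictly increasing $a$-entries $a_1<\dots<a_l$ and correct $s_izs_i$-images, so $\alpha t_{j,k}=\alpha^\fpf_{\min}(s_izs_i)$.

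In the remaining case I take $u=s_i$, for which trivially $u^{-1}zu=s_izs_i$. The analysis splits into three subcases based on whether each of $i,i+1$ is an $a$ or a $b$ of $z$: (2a) both $b$'s; (2b) $i$ is an $a$ and $i+1$ a $b$; (2c) $i$ is a $b$ and $i+1$ an $a$. In each, the window of $(\alpha s_i)^{-1}=s_i\circ\alpha^{-1}$ is obtained from that of $\alpha^{-1}$ by substituting $i\leftrightarrow i+1$ in at most two entries. The hard part is confirming that the resulting window still satisfies~\eqref{amin-def}: the $a$-list remains strictly increasing and all $b$-values are correctly paired with the new $a$-values under $s_izs_i$. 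This relies on (i) every $a_r$ lying in $[n]\setminus\{i,i+1\}$ and every $b_r$ outside the modified slots satisfying $b_r\not\equiv a_r\pmod n$, so that $s_i$ fixes all the other window entries; (ii) the hypothesis $s_izs_i\neq z$ excluding the only configuration ($z(i)=i+1$ together with its $n$-translates) that would otherwise force $s_i$ to act nontrivially on further cycles; and (iii) the adjacency of $i$ and $i+1$ in the integers, which guarantees that the single swap in the sorted $a$-list still produces a strictly increasing sequence. This bookkeeping is the principal obstacle, but once these three ingredients are verified in each subcase, the window-form characterization concludes the argument.
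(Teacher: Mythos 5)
Your overall strategy --- determine the cycle structure of $s_i z s_i$ from that of $z$, case-split on whether $r_n(i)$ and $r_n(i+1)$ are minima or maxima of their $z$-cycles, and match the resulting window against that of $\alpha^\fpf_{\min}(z)u$ --- is the same as the paper's. But the proposal has two real gaps.

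First, the justification that $j,k \notin \{i,i+1\}+n\ZZ$ in your first case is not right as stated. FPF-ness gives only $j \not\equiv i$ and $k \not\equiv i+1 \pmod n$. Ruling out $j\equiv i+1$ (equivalently $k\equiv i$) is not a consequence of ``FPF-ness combined with $a_p,a_q\in[n]$''; it requires the case hypotheses $z(i)>i$ and $z(i+1)>i+1$ together with your earlier observation $j,k\ge i+2$: if $j=i+1+mn$ with $m\ge 1$, then $k=z(i+1)=i-mn\le i-n<i+1$, contradicting $z(i+1)>i+1$, and symmetrically for $k\equiv i$.

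Second, and more seriously, the proposal silently assumes there is no wrap-around at $n$. Writing ``$i=a_p$, $i+1=a_q$'' forces both $i$ and $i+1$ to lie in $[n]$, which fails when $i=n$; the same issue hits cases 2b and 2c when $a_j=n$ or $a_k=1$. In the wrap-around configurations the new $a$-list is genuinely re-indexed (e.g.\ $a_l=n$ drops out and $1$ enters as the new minimum), and the relevant $b$-slot gets hit by an $n$-translate of $j$ or $k$ rather than by $j,k$ themselves. Your ingredient (iii) --- ``adjacency guarantees the single swap keeps the sorted $a$-list strictly increasing'' --- does not cover this: the list is not merely locally swapped but cyclically re-sorted. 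The paper devotes an explicit subcase ($a_j=n$, resp.\ $a_k=1$) to the wrap-around inside each of its four main cases, and verifies the window identity via a shift of the window by two positions. With those subcases added and the congruence argument in case 1 corrected, your outline would close.
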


\begin{proof}
Write $\alpha^\fpf_{\min}(z)=[a_1,b_1,a_2,b_2,\cdots,a_l,b_l]^{-1}$ as in the definition \eqref{amin-def}.
Let $y=s_izs_i$ and consider the following four cases.
\begin{enumerate}

\item Suppose $a_j=i$ and $a_k\equiv i+1\modu{n})$. Then $\{\overline{(a_j+1,b_j)},\overline{(a_k-1,b_k)}\}\subset\wCyc(y)$. If $a_j+1\le n$ then \[\{(a_j+1,b_j),(a_k-1,b_k)\}=\{(a_k,b_j),(a_j,b_k)\}\subset\Cyc(y)\] while the other cycles remain unchanged. Then we have 
\[\alpha^\fpf_{\min}(y)=[a_1,b_1,\cdots,a_j,b_k,a_k,b_j,\cdots,a_l,b_l]^{-1}=\alpha^\fpf_{\min}(z)t_{b_j,b_k}.\] Therefore $\alpha^\fpf_{\min}(y)=\alpha^\fpf_{\min}(z)t_{z(i),z(i+1)}$.
If $a_j=n$, then 
\[\{(a_j+1-n,b_j-n),(a_k-1+n,b_k+n)\}=\{(1,b_j-n),(n,b_k+n)\}\subset\Cyc(y)\] while the other cycles remain unchanged. Then we have 
\[ \alpha^\fpf_{\min}(y)=[1,b_j-n,\cdots,n,b_k+n]^{-1}
\qquand \alpha^\fpf_{\min}(z)=[1,b_k,\cdots,n,b_j]^{-1}.\] Therefore $\alpha^\fpf_{\min}(y)=\alpha^\fpf_{\min}(z)t_{z(i),z(i+1)}$.

\item Suppose $a_j=i$ and $b_k\equiv i+1\modu{n})$. Then $\{\overline{(a_j+1,b_j)},\overline{(a_k,b_k-1)}\}\subset\wCyc(y)$. If $a_j+1\le n$ and $a_k=a_j$, then \[\{(a_j+1,b_j-1)\}\subset\Cyc(y)\] while the other cycles remain unchanged. Therefore we have \[\alpha^\fpf_{\min}(y)=[a_1,b_1,\cdots,a_j+1,b_j-1,\cdots,a_l,b_l]^{-1}=\alpha^\fpf_{\min}(z)s_i.\]
If $a_j+1\le n$ and $a_k<a_j$, then $\{(a_j+1,b_j),(a_k,b_k-1)\}\subset\Cyc(y)$
 while the other cycles remain unchanged. Therefore we have 
\[\alpha^\fpf_{\min}(y)=[a_1,b_1,\cdots,a_k,b_k-1,\cdots,a_j+1,b_j,\cdots,a_l,b_l]^{-1}=\alpha^\fpf_{\min}(z)s_i.\]
If $a_j=n$, then $\{(a_j+1-n,b_j-n),(a_k,b_k-1)\}=\{(1,b_j-n),(a_k,b_k-1)\}\subset\Cyc(y)$ while the other cycles remain unchanged. Then we have \[
\ba
\alpha^\fpf_{\min}(y)&=[1,b_j-n,a_1,b_1,\cdots,a_k,b_k-1,\cdots,a_{l-1},b_{l-1}]^{-1}
\\&=[a_1,b_1,\cdots,a_k,b_k-1,\cdots,a_{l-1},b_{l-1},n+1,b_j]^{-1}
\ea\]
and
$ \alpha^\fpf_{\min}(z)=[a_1,b_1,\cdots,a_k,b_k,\cdots,a_{l-1},b_{l-1},n,b_j]^{-1}.$ Therefore $\alpha^\fpf_{\min}(y)=\alpha^\fpf_{\min}(z)s_i$.

\item Suppose $b_j\equiv i\modu{n})$ and $a_k\equiv i+1\modu{n})$. Then $\{\overline{(a_j,b_j+1)},\overline{(a_k-1,b_k)}\}\subset\wCyc(y)$. If $a_k-1>0$ and $k=j$, then $\{(a_j-1,b_j+1)\}\subset \Cyc(y)$ while the other cycles remain unchanged. Therefore we have \[\alpha^\fpf_{\min}(y)=[a_1,b_1,\cdots,a_j-1,b_j+1,\cdots,a_l,b_l]^{-1}=\alpha^\fpf_{\min}(z)s_i.\]
If $a_k-1>0$ and $k\neq j$, then $\{(a_j,b_j+1),(a_k-1,b_k)\}\subset \Cyc(y)$ while the other cycles remain unchanged. Then we have \[\alpha^\fpf_{\min}(y)=[a_1,b_1,\cdots,a_j,b_j+1,\cdots,a_k-1,b_k,\cdots,a_l,b_l]^{-1}\] or $[a_1,b_1,\cdots,a_k-1,b_k,\cdots,a_j,b_j+1,\cdots,a_l,b_l]^{-1}$ and \[\alpha^\fpf_{\min}(z)=[a_1,b_1,\cdots,a_j,b_j,\cdots,a_k,b_k,\cdots,a_l,b_l]^{-1}\] or $[a_1,b_1,\cdots,a_k,b_k,\cdots,a_j,b_j,\cdots,a_l,b_l]^{-1}$ respectively. In both cases $\alpha^\fpf_{\min}(y)=\alpha^\fpf_{\min}(z)s_i$.

If $a_k=1$, then $\{(a_j,b_j+1),(a_k-1+n,b_k+n)\}=\{(a_j,b_j+1),(n,b_k+n)\}\subset \Cyc(y)$ while the other cycles remain unchanged. Then we have 
\[
\ba
\alpha^\fpf_{\min}(y)&=[a_2,b_2,\cdots,a_j,b_j+1,\cdots,a_l,b_l,n,b_k+n]^{-1}
=[0,b_k,a_2,b_2,\cdots,a_j,b_j+1,\cdots,a_l,b_l]^{-1}
\ea\] and $\alpha^\fpf_{\min}(z)=[1,b_k,a_2,b_2,\cdots,a_j,b_j,\cdots,a_l,b_l]^{-1}.$ Therefore $\alpha^\fpf_{\min}(y)=\alpha^\fpf_{\min}(z)s_i$.

\item Suppose $b_j\equiv i\modu{n})$ and $b_k\equiv i+1\modu{n})$. Then $\{(a_j,b_j+1),(a_k,b_k-1)\}\subset \Cyc(y)$ while the other cycles remain unchanged. Then we have \[\alpha^\fpf_{\min}(y)=[a_1,b_1,\cdots,a_j,b_j+1,\cdots,a_k,b_k-1,\cdots,a_l,b_l]^{-1}\] or $[a_1,b_1,\cdots,a_k,b_k-1,\cdots,a_j,b_j+1,\cdots,a_l,b_l]^{-1}$ and \[\alpha^\fpf_{\min}(z)=[a_1,b_1,\cdots,a_j,b_j,\cdots,a_k,b_k,\cdots,a_l,b_l]^{-1}\] or $[a_1,b_1,\cdots,a_k,b_k,\cdots,a_j,b_j,\cdots,a_l,b_l]^{-1}$ respectively. In both cases $\alpha^\fpf_{\min}(y)=\alpha^\fpf_{\min}(z)s_i$.
\end{enumerate}
\end{proof}

\begin{proposition}\label{length-t-conj-prop}
Fix $i<j\not\equiv i\modu{n})$, $t=t_{ij}$, and $z\in\cF_n$.
For $w \in \tS_n$, define 
$$E(w,i,j)=\{k \in \ZZ : i<k<j\text{ and }k\not\equiv i\modu{n})\text{ and }w(k)\text{ is between }w(i)\text{ and }w(j)\}$$
and let $\delta(w,i,j) = |E(w,i,j)|$.
 If $z(i)\not\equiv j\modu{n})$, then we have 
$$\ell(tzt)=\begin{cases}\ell(z)+2\delta(tz,i,j)+2\delta(z,i,j)+2&\text{ if }z(i)<z(j)\\\ell(z)-2\delta(tzt,i,j)-2\delta(zt,i,j)-2&\text{ if }z(i)>z(j).\end{cases}$$
If $z(i)=j+mn$ for $m\in\ZZ$, so that $z(j) = i-mn$, then we have
$$\ell(tzt)=\begin{cases}\ell(z)+2\delta(tz,i,j)+2\delta(z,i,j)+2&\text{ if }m<-\frac{j-i}{2n}\\
\ell(z)+2\delta(tz,i,j)-2\delta(zt,i,j)>\ell(z)&\text{ if }-\frac{j-i}{2n}<m<0\\
\ell(z)+2\delta(tz,i,j)-2\delta(zt,i,j)=\ell(z)&\text{ if }m=0\\
\ell(z)+2\delta(tz,i,j)-2\delta(zt,i,j)<\ell(z)&\text{ if }0<m<\frac{j-i}{2n}\\
\ell(z)-2\delta(tzt,i,j)-2\delta(zt,i,j)-2&\text{ if }m>\frac{j-i}{2n}.\end{cases}$$
\end{proposition}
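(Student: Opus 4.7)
The plan is to apply Lemma~\ref{t-length-lem} twice, using the involution property $z^{-1}=z$ to identify $\ell(tz)=\ell(zt)$, and then to combine the two resulting formulas via case analysis on the signs of $z(i)-z(j)$ and $(tz)(i)-(tz)(j)$. A useful preliminary observation is that for an FPF $z$, one always has $z(i)\not\equiv i\pmod{n}$: otherwise $z(i)=i+kn$ would give $i=z^2(i)=i+2kn$, forcing $k=0$ and contradicting the FPF condition. When additionally $z(i)\not\equiv j\pmod{n}$ (Case A), a short calculation using $z^2=1$ yields $z(j)\not\equiv i\pmod{n}$ as well, so the reflection $t$ fixes both $z(i)$ and $z(j)$. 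Consequently $(tz)(i)=z(i)$ and $(tz)(j)=z(j)$, and applying Lemma~\ref{t-length-lem} first with $w=z$ and then with $w=tz$ immediately recovers the two formulas of Case A.

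For Case B, where $z(i)=j+mn$ and hence $z(j)=i-mn$, a direct computation gives $(tz)(i)=t(j+mn)=i+mn$ and $(tz)(j)=t(i-mn)=j-mn$. The inequalities $z(i)<z(j)$ and $(tz)(i)<(tz)(j)$ translate to $m<-\tfrac{j-i}{2n}$ and $m<\tfrac{j-i}{2n}$, respectively. In the two outer ranges $m<-\tfrac{j-i}{2n}$ and $m>\tfrac{j-i}{2n}$, the two signs agree and the lemma applications yield the first and fifth lines of the statement. In the central range $-\tfrac{j-i}{2n}<m<\tfrac{j-i}{2n}$, we have $z(i)>z(j)$ but $(tz)(i)<(tz)(j)$, and the combined formula becomes $\ell(tzt)=\ell(z)+2\delta(tz,i,j)-2\delta(zt,i,j)$, which is the common expression appearing in the three middle cases.

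What remains is to verify the strict comparisons with $\ell(z)$ inside the central range. For $m=0$, the preceding FPF observation together with $z(i)=j$ and $z(j)=i$ shows that $t$ and $z$ commute on every class mod $n$, so $tzt=z$ and $\ell(tzt)=\ell(z)$. For $0<m<\tfrac{j-i}{2n}$ (respectively, $-\tfrac{j-i}{2n}<m<0$), the cycles $(i,j+mn)$ and $(j,i-mn)$ of $z$ are replaced in $tzt$ by the strictly narrower (respectively, wider) cycles $(i,j-mn)$ and $(j,i+mn)$, while all other cycles are preserved. I would finish by constructing an explicit injection $E(tz,i,j)\hookrightarrow E(zt,i,j)$ (with an exhibited missing element) in the positive case and the symmetric injection in the negative case, yielding the required strict inequality between $\delta(tz,i,j)$ and $\delta(zt,i,j)$. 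The main obstacle is this final bookkeeping: although the sign is intuitively clear from the cycle-width picture, matching the elements of $E(tz,i,j)$ and $E(zt,i,j)$ one-by-one requires careful tracking of how $t$ interacts with the $z$-orbits of the integers strictly between $i$ and $j$, especially with translates of $i$ and $j$ themselves.
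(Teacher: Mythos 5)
Your framework is correct and matches the paper's for most of the proof: you apply Lemma~\ref{t-length-lem} twice, note that $tz(i)=z(i)$ and $tz(j)=z(j)$ when $z(i)\not\equiv j\modu{n})$, and in the case $z(i)=j+mn$ you correctly compute $tz(i)=i+mn$, $tz(j)=j-mn$, translating the sign conditions into conditions on $m$; the $m=0$ argument ($tzt=z$) is also fine. However, the one part you leave as a sketch --- showing $\delta(tz,i,j)-\delta(zt,i,j)$ has the same sign as $-m$ when $0<|m|<\tfrac{j-i}{2n}$ --- is precisely the substantive step, and you explicitly acknowledge you have not carried out the bookkeeping. That makes this a genuine gap: without an actual injection (and a concrete missing element), the sign claim in the three middle cases is not established. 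Comparing $E(tz,i,j)$ with $E(zt,i,j)$ is awkward because these are attached to \emph{different} permutations, and the elements $k\equiv j\modu{n})$ require separate, nontrivial treatment.

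The paper sidesteps this cleanly by a different device. It writes $z=y\,t_{i,j+mn}$ where $y\in\tI_n$ fixes $\{i,j\}+n\ZZ$ pointwise, so that $tzt=y\,t_{i+mn,j}$, and then applies Lemma~\ref{t-length-lem} to $y$ itself, reducing the question to comparing $\delta(y,i,j+mn)$ with $\delta(y,i+mn,j)$. Because both $\delta$'s now involve the \emph{same} permutation $y$ and nested intervals $(i+mn,j)\subset(i,j+mn)$ (when $m>0$, or the reverse nesting when $m<0$), the containment $E(y,i+mn,j)\subset E(y,i,j+mn)$ is immediate, and strictness comes from exhibiting a single translate of $j$ in the difference. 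If you continue with your direct approach, you should expect to do exactly that kind of case split on $k\equiv j\modu n)$, and you should verify that the integer count of missing translates is at least $1$ using $m\ge1$ and $\tfrac{j-i}{n}>2m$. But adopting the paper's factorization $z=yt_{i,j+mn}$ will save you that effort.
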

\begin{proof}
By Lemma~\ref{t-length-lem} we have 
$$\ell(tzt)=\begin{cases}\ell(tz)+2\delta(tz,i,j)+1&\text{ if }tz(i)<tz(j)\\\ell(tz)-2\delta(tzt,i,j)-1&\text{ if }tz(i)>tz(j)\end{cases}
\quand
\ell(tz)=\begin{cases}\ell(z)+2\delta(z,i,j)+1&\text{ if }z(i)<z(j)\\\ell(z)-2\delta(zt,i,j)-1&\text{ if }z(i)>z(j).\end{cases}$$
If $z(i)\not\equiv j\modu{n})$, then we have $tz(i)=z(i)$ and $tz(j)=z(j)$,
while if $z(i)=j+mn$ for some $m \in \ZZ$, then $tz(i)=i+mn$, $tz(j)=j-mn$, $z(i)=j+mn$, and $z(j)=i-mn$. In each case it is straightforward to
derive the desired length formulas from the preceding equations.
%

Assume $z(i) = j+mn$ for some $m \in \ZZ$ with $-\frac{j-i}{2n} < m < \frac{j-i}{2n}$.
It remains to show that $\ell(z) - \ell(tzt)$ has the same sign as $m$.
We can write $z=yt_{i,j+mn}$ where $y \in \tI_n$ fixes all elements of $\{i,j\}+n\ZZ$. By Lemma~\ref{t-length-lem} we have $\ell(z)=\ell(y)+2\delta(y,i,j+mn)+1$. Also, $tzt=yt_{i+mn,j}$ and $\ell(tzt)=\ell(y)+2\delta(y,i+mn,j)+1$. When $-\frac{j-i}{2n}<m<\frac{j-i}{2n}$, if $m\neq0$, we have $i<i+mn<j<j+mn$ or $i+mn<i<j+mn<j$. In the first case, $m>0$ and $E(y,i+mn,j)\subset E(y,i,j+mn)$. However, $j+n\in E(y,i,j+mn)$ but $j+n\not\in E(y,i+mn,j)$, so $\delta(y,i,j+mn)>\delta(y,i+mn,j)$ and $\ell(tzt)<\ell(z)$. In the second case, $m<0$ and $E(y,i,j+mn)\subset E(y,i+mn,j)$. However, $j-n\in E(y,i+mn,j)$ but $j-n\not\in E(y,i,j+mn)$, so $\delta(y,i+mn,j)>\delta(y,i,j+mn)$ and $\ell(tzt)>\ell(z)$.

 Finally, if $m=0$ and $z(i)=j$, we have $tzt=z$ and $\ell(tzt)=\ell(z)$.
\end{proof}

\begin{corollary}\label{length-s-conj-prop}
For $z \in \cF_n$ and $i \in \ZZ$, we have 
\[\ellfpf(s_izs_i)=\begin{cases}
\ellfpf(z)-1&\text{ if }z(i)>z(i+1)\neq i\\
\ellfpf(z)&\text{ if }z(i)=i+1\\
\ellfpf(z)+1&\text{ if }z(i)<z(i+1).
\end{cases}\]
\end{corollary}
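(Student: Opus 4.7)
The plan is to obtain this as a direct consequence of Proposition~\ref{length-t-conj-prop} applied to the transposition $t=s_i=t_{i,i+1}$. Since $\ellfpf(z)=\tfrac12(\ell(z)-\tfrac n2)$, a change of $\pm 2$ in $\ell$ corresponds to a change of $\pm 1$ in $\ellfpf$, so it suffices to establish that $\ell(s_izs_i)-\ell(z)$ equals $+2$, $0$, or $-2$ in the three claimed cases.

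The crucial simplification is that with $j=i+1$, the set $E(w,i,i+1)$ is empty for every $w\in\tS_n$, since there is no integer $k$ with $i<k<i+1$. Hence $\delta(w,i,i+1)=0$ in all four relevant forms, and every formula in Proposition~\ref{length-t-conj-prop} collapses to $\ell(s_izs_i)-\ell(z)\in\{-2,0,+2\}$ with the sign determined purely by which subcase of the proposition applies.

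Next I would split on whether $z(i)\equiv i+1\pmod n$. If not, the first part of Proposition~\ref{length-t-conj-prop} immediately gives $\ell(s_izs_i)=\ell(z)+2$ when $z(i)<z(i+1)$ and $\ell(s_izs_i)=\ell(z)-2$ when $z(i)>z(i+1)$, and the condition $z(i+1)\neq i$ holds vacuously. If instead $z(i)=i+1+mn$ for some $m\in\ZZ$, then since $n\geq 2$ we have $\tfrac{j-i}{2n}=\tfrac{1}{2n}<1$, so the open intervals $(-\tfrac{1}{2n},0)$ and $(0,\tfrac{1}{2n})$ contain no integers and only the cases $m\leq -1$, $m=0$, and $m\geq 1$ can occur in the second part of the proposition. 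For $m=0$ we have $z(i)=i+1$ and the proposition returns $\ell(s_izs_i)=\ell(z)$; for $m\leq -1$ one checks $z(i)=i+1+mn<i-mn=z(i+1)$, and the top branch of the proposition gives $+2$; and for $m\geq 1$ one has $z(i)>z(i+1)$ with $z(i+1)=i-mn\neq i$, and the bottom branch gives $-2$. Assembling the cases reproduces the formula for $\ellfpf(s_izs_i)$ exactly.

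There is no real obstacle beyond bookkeeping: the result is a clean specialization of Proposition~\ref{length-t-conj-prop}. The only substantive verification is that the three ``intermediate'' branches of that proposition (those asserting $\ell(tzt)>\ell(z)$, $=\ell(z)$, or $<\ell(z)$ without a $\pm 2$ jump) are vacuous when $j-i=1$, which follows from the inequality $\tfrac{1}{2n}<1$ noted above.
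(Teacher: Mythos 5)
Your proposal is correct and follows essentially the same route as the paper, which simply says the corollary is a special case of Proposition~\ref{length-t-conj-prop} via $\ellfpf(z)=\tfrac12\ell(z)-\tfrac n4$. You fill in the details the paper leaves implicit: with $j=i+1$ every $\delta$-term vanishes since $E(w,i,i+1)=\varnothing$, and because $\tfrac{1}{2n}<1$ the intermediate branches of the proposition are vacuous, so only the clean $\pm 2$ or $0$ jumps in $\ell$ occur. Your case split on $z(i)\equiv i+1\pmod n$ and the verification that $z(i+1)\neq i$ is automatic when $z(i)\not\equiv i+1\pmod n$ are both correct.
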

\begin{proof}
This is a special case of Proposition~\ref{length-t-conj-prop}, since $\ellfpf(z)=\frac{1}{2}\ell(z)-\frac{n}{4}$.
\end{proof}

\begin{corollary}\label{length-t-cor}
Fix $i<j\not\equiv i\modu{n})$, $t=t_{ij}$, and $z\in\cF_n$.
If $\ell(tzt)=\ell(z)+2$, then $\ell(zt)=\ell(tz)=\ell(z)+1$.
\end{corollary}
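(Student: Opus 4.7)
The plan is a case analysis on the value of $z(i)$ modulo $n$ using Proposition~\ref{length-t-conj-prop}. Since $z^{-1}=z$ and $t^{-1}=t$, I first observe that $\ell(tz) = \ell((tz)^{-1}) = \ell(z^{-1}t^{-1}) = \ell(zt)$, so it suffices to show $\ell(zt)=\ell(z)+1$. Examining Proposition~\ref{length-t-conj-prop}, the only scenarios in which $\ell(tzt)\geq\ell(z)+2$ can occur are: (A1) $z(i) \not\equiv j \modu{n})$ with $z(i) < z(j)$; (B1) $z(i) = j+mn$ with $m < -(j-i)/(2n)$; and (B2) $z(i) = j+mn$ with $-(j-i)/(2n) < m < 0$. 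All other branches yield $\ell(tzt) \leq \ell(z)$ and can be discarded.

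In cases (A1) and (B1), the formula $\ell(tzt) = \ell(z) + 2\delta(tz,i,j) + 2\delta(z,i,j) + 2$ combined with $\ell(tzt)=\ell(z)+2$ forces $\delta(z,i,j)=\delta(tz,i,j)=0$. A direct check gives $z(i)<z(j)$ in both subcases (for (B1), the values $z(i)=j+mn$ and $z(j)=i-mn$ satisfy $z(i)<z(j)$ precisely when $m<-(j-i)/(2n)$), so Lemma~\ref{t-length-lem} yields $\ell(zt) = \ell(z) + 2\delta(z,i,j) + 1 = \ell(z)+1$, as required.

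The main obstacle is ruling out case (B2), where $z(i)>z(j)$ instead forces $\ell(zt)=\ell(z)-2\delta(zt,i,j)-1\leq\ell(z)-1$, incompatible with $\ell(zt)=\ell(z)+1$. I plan to eliminate (B2) by showing $\ell(tzt)-\ell(z)\geq 4$ there, which would contradict $\ell(tzt)=\ell(z)+2$. By Proposition~\ref{length-t-conj-prop}, this reduces to proving $\delta(tz,i,j)-\delta(zt,i,j)\geq 2$. I would partition the indices $k\in(i,j)$ with $k\not\equiv i\modu{n})$ defining the deltas into those with $k\not\equiv j\modu{n})$ and those with $k\equiv j\modu{n})$. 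Since $m<0$ gives the containment $[zt(i),zt(j)]=[i-mn,j+mn]\subset[i+mn,j-mn]=[tz(i),tz(j)]$ and $tz(k)=zt(k)=z(k)$ for $k$ in the first class, that class contributes nonnegatively to $\delta(tz,i,j)-\delta(zt,i,j)$. For $k=j-pn$ with $1\leq p\leq q:=\lfloor(j-i)/n\rfloor$, a direct computation shows every such $p$ contributes to $\delta(tz,i,j)$ while only those with $p\leq q+2m$ contribute to $\delta(zt,i,j)$; the constraint $m>-(j-i)/(2n)$ ensures $q+2m\geq 0$, yielding a net contribution of exactly $-2m=2|m|\geq 2$. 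Summing, $\delta(tz,i,j)-\delta(zt,i,j)\geq 2$, completing the plan.
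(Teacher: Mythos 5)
Your proof is correct and follows the same overall case structure as the paper's: reduce to the three scenarios where Proposition~\ref{length-t-conj-prop} permits $\ell(tzt)\geq\ell(z)+2$, handle A1 and B1 by forcing $\delta(z,i,j)=\delta(tz,i,j)=0$, and derive a contradiction in B2 by showing $\ell(tzt)-\ell(z)\geq4$. The difference lies entirely in how B2 is eliminated. The paper works with the factorization $z=yt_{i,j+mn}$ (with $y$ fixing $\{i,j\}+n\ZZ$) used in the proof of Proposition~\ref{length-t-conj-prop}, invokes the containment $E(y,i,j+mn)\subset E(y,i+mn,j)$, and exhibits two explicit extra elements ($j-n$ and some $j'\equiv j$ in $(i+mn,i)$, distinct because $j-i>2|m|n\geq 2n$), concluding $\delta(y,i+mn,j)-\delta(y,i,j+mn)\geq2$. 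You instead work directly with $\delta(tz,i,j)-\delta(zt,i,j)$, partition the relevant indices by congruence class mod $n$, use the interval containment $(i-mn,j+mn)\subset(i+mn,j-mn)$ for the generic class, and exactly count the class $k\equiv j\pmod n$, finding a net contribution of precisely $-2m\geq2$. Both are valid; the paper's argument is a little shorter, while yours computes the exact deficit $-2m$ and is somewhat more systematic and self-contained, not needing the auxiliary permutation $y$. (One small remark: to justify that the length parity is right so that $\ell(tzt)>\ell(z)$ actually forces $\ell(tzt)\geq\ell(z)+2$ in B2, note that $\ell(tzt)$ and $\ell(z)$ have the same parity because $tzt$ and $z$ are conjugate; your computation gives $\ell(tzt)-\ell(z)=2(\delta(tz,i,j)-\delta(zt,i,j))$, which is even anyway.)
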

\begin{proof}
Assume $\ell(tzt)=\ell(z)+2$; then we have two cases. In the first case, if $z(i)\not\equiv j\modu{n})$, 
then we must have $z(i)<z(j)$ and $\delta(tz,i,j)=\delta(z,i,j)=0$, so the result is clear from Lemma~\ref{t-length-lem}.
For the second case, suppose $z(i)=j+mn$ and $m<0$. 
If $m<-\frac{j-i}{2n}$, then the situation is similar to the first case. Suppose $-\frac{j-i}{2n}<m<0$.
Following the proof of Proposition~\ref{length-t-conj-prop}, we argue that this case leads to a contradiction and therefore cannot occur.
 We have $j-n\in E(y,i+mn,j)$ but $j-n\not\in E(y,i,j+mn)$. Also there exists $j'\equiv j\modu{n})$ such that $i+mn<j'<i$. Then $j'\in E(y,i+mn,j)$ but $j'\not\in E(y,i,j+mn)$. So we have $\delta(y,i+mn,j)-\delta(y,i,j+mn)\ge2$ and $\ell(tzt)-\ell(z)\ge4$, which is impossible since $\ell(tzt)=\ell(z)+2$.
\end{proof}

Now we can introduce the analogues of codes and descents for elements of $\cF_n$.

\begin{definition}
The \emph{FPF-involution code} of $z \in \cF_n$ is the sequence $\cfpf(z)=(c_1,c_2,\dots,c_n)$
where $c_i$ is the number of integers $j \in \ZZ$ with $i<j$ and $z(i)>z(j)$ and $i>z(j)$.
\end{definition}

\begin{definition}
An integer $i \in \ZZ$ is an \emph{FPF-visible descent} of $z \in \cF_n$ if $\min\{i,z(i)\}>z(i+1)$.
Let \[\DesF(z)=\{s_i:i\in\ZZ\text{ is an FPF-visible descent of }z\}.\]
\end{definition}

\begin{lemma}\label{desF-lem}
For $z \in \cF_n$, we have $\DesF(z) =\DesR(\alpha^\fpf_{\min}(z))$ and $\cfpf(z) = c(\alpha^\fpf_{\min}(z))$.
\end{lemma}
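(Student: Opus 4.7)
The plan is to prove both assertions by direct computation using the explicit form of $w := \alpha^\fpf_{\min}(z) = [a_1, b_1, \ldots, a_l, b_l]^{-1}$, where $l = n/2$. From this expression one reads off that $w(a_p + mn) = 2p - 1 + mn$ and $w(b_p + mn) = 2p + mn$ for all $p \in [l]$ and $m \in \ZZ$.

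For the descent equality $\DesF(z) = \DesR(w)$, I would show directly that for each $i \in \ZZ$, the FPF-visible descent condition $\min\{i, z(i)\} > z(i+1)$ is equivalent to $w(i) > w(i+1)$. The case analysis is governed by the classification of $r_n(i)$ and $r_n(i+1)$ as being of \emph{type $a$} (belonging to $\{a_1, \ldots, a_l\}$) or \emph{type $b$} (belonging to $\{b_1, \ldots, b_l\}$), producing four cases. In the type $a$, type $a$ case, $z(i+1) > i+1 > i \geq \min\{i, z(i)\}$ rules out an FPF-visible descent, while the ordering of the $a_p$'s (with a short check at the edge $r_n(i) = n$) forces $w(i) < w(i+1)$. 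The type $b$, type $b$ case is symmetric, and in the two mixed cases both conditions reduce to the same arithmetic comparison of the cycle indices $p$ and $q$.

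For the code equality $\cfpf(z) = c(w)$, I plan to compute each entry directly. Fix $i \in [n]$. Whether $i = a_p$ or $i = b_p$, we have $\min\{i, z(i)\} = a_p$. Parametrize each $j > i$ as $j = c + mn$ with $c \in [n] = \{a_q\} \cup \{b_q\}$. The key point is that since $n = 2l$ and $|p - q| \le l - 1$ for $p, q \in [l]$, the relevant inequalities kill every contribution with $m \ne 0$. A brief calculation then yields $\cfpf(z)_{a_p} = \#\{q < p : b_q > a_p\}$ and $\cfpf(z)_{b_p} = \#\{q < p : b_q > b_p\}$. Running the exactly parallel calculation with $c(w)_i = \#\{j > i : w(j) < w(i)\}$ in place of $\cfpf(z)_i$ produces the same two quantities, so the codes agree entrywise.

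The main obstacle is the careful bookkeeping of affine wraparound. Strictly speaking, $j$ in both counts ranges over all integers $> i$ rather than just $[n]$, and $i$ in the descent statement ranges over $\ZZ$ rather than $[n-1]$, so the analysis must both rule out shifts $mn$ with $m \ne 0$ and handle edge configurations where $i$ or $i+1$ straddles a multiple of $n$. The central constraint that absorbs all these complications is the inequality $n = 2l > 2(l-1) \ge 2|p - q|$ for $p, q \in [l]$, which collapses every nonzero-shift contribution.
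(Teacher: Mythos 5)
Your approach — direct computation from the explicit window form of $\alpha^\fpf_{\min}(z)$, with a type-$a$/type-$b$ case analysis — is genuinely different from the paper's. The paper proves a single clean statement: for any integers $i<j$, the inversion condition $\alpha^\fpf_{\min}(z)(i) > \alpha^\fpf_{\min}(z)(j)$ is equivalent to ``$z(i)>z(j)$ and $i>z(j)$,'' by enumerating the three possible relative orderings of $z(j)<i$, $z(i)$, $j$. The code equality is then immediate by counting, and the descent equality follows by taking $j=i+1$. This sidesteps all shift bookkeeping.

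There is, however, a genuine gap in your plan. You write ``$[n] = \{a_q\}\cup\{b_q\}$'' and then claim that the bound $|p-q|\le l-1$ ``collapses every nonzero-shift contribution.'' Both statements fail in the affine setting. Unlike the finite case, $b_q = z(a_q)$ need not lie in $[n]$ — it can be arbitrarily large — so $[n]$ decomposes as $\{a_1,\dots,a_l\}\sqcup\{d_1,\dots,d_l\}$, not $\{a_q\}\cup\{b_q\}$. Consequently the nonzero-shift contributions do \emph{not} vanish. Take the paper's Example: $z = t_{1,6}t_{3,8}=[6,-3,8,-1]\in\cF_4$, with $(a_1,b_1)=(1,6)$ and $(a_2,b_2)=(3,8)$. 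Here $\cfpf(z)_1 = 2$ (contributed by $j=2,4$), but your formula $\#\{q<1 : b_q>a_1\}$ over $q\in[l]$ gives $0$. The missing terms correspond, in the periodic parametrization $b_{q-ml}=b_q-mn$, to indices $q\le 0$ — precisely the nonzero shifts you claim to rule out. Your constraint bounds the \emph{indices} $p,q\in[l]$, but the relevant quantities are the \emph{values} $b_q$, which are unbounded. If you instead let $q$ range over all of $\ZZ$, the formula $\cfpf(z)_{a_p}=\#\{q\in\ZZ : q<p,\ b_q>a_p\}$ is actually correct, and it does match $c(\alpha^\fpf_{\min}(z))_{a_p}$; but then you can no longer invoke a vanishing argument, and you would need the full matching of inversion pairs anyway — which is exactly what the paper's direct equivalence provides in one stroke.
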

\begin{proof}
Fix $z \in \cF_n$ and integers $i<j$. 
It is clear from the definition that $\alpha^\fpf_{\min}(z)(i) > \alpha^\fpf_{\min}(z)(j)$
if and only if either
$z(j)<i<z(i)<j$
or
$z(j)<i<j<z(i)$
or
$z(j)<z(i)<i<j$.
One of these cases occurs precisely when $z(i)>z(j)$ and $i>z(j)$.
We conclude that $\cfpf(z) = c(\alpha^\fpf_{\min}(z))$ and, taking $j=i+1$, that $\DesF(z) =\DesR(\alpha^\fpf_{\min}(z))$.
\end{proof}

\begin{corollary}\label{fcode-cor}
Suppose $z \in \cF_n$ and $\cfpf(z) = (c_1,c_2,\dots,c_n)$.
Then $\ellfpf(z)=c_1+c_2+\dots+c_n$, and an integer
$i\in\ZZ$ is an FPF-visible descent of $z$ 
if and only if $c_i>c_{i+1}$, interpreting indices modulo $n$.
\end{corollary}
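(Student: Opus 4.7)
The plan is to reduce the corollary entirely to the corresponding facts about ordinary codes and descents of affine permutations, using Lemma~\ref{desF-lem} as the bridge.

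First I would recall from the preliminaries that for any $\pi\in\tS_n$ with code $c(\pi)=(c_1,\dots,c_n)$, one has $|c(\pi)|=c_1+\cdots+c_n=\ell(\pi)$, and an integer $i$ is a descent of $\pi$ (equivalently $s_i\in\DesR(\pi)$) if and only if $c_i>c_{i+1}$, with indices read cyclically modulo $n$. Both of these are already recorded earlier in Section~\ref{pre-sect}, so I would cite them rather than reprove them.

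Next, by Lemma~\ref{desF-lem}, we have the two identifications
\[
\cfpf(z)=c(\alpha^\fpf_{\min}(z))\qquand \DesF(z)=\DesR(\alpha^\fpf_{\min}(z)).
\]
Theorem~\ref{length-alphamin-thm} moreover tells us that $\ellfpf(z)=\ell(\alpha^\fpf_{\min}(z))$, since $\alpha^\fpf_{\min}(z)\in\cAfpf(z)$. Combining these,
\[
c_1+c_2+\cdots+c_n=|c(\alpha^\fpf_{\min}(z))|=\ell(\alpha^\fpf_{\min}(z))=\ellfpf(z),
\]
which is the first assertion. For the second, an integer $i\in\ZZ$ is an FPF-visible descent of $z$ if and only if $s_i\in\DesF(z)=\DesR(\alpha^\fpf_{\min}(z))$, which by the standard descent-code characterization applied to $\alpha^\fpf_{\min}(z)$ holds if and only if the $i$th entry of $c(\alpha^\fpf_{\min}(z))=\cfpf(z)$ exceeds the $(i+1)$st entry cyclically, i.e. $c_i>c_{i+1}$ with indices taken modulo $n$.

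There is no real obstacle here: the content of the corollary is entirely inherited from the analogous statements for $\alpha^\fpf_{\min}(z)$ in $\tS_n$, and all the required transfers have been packaged into Lemma~\ref{desF-lem} and Theorem~\ref{length-alphamin-thm}. The only small subtlety worth mentioning explicitly is that the cyclic wraparound $c_{n+1}=c_1$ used in the descent characterization for $\tS_n$ is precisely what produces the ``indices modulo $n$'' proviso in the statement for $\cF_n$.
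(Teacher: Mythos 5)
Your proposal is correct and follows exactly the same route as the paper: the paper's proof simply cites Theorem~\ref{length-alphamin-thm} and Lemma~\ref{desF-lem} together with the code/descent facts recalled in Section~\ref{pre-sect}, which is precisely the chain of identifications you spell out.
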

\begin{proof}
This is immediate from Theorem~\ref{length-alphamin-thm} and Lemma~\ref{desF-lem} given the discussion in Section~\ref{pre-sect}.
\end{proof}

\begin{corollary}
For $z \in \cF_n$, the following are equivalent:
\ben
\item[(a)] $\DesF(z) \subset \{ s_n\}$.
\item[(b)] $\cfpf(z)$ is weakly increasing.
\item[(c)] $\alpha^\fpf_{\min}(z)^{-1}$ is Grassmannian. 
\een
\end{corollary}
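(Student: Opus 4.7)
The plan is to deduce all three equivalences at once by combining Lemma~\ref{desF-lem} and Corollary~\ref{fcode-cor} with the characterization of Grassmannian elements of $\tS_n$ recalled in Section~\ref{pre-sect}.

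Set $\alpha = \alpha^\fpf_{\min}(z)$ and $\pi = \alpha^{-1}$, so that condition (c) asserts $\pi$ is Grassmannian. From Section~\ref{pre-sect} we know that $\pi$ is Grassmannian if and only if $\DesL(\pi) \subset \{s_n\}$, which is the same as $\DesR(\pi^{-1}) = \DesR(\alpha) \subset \{s_n\}$, and also equivalent to $c(\pi^{-1}) = c(\alpha)$ being weakly increasing.

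Now apply Lemma~\ref{desF-lem}, which identifies $\DesF(z) = \DesR(\alpha)$ and $\cfpf(z) = c(\alpha)$. Substituting into the previous sentence, $\pi$ is Grassmannian if and only if $\DesF(z) \subset \{s_n\}$, and if and only if $\cfpf(z)$ is weakly increasing. This yields the equivalences (a) $\Leftrightarrow$ (c) and (b) $\Leftrightarrow$ (c), and hence all three.

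For completeness one may verify (a) $\Leftrightarrow$ (b) directly via Corollary~\ref{fcode-cor}: writing $\cfpf(z) = (c_1,\dots,c_n)$, that corollary says $s_i \in \DesF(z)$ iff $c_i > c_{i+1}$ with indices taken modulo $n$, so $\DesF(z) \subset \{s_n\}$ is exactly the condition $c_1 \le c_2 \le \dots \le c_n$. There is no real obstacle here; the whole argument is a bookkeeping exercise once Lemma~\ref{desF-lem} is in hand.
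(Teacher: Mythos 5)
Your proposal is correct and uses the same approach as the paper: the equivalence of (a) and (b) comes from Corollary~\ref{fcode-cor}, and the equivalence with (c) follows from Lemma~\ref{desF-lem} combined with the characterizations of Grassmannian elements in Section~\ref{pre-sect} ($\pi$ is Grassmannian iff $\DesL(\pi)\subset\{s_n\}$ iff $c(\pi^{-1})$ is weakly increasing). The paper phrases the Grassmannian condition directly as $\alpha^\fpf_{\min}(z)(1)<\cdots<\alpha^\fpf_{\min}(z)(n)$, but this is the same bookkeeping you carry out.
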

\begin{proof}
Parts (a) and (b) are equivalent because of \ref{fcode-cor}. The element $\alpha^\fpf_{\min}(z)^{-1}$ is Grassmannian if and only if $\alpha^\fpf_{\min}(z)(1)<\alpha^\fpf_{\min}(z)(2)<\cdots<\alpha^\fpf_{\min}(n)$, which is equivalent to $\DesF(z)=\DesR(\alpha^\fpf_{\min}(z))\subset \{ s_n\}$.
\end{proof}

Although this result makes it seem that the functions $\Ffpf_z$ for $z \in \cF_n$ with $\DesF(z) \subset \{s_n\}$ would be a good analogue of affine Schur functions, these power series do not even span $\ZZ\spanning\{ \Ffpf_y : y \in \cF_n\}$.

\begin{proposition}\label{fcode-prop}
If $i \in \ZZ$ is an FPF-visible descent of $z$ then 
$\cfpf(s_i  z  s_i) = (c_1,\dots,c_{i+1},c_i - 1,\dots, c_n)$
 interpreting indices modulo $n$.
\end{proposition}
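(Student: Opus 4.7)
The plan is to pass through Lemma~\ref{desF-lem} to reduce the claim to a statement about the ordinary codes of the atoms $\alpha^\fpf_{\min}(z)$ and $\alpha^\fpf_{\min}(s_izs_i)$, then invoke Proposition~\ref{alpha-conj-prop} to relate these two atoms, and finally apply the standard cyclic code transition~\eqref{ccc-eq} for affine permutations.

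First I would verify the hypotheses of Proposition~\ref{alpha-conj-prop}. Since $i$ is an FPF-visible descent, $\min\{i, z(i)\} > z(i+1)$ forces $z(i+1) < i$ and hence $z(i+1) < i+1$, so the proposition's first case (which requires $z(i+1) > i+1$) is excluded. I also need $s_izs_i \neq z$: this identity fails only if $z$ fixes $\{i,i+1\}$ pointwise (impossible since $z \in \cF_n$) or swaps them, but $z(i) = i+1$ would give $\min\{i, z(i)\} = i = z(i+1)$, violating the strict inequality in the definition of FPF-visible descent. Thus we land in the ``otherwise'' branch of Proposition~\ref{alpha-conj-prop} and obtain $\alpha^\fpf_{\min}(s_izs_i) = \alpha^\fpf_{\min}(z)\,s_i$.

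Second, Lemma~\ref{desF-lem} tells us that $\cfpf(z) = c(\alpha^\fpf_{\min}(z))$ and that $s_i \in \DesF(z) = \DesR(\alpha^\fpf_{\min}(z))$, so $i$ is a (length-reducing) right descent of $\alpha^\fpf_{\min}(z)$. The standard code-transition formula~\eqref{ccc-eq} at the descent $i$ then gives
\[
c(\alpha^\fpf_{\min}(z)\,s_i) = (c_1, \ldots, c_{i-1}, c_{i+1}, c_i - 1, c_{i+2}, \ldots, c_n),
\]
with indices interpreted cyclically modulo $n$. Applying Lemma~\ref{desF-lem} once more to the FPF-involution $s_izs_i$ identifies the left-hand side with $\cfpf(s_izs_i)$, which yields exactly the claimed formula.

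I do not anticipate a serious obstacle; the proof is essentially bookkeeping that strings together Proposition~\ref{alpha-conj-prop}, Lemma~\ref{desF-lem}, and~\eqref{ccc-eq}. The only subtlety is the case-check placing us in the correct branch of Proposition~\ref{alpha-conj-prop}, which is handled cleanly by the two strict inequalities packaged in the definition of FPF-visible descent.
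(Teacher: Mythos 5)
Your proof is correct and follows essentially the same route as the paper's: reduce via Lemma~\ref{desF-lem} to the atom $\alpha^\fpf_{\min}$, invoke Proposition~\ref{alpha-conj-prop} to identify $\alpha^\fpf_{\min}(s_izs_i)$ with $\alpha^\fpf_{\min}(z)\,s_i$, and conclude with the code transition~\eqref{ccc-eq}. Your explicit verification that the hypotheses of Proposition~\ref{alpha-conj-prop} hold (that $s_izs_i\neq z$ and that the first case is excluded) is a welcome bit of extra care that the paper leaves implicit.
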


\begin{proof}
Let $i$ be an FPF-visible descent of $z$, so that $i$ is a descent of $\alpha^\fpf_{\min}(z)$ by Lemma~\ref{desF-lem}.
By Proposition~\ref{alpha-conj-prop}, we have $\alpha^\fpf_{\min}(z) s_i =\alpha^\fpf_{\min}(s_i  z  s_i)$.
The result therefore follows from \eqref{ccc-eq} and Lemma~\ref{desF-lem}.
\end{proof}

\begin{corollary} 
The maps
$\cfpf : \cFp \to \NN^n - \PP^n$ and $\cfpf : \cFm \to \NN^n - \PP^n$ are injective.
\end{corollary}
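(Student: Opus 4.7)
The plan is to reduce the injectivity claim to the well-known fact that the code map $c : \tS_n \to \NN^n \setminus \PP^n$ is a bijection (this is the standard parametrization of affine permutations by their codes, and on tuples in $\NN^n$ with at least one zero entry, $c$ has an explicit inverse; it underlies the recursion \eqref{ccc-eq} used throughout Section~\ref{pre-sect}).

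First I would invoke Lemma~\ref{desF-lem}, which gives the crucial identity $\cfpf(z) = c(\alpha^\fpf_{\min}(z))$. This converts the question into one about the injectivity of the map $z \mapsto \alpha^\fpf_{\min}(z)$ on $\cFp$ and on $\cFm$ separately. Since the ordinary code $c$ is injective on $\tS_n$, the sequence $\cfpf(z)$ determines $\alpha^\fpf_{\min}(z)$ uniquely as an affine permutation.

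Next I would recover $z$ from $\alpha^\fpf_{\min}(z)$. By definition of $\cAfpf(z)$, if $z \in \cFp$ then $\alpha^\fpf_{\min}(z)^{-1}\,\Theta^+\,\alpha^\fpf_{\min}(z) = z$, and if $z \in \cFm$ then $\alpha^\fpf_{\min}(z)^{-1}\,\Theta^-\,\alpha^\fpf_{\min}(z) = z$ (as verified in the proof of Theorem~\ref{length-alphamin-thm}). Since we have fixed the sign $\sfpf(z)$ by restricting to $\cFp$ or $\cFm$, the target involution $\Theta^\pm$ is determined, so $z$ is recovered from $\alpha^\fpf_{\min}(z)$ by a single conjugation. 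Combining the two recovery steps gives injectivity on each of $\cFp$ and $\cFm$.

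There is essentially no obstacle here: the corollary is a direct consequence of Lemma~\ref{desF-lem} together with the elementary fact that an element of $\tS_n$ is determined by its code. The only minor point to note is that the conjugating element $\Theta$ must be fixed, which is exactly why the statement is phrased as two separate injectivity statements rather than a single injection from $\cF_n$; the map $\cfpf$ itself cannot distinguish $z$ from its image under the outer automorphism swapping $\Theta^+$ and $\Theta^-$ discussed after Proposition~\ref{transform-thm}.
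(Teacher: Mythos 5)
Your argument is correct, but it takes a genuinely different route from the one in the paper. The paper's proof is a short induction on $\ellfpf(z) = |\cfpf(z)|$: by Corollary~\ref{fcode-cor}, a nonzero code has a descent $c_i > c_{i+1}$ (cyclically), so any two elements of $\cFp$ with the same code share that FPF-visible descent $i$; conjugating both by $s_i$ and applying Proposition~\ref{fcode-prop} yields two shorter elements with equal codes, and induction finishes. You instead bypass the recursion entirely by invoking Lemma~\ref{desF-lem} to write $\cfpf(z) = c(\alpha^\fpf_{\min}(z))$, then appealing to the classical fact that the code map $c : \tS_n \to \NN^n \setminus \PP^n$ is a bijection, and finally recovering $z = \alpha^\fpf_{\min}(z)^{-1}\,\Theta^\pm\,\alpha^\fpf_{\min}(z)$ from the atom. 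Your route is shorter and conceptually transparent, at the cost of citing a standard result (bijectivity of $c$) that the paper never states explicitly; the paper's inductive argument is more self-contained in the sense that it uses only the two local results it has just proved.

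One small inaccuracy in your closing aside: the automorphism $\tau$ with $\tau(s_i)=s_{i+1}$ does swap $\Theta^+$ and $\Theta^-$, but it cyclically shifts FPF-codes rather than fixing them (this follows from the fact that $\tau$ shifts windows), so $\cfpf$ does generally distinguish $z$ from $\tau(z)$. The genuine reason the corollary must split into two cases is simpler: $\cfpf(\Theta^+) = \cfpf(\Theta^-) = (0,\dots,0)$, and more broadly the target $\Theta$ is needed to reconstruct $z$ from its minimal atom, exactly as in your third step. This does not affect the validity of your proof.
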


\begin{proof}
This follows by induction from Corollary~\ref{fcode-cor} and Proposition~\ref{fcode-prop}.
\end{proof}

\begin{definition}
The \emph{FPF-involution shape} $\nu(z)$ of $z \in \cF_n$ is the transpose of the partition sorting $\cfpf(z)$.
\end{definition}

Recall the partitions $\lambda(\pi)$ and $\lambda'(\pi)=\lambda(\pi^*)$ given in Section~\ref{pre-sect}.

\begin{lemma}\label{lambda-nu-lem}
If $z \in \cF_n$, then $\nu(z)=\lambda(\alpha^\fpf_{\max}(z))=\lambda(\alpha^\fpf_{\min}(z)^{-1})$. Moreover, $\nu(z)^*=\lambda'(\alpha^\fpf_{\min}(z)).$
\end{lemma}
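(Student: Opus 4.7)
The first equality $\nu(z)=\lambda(\alpha^\fpf_{\min}(z)^{-1})$ is immediate: by Lemma~\ref{desF-lem}, $\cfpf(z)=c(\alpha^\fpf_{\min}(z))$, so both $\nu(z)$ and $\lambda(\alpha^\fpf_{\min}(z)^{-1})$ are by definition the transpose of the partition that sorts this common sequence. The last identity $\nu(z)^*=\lambda'(\alpha^\fpf_{\min}(z))$ then follows by applying the defining relation $\lambda'(\pi)=\lambda(\pi^{-1})^*$ to $\pi=\alpha^\fpf_{\min}(z)$ and substituting the first equality.

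The middle equality $\nu(z)=\lambda(\alpha^\fpf_{\max}(z))$ is the substantive step: it is equivalent to showing that $c(\alpha^\fpf_{\min}(z))$ and $c(\alpha^\fpf_{\max}(z)^{-1})$ sort to the same partition. My plan is to exploit the diagram automorphism $\tau\colon s_i\mapsto s_{i+1}$ of $\tS_n$. A direct calculation shows that $\tau$ acts on windows by $\tau(w)(i)=w(i-1)+1$ and therefore on codes by $c(\tau(w))_i=c(w)_{i-1}$; in particular, $\tau$ cyclically shifts the code, which preserves the sorted partition. Computational evidence suggests the stronger statement that $c(\alpha^\fpf_{\max}(z)^{-1})$ is itself a cyclic shift of $c(\alpha^\fpf_{\min}(z))$ by an amount depending on $z$, which would suffice to conclude.

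To establish this cyclic-shift identity, I would use Theorem~\ref{atom-thm} to fix a maximal chain $\alpha^\fpf_{\min}(z)=\pi_0\prec\pi_1\prec\cdots\prec\pi_N=\alpha^\fpf_{\max}(z)$ in the atom poset and then track how $c(\pi_k^{-1})$ evolves under each atom move. One checks that the move $[\ldots,a,d,b,c,\ldots]^{-1}\prec[\ldots,b,c,a,d,\ldots]^{-1}$ corresponds to right-multiplying $\pi_k$ by $t_{ab}t_{cd}$, and computes the resulting local change to the code of the inverse. The main obstacle will be proving that the accumulated effect along the whole chain is exactly a cyclic shift, with shift amount given by an intrinsic combinatorial invariant of $z$ (for instance, counting the cycles of $z$ whose ``large representative'' lies outside $[n]$); this telescoping argument appears to require careful bookkeeping and is the substantive technical content of the proof.
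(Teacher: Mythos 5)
Your first and last equalities are fine: $\nu(z)=\lambda(\alpha^\fpf_{\min}(z)^{-1})$ does follow immediately from Lemma~\ref{desF-lem}, and $\nu(z)^*=\lambda'(\alpha^\fpf_{\min}(z))$ is just the definition of $\lambda'$. The problem is the middle equality, which you correctly identify as the substantive step but do not actually prove, and the strategy you outline cannot work because its key intermediate claim is false. You assert that $c(\alpha^\fpf_{\max}(z)^{-1})$ is a cyclic shift of $c(\alpha^\fpf_{\min}(z))$; this fails already for $n=6$. Take $z=t_{1,4}t_{2,6}t_{3,5}=[4,6,5,1,3,2]\in\cF_6$. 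Then $\alpha^\fpf_{\min}(z)=[1,4,2,6,3,5]^{-1}=[1,3,5,2,6,4]$ with $c(\alpha^\fpf_{\min}(z))=(0,1,2,0,1,0)$, while $\alpha^\fpf_{\max}(z)^{-1}=[1,4,3,5,2,6]$ with $c(\alpha^\fpf_{\max}(z)^{-1})=(0,2,1,1,0,0)$. Both sort to $(2,1,1,0,0,0)$, consistent with the lemma, but the second is not any cyclic rotation of the first. Your observation that the diagram automorphism $\tau\colon s_i\mapsto s_{i+1}$ cyclically shifts codes is correct but irrelevant here, since $\alpha^\fpf_{\max}(z)^{-1}$ is not obtained from $\alpha^\fpf_{\min}(z)$ by applying $\tau$. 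Consequently, the proposed chain-tracking argument is aimed at proving something false, and even weakened to ``sorts to the same partition'' it has not been carried out; you say yourself the telescoping argument ``requires careful bookkeeping'' and leave it undone.

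For comparison, the paper's proof of the middle equality does not attempt any cyclic-shift relation. Setting $\pi=\alpha^\fpf_{\max}(z)^{-1}$ and $\tau=\alpha^\fpf_{\min}(z)$, it uses \eqref{ccc-eq} to relate the codes of $\pi$ and $\pi\Theta$, and of $\tau$ and $\Theta\tau$, then observes that the subsequence $\{c_{2i-1}(\pi)\}_i$ equals $\{c_{b_i}(\tau)\}_i$ as multisets (both count nesting patterns of the arcs of $z$), thereby reducing the claim to showing that $c(\pi\Theta)$ and $c(\tau\Theta)$ sort to the same partition, which is exactly Lemma~3.19 of \cite{MZ}. If you want to avoid citing that external lemma, the right approach is still to compare the two codes as multisets via the arc combinatorics of $z$, not to look for a rotation relating them.
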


\begin{proof}
If $z \in \cF_n$, then $\nu(z)=\lambda(\alpha^\fpf_{\min}(z)^{-1})$ holds by Lemma~\ref{desF-lem}, so we just need to show 
that $\lambda(\alpha^\fpf_{\max}(z))=\lambda(\alpha^\fpf_{\min}(z)^{-1})$, or equivalently that sorting $c(\alpha^\fpf_{\max}(z)^{-1})$  gives same thing as sorting $c(\alpha^\fpf_{\min}(z))$.

Let $l=\frac{n}{2}$, $\pi=\alpha^\fpf_{\max}(z)^{-1}$, and $\tau=\alpha^\fpf_{\min}(z)$. 
We assume $z \in \cFp$ and set $\Theta = \Theta^+$; the argument for the case when $z \in \cFm$ is similar.
Using \eqref{ccc-eq}, we compute that 
\[c_{2i}(\pi)=c_{2i-1}(\pi\Theta)\text{ and } c_{2i-1}(\pi)=c_{2i}(\pi\Theta)+1\quad\text{while}\quad
c_{a_i}(\tau)=c_{a_i}(\Theta\tau)+1\text{ and } c_{b_i}(\tau)=c_{b_i}(\Theta\tau)\] 
where $a_i < b_i = z(a_i)$ are as in \eqref{amin-def}. Let $f_1<f_2<\dots<f_l$ be the numbers $f \in [n]$ with $z(f) < f$ and set $e_i=z(f_i)$.
Then $c_{2i-1}(\pi)=\#\{j>i:e_i<e_j\}=\#\{j:e_j<e_i<f_i<f_j\}$ and $c_{b_i}(\tau)=\#\{j<i:b_i<b_j\}=\#\{j:a_j<a_i<b_i<b_j\}$, which means that $\{c_{2i-1}(\pi)\}_{i=1}^{l}=\{c_{b_i}(\tau)\}_{i=1}^{l}$.  
Thus, it suffices to show that 
sorting $c(\pi\Theta)$ gives same thing as sorting $c(\tau \Theta)$. This claim is equivalent to \cite[Lemma 3.19]{MZ}.
%
%
%
%
\end{proof}

\begin{theorem}\label{i<-thm}
Suppose $n$ is positive even integer and $z \in \cF_n$. The following properties then hold:
\ben
\item[(a)] $\Ffpf_z \in m_{\nu(z)} + \sum_{\mu < \nu(z)} \NN m_\nu \subset \Sym^{(n)}.$
\item[(b)] $ \Ffpf_z \in F_{\nu(z)} + \sum_{\mu < \nu(z)} \NN F_\nu.$
\item[(c)] $ \Ffpf_z \in  F_{\nu(z)^*} + \sum_{ \nu(z)^* <^* \mu} \NN F_\nu.$
\een
Here, the symbol $<$ denotes the dominance order on partitions and $\lambda <^* \mu$ if and only if $\mu^* < \lambda^*$.
\end{theorem}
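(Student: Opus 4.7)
The plan is to combine the decomposition $\Ffpf_z = \sum_{\pi \in \cAfpf(z)} \tF_\pi$ with the triangular expansions for ordinary affine Stanley symmetric functions from Theorems~\ref{uni-thm} and~\ref{schur-thm}, and then control the shapes of the individual atoms by exploiting the lattice structure from Theorem~\ref{atom-thm}. By Lemma~\ref{lambda-nu-lem} we have $\nu(z) = \lambda(\alpha^\fpf_{\max}(z)) = \lambda(\alpha^\fpf_{\min}(z)^{-1})$ and $\nu(z)^* = \lambda'(\alpha^\fpf_{\min}(z))$. Using the identity $\lambda'(\pi)^* = \lambda(\pi^{-1})$, parts (a) and (b) reduce to the assertion that $\lambda(\pi) \le \nu(z)$ for every $\pi \in \cAfpf(z)$, with equality iff $\pi = \alpha^\fpf_{\max}(z)$, while part (c) reduces to the assertion that $\lambda(\pi^{-1}) \le \nu(z)$ for every $\pi \in \cAfpf(z)$, with equality iff $\pi = \alpha^\fpf_{\min}(z)$. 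Granted these two assertions, the coefficients of $m_{\nu(z)}$, $\tF_{\nu(z)}$, and $\tF_{\nu(z)^*}$ in $\Ffpf_z$ are each $1$ — contributed by a unique extremal atom — and all remaining terms are indexed by partitions strictly smaller in the relevant order.

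To prove these assertions, I would induct on the rank in the lattice $(\cAfpf(z), \prec)$ of Theorem~\ref{atom-thm}, which reduces to verifying the strict inequalities $\lambda(u) < \lambda(v)$ and $\lambda(u^{-1}) > \lambda(v^{-1})$ along every single covering relation $u \prec v$, where $u = [\cdots, a, d, b, c, \cdots]^{-1}$ and $v = [\cdots, b, c, a, d, \cdots]^{-1}$ with $a < b < c < d$. For the first inequality, a direct computation shows that $c(u^{-1})$ and $c(v^{-1})$ agree outside the four positions $i, i+1, i+2, i+3$, and at those positions take the values $(N_a,\, 2 + N_d,\, N_b,\, N_c)$ and $(1 + N_b,\, 1 + N_c,\, N_a,\, N_d)$ respectively, where $N_x := \#\{j > i+3 : u^{-1}(j) < x\}$ and $N_a \le N_b \le N_c \le N_d$. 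A short case analysis shows that the first multiset, sorted weakly decreasingly, strictly dominates the second. Since dominance of multisets of equal weight is preserved (and strictness transferred) under merging with a common multiset — via the additive characterization $A \ge B \iff \sum_j \max(A_j - t, 0) \ge \sum_j \max(B_j - t, 0)$ for all $t$ — the sorted version of $c(u^{-1})$ strictly dominates that of $c(v^{-1})$, and transposing reverses dominance to yield $\lambda(u) < \lambda(v)$.

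The main obstacle is the parallel inequality $\lambda(u^{-1}) > \lambda(v^{-1})$ needed for part (c), for which one must instead compute $c(u)$ and $c(v)$. These codes depend on the values of $u$ and $v$ at the integer arguments $a, b, c, d$ rather than at four consecutive positions, and the analysis must contend with affine wraparound when $a, b, c, d$ straddle a multiple of $n$. My plan here is to first reindex the window so that the four positions $i, i+1, i+2, i+3$ lie in a single fundamental domain of size $n$, then perform a direct analog of the preceding code computation and invoke the same additive dominance characterization to globalize the resulting local comparison. With both inequalities established along each covering relation, induction along $(\cAfpf(z), \prec)$ completes the proof of the claim, and hence of all three parts of the theorem.
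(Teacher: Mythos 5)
Your reduction is correct and mirrors the paper's: by Lemma~\ref{lambda-nu-lem}, parts (a) and (b) reduce to showing $\lambda(\pi)\le\nu(z)$ for all $\pi\in\cAfpf(z)$ with equality only at $\alpha^\fpf_{\max}(z)$, and part (c) reduces to $\lambda(\pi^{-1})\le\nu(z)$ with equality only at $\alpha^\fpf_{\min}(z)$. Where you diverge from the paper is in how these inequalities get established: the paper simply invokes \cite[Lemma 3.20]{MZ} (via a reduction in the spirit of the proof of Lemma~\ref{lambda-nu-lem}), while you attempt a self-contained argument by inducting along covering relations $u\prec v$ in the lattice $(\cAfpf(z),\prec)$ of Theorem~\ref{atom-thm}. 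That is a genuinely different and potentially more illuminating route.

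Your treatment of the first inequality $\lambda(u)<\lambda(v)$ is essentially correct. The claim that $c(u^{-1})$ and $c(v^{-1})$ agree away from the four positions $i,i+1,i+2,i+3$ is true, but it deserves a sentence of justification: for $j>i+3$ the entries of $u^{-1}$ and $v^{-1}$ do disagree at the shifted positions $i+mn,\dots,i+3+mn$ ($m\ge1$), but the two quadruples $(a+mn,d+mn,b+mn,c+mn)$ and $(b+mn,c+mn,a+mn,d+mn)$ are the same as multisets, so $N_x:=\#\{j>i+3:u^{-1}(j)<x\}$ is the same for both. Given that, your local quadruples $(N_a,\,2+N_d,\,N_b,\,N_c)$ and $(1+N_b,\,1+N_c,\,N_a,\,N_d)$ are computed correctly; the first strictly dominates the second already at the first partial sum, since $N_d+2>\max(N_d,\,1+N_c,\,1+N_b,\,N_a)$ because $N_a\le N_b\le N_c\le N_d$. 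The merging step via the characterization $A\ge B\iff\sum_j\max(A_j-t,0)\ge\sum_j\max(B_j-t,0)$ for all $t$ (valid when $|A|=|B|$) is also correct and preserves strictness.

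The gap is in part (c). The codes $c(u)$ and $c(v)$ agree at positions $k$ with $k\not\equiv a,b,c,d\pmod n$ (this requires an argument: the contributions from $l\in\{a,b,c,d\}+n\ZZ$ to $c_k(u)$ and $c_k(v)$ are equal because $u(k)=v(k)$ is never congruent to $i,\dots,i+3$ when $k\not\equiv a,b,c,d$, which makes the relevant half-infinite intervals coincide). But the positions where they differ are $r_n(a),r_n(b),r_n(c),r_n(d)$, which are generally neither consecutive nor in any controllable order, and the corresponding code entries are $c_{r_n(x)}(u)=\#\{l>r_n(x):u(l)<i_x-m_xn\}$, where $i_x\in\{i,\dots,i+3\}$ is the image of $x$ and $m_x$ is determined by $x=r_n(x)+m_xn$. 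These entries involve four a priori unrelated shifts $m_a,m_b,m_c,m_d$, so they are not a clean perturbation of a common family of quantities like the $N_x$ in your part (a) computation. Your proposed fix, ``reindex the window so that the four positions $i,i+1,i+2,i+3$ lie in a single fundamental domain,'' does not address this: those positions are already consecutive, and no reindexing can make the residues $r_n(a),r_n(b),r_n(c),r_n(d)$ consecutive, since they are dictated by the values $a,b,c,d$ which may span many fundamental domains. So as written, the ``direct analog of the preceding code computation'' is not available, and the inequality $\lambda(u^{-1})>\lambda(v^{-1})$ remains unproved. You either need a substantially more careful case analysis here, or you should fall back to the paper's route and verify that \cite[Lemma 3.20]{MZ}, transported through the same correspondence used in the proof of Lemma~\ref{lambda-nu-lem}, gives exactly the statement you need.
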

\begin{proof}
Given Lemmas~\ref{lambda-nu-lem} and \cite[Lemma 3.20]{MZ},
the result is immediate from Theorems~\ref{uni-thm} and \ref{schur-thm}.
\end{proof}

\begin{example}
Again let $z=t_{1,6}t_{3,8}=[6,-3,8,-1]\in\cF_4$, so that we have $\alpha^\fpf_{\min}(z)=[3,0,5,2]=\alpha^\fpf_{\max}(z)=[1,6,3,8]^{-1}$. Then $\cfpf(z)=c(\alpha^\fpf_{\min}(z))=(2,0,2,0)$ and $c(\alpha^\fpf_{\max}(z)^{-1})=(0,2,0,2)$ so
\[
\nu(z) = \lambda(\alpha_{\max}(z))=\lambda(\alpha_{\min}(z)^{-1})=(2,2).
\]
\end{example}

The affine FPF involution code and shape are analogues of the affine involution code and shape in \cite{MZ}. As noted there, the images of the code and shape functions are still unknown but they have an interesting image when restricted to the finite subgroup $S_n \subset\tS_n$. Analogously, it is still an open problem to characterize the images of the maps $\cfpf : \cF_n \to \NN^n - \PP^n$ and $\nu : \cF_n \to \Par^n$.


\begin{problem}
What is the image of $\nu : \cF_n \to \Par^n$?
\end{problem}


Our definition of the functions $\Ffpf_z$ is given in terms of a
  right-handed version of FPF-atoms, although there exists an equally natural left-handed version.
Interesting, this convention seems to be irrelevant:

\begin{conjecture}\label{inv-cong-conj}
If $z \in \cF_n$ then $\Ffpf_z = \sum_{\pi \in \cAfpf(z)} \tF_{\pi^{-1}} \in \Sym^{(n)}.$
\end{conjecture}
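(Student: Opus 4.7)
My plan is to reformulate the conjecture using the star automorphism and then attempt induction on $\ellfpf(z)$ using the transition formula established in Section~\ref{trans-sect}.

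\textbf{Reformulation.} Let $\tau : \tS_n \to \tS_n$ denote the automorphism defined by $\tau(s_i) = s_{n-i}$, so $\tau(\pi) = \pi^*$. By Theorem~\ref{inv-cong-thm}, $\omega^+(\tF_\pi) = \tF_{\pi^*} = \tF_{\pi^{-1}}$, and by Proposition~\ref{transform-thm} applied to $\tau$, we have $\cAfpf(z^*) = \{\pi^* : \pi \in \cAfpf(z)\}$. Combining these,
\[
\sum_{\pi \in \cAfpf(z)} \tF_{\pi^{-1}} \;=\; \sum_{\pi \in \cAfpf(z)} \tF_{\pi^*} \;=\; \sum_{\sigma \in \cAfpf(z^*)} \tF_\sigma \;=\; \Ffpf_{z^*}.
\]
Thus Conjecture~\ref{inv-cong-conj} is equivalent to the assertion that $\Ffpf_z = \Ffpf_{z^*}$ for every $z \in \cF_n$, i.e.\ that the assignment $z \mapsto \Ffpf_z$ is invariant under the star automorphism (the remark after Proposition~\ref{transform-thm} already gives invariance under the cyclic automorphism $s_i \mapsto s_{i+1}$, so this would complete invariance under the full automorphism group of the affine Dynkin diagram).

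\textbf{Induction.} I would prove $\Ffpf_z = \Ffpf_{z^*}$ by induction on $\ellfpf(z)$. The base case $\ellfpf(z) = 0$ forces $z \in \{\Theta^\pm\}$; since the generators appearing in the defining products of $\Theta^\pm$ mutually commute and are permuted by $\tau$, we have $\tau(\Theta^\pm) = \Theta^\pm$, so the base case is immediate. For the inductive step, apply $\tau$ to both sides of the affine FPF transition formula from Section~\ref{trans-sect}. That formula expresses $\Ffpf_z$ (possibly after multiplication by an appropriate factor) as an explicit $\NN$-linear combination of $\Ffpf_y$'s attached to elements $y \in \cF_n$ built from $z$ by conjugation by specific reflections $t_{ij}$. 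Since $\tau$ preserves the generating set $\{s_1,\dots,s_n\}$, hence the reflection set, and is length-preserving, applying $\tau$ turns the transition formula for $z$ into a formula for $z^*$ in which each $\Ffpf_y$ on the right-hand side is replaced by $\Ffpf_{\tau(y)}$. By the inductive hypothesis $\Ffpf_{\tau(y)} = \Ffpf_y$ for each $y$ of smaller FPF-length, and we conclude $\Ffpf_{z^*} = \Ffpf_z$.

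\textbf{Main obstacle.} The delicate point is verifying that the Section~\ref{trans-sect} transition formula is genuinely $\tau$-equivariant. Transition formulas are usually stated by fixing a distinguished index (for instance, the maximum FPF-visible descent); applying $\tau$ reindexes this choice from $r$ to $n-r$, and one must check that the resulting identity is itself an instance of the same recursion at the (now-transformed) distinguished index, rather than a genuinely different statement. I would try to resolve this either (i) by showing that the transition formula holds for \emph{every} admissible choice of distinguished index, in which case $\tau$-equivariance is immediate, or (ii) by combining the formulas obtained at $r$ and $n-r$ to cancel the asymmetry. A secondary approach, should induction prove too brittle, is to exploit the quasiparabolic structure on $\cF_n$ from Section~\ref{qp-sect}: the Iwahori--Hecke module on $\cF_n$ carries a bar involution, and if a $\tau$-equivariant canonical basis (or $W$-graph) can be produced, the desired invariance should descend to the associated Stanley symmetric functions.
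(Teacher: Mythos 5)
This statement is a \emph{conjecture} in the paper, not a theorem: the authors offer no proof, only the remark that it is equivalent to the identity $\omega^+(\Ffpf_z) = \Ffpf_z$, and they verify it only when $z$ lies in the finite subgroup $S_n$ (via Schur $P$-positivity). Your reformulation step is correct and matches the paper's observation: combining Theorem~\ref{inv-cong-thm} and Proposition~\ref{transform-thm} (applied to $\tau = *$), the conjecture is indeed equivalent to $\Ffpf_z = \Ffpf_{z^*}$ for all $z \in \cF_n$, and the base case $\tau(\Theta^\pm) = \Theta^\pm$ is fine.

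The inductive step, however, has a fatal gap: you are misreading what the transition formula says. Equation~\eqref{id11-eq} reads $\sum_{z \in \Pi^-(y,p)} \Ffpf_z = \sum_{z \in \Pi^+(y,q)} \Ffpf_z$, and every element of $\Pi^\pm(y,\cdot)$ is a cover of $y$, hence has FPF-length $\ellfpf(y)+1$. Both sides of the identity are linear combinations of $\Ffpf_{z'}$'s at the \emph{same} FPF-length as $z$, not at smaller length. The formula never ``expresses $\Ffpf_z$ as an explicit $\NN$-linear combination of $\Ffpf_y$'s'' with $\ellfpf(y) < \ellfpf(z)$; it equates two sums of equal-length terms. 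Consequently there is no induction on $\ellfpf$ available here. Even if one isolates $\Ffpf_z$ by choosing $(y,p,q)$ so that, say, $\Pi^-(y,p) = \{z\}$, the resulting expression writes $\Ffpf_z$ in terms of \emph{other} length-$\ellfpf(z)$ elements, and one would then need a secondary ordering in which that rewriting process terminates. As the introduction of the paper explicitly notes (in the context of Lam--Shimozono's Theorem~\ref{trans-thm}), termination of the affine transition recursion is itself an open problem, so this route cannot be declared to close. Your worry about $\tau$-equivariance of the transition formula is actually the least of the difficulties (the formula does hold for every admissible pair $p<q=y(p)$, so it is $\tau$-equivariant as a family); the real obstruction is that there is simply no descending recursion to induct on. The fallback appeal to the Hecke-module bar involution and canonical bases is also unsubstantiated: nothing in Section~\ref{qp-sect} connects the canonical bases $\underline{M}_x, \underline{N}_x$ to the symmetric functions $\Ffpf_z$ in a way that would transfer a $\tau$-symmetry.
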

By Theorem~\ref{inv-cong-thm}, this conjecture is equivalent to the identity $\omega^+(\Ffpf_z) = \Ffpf_z$.

In \cite{MZ}, we studied another family of symmetric functions $\iF_z$ indexed by all involutions $z \in \tI_n$ in the affine symmetric group, generalizing constructions of Hamaker, Marberg, and Pawlowski from \cite{HMP1,HMP4}.
Let 
\[\Omega_n= \ZZ\spanning\left\{\iF_z : z \in \tI_n \cap S_n\right\} 
\qquand
\tilde\Omega_n = \ZZ\spanning\left\{\iF_z : z \in \tI_n \right\}.\]
For each even positive integer $n$, similarly define
\[\Omega^\fpf_n= \ZZ\spanning\left\{\Ffpf_z : z \in \cF_n \cap S_n\right\} 
\qquand
\tilde\Omega^\fpf_n = \ZZ\spanning\left\{\Ffpf_z : z \in \cF_n \right\}.\]

It follows from results in \cite{HMP4} that $\Omega_n$ is exactly the $\ZZ$-span of the Schur $P$-functions $P_\lambda$ where $\lambda$ ranges over all strict partitions contained in the partition $(n-1,n-3,n-5,\ldots)$.
Also, each $P_\lambda$ in this basis is equal to $\iF_z$ for some $z=z^{-1}$ in $S_n$. Moreover, every $\iF_z$ for $z=z^{-1}$ in $S_n$ is a nonnegative integer combination of the Schur $P$-function basis.
It follows from results in \cite{HMP5} that if $n$ is even then $\Omega^\fpf_n=\Omega_{n-1}$,
and that every $\Ffpf_z$ is also a nonnegative integer combination of Schur $P$-functions.

The map $\omega^+$ restricts on $\Omega_n$ and $\Omega^\fpf_n$ to the better-known involution $\omega$ that sends $s_\lambda$ to $s_{\lambda^T}$ for all partitions $\lambda$. This map fixes every Schur $P$-function and therefore also every $\iF_z$ and $\Ffpf_z$ for $z$ in the finite group $S_n$. So Conjecture~\ref{inv-cong-conj} is at least true if the index $z$ is in $S_n \subsetneq \tS_n$.

There are many open problems related to the spaces $\tilde\Omega_n$ and $\tilde\Omega^\fpf_n$. First, in general, $\Ffpf_z$ for $z \in \cF_n$ is not a linear combination of Schur $P$-functions. Second, at least for $n=6,8$, and probably for all $n\ge 6$, one can check by computer that no subset of $\{\Ffpf_z:z\in\cF_n\}$ gives a basis for $\tilde\Omega_n^\fpf$ that is ``positive" in sense that each $\Ffpf_z$ is positive integer combination of basis elements. Thus, it is an open problem to characterize $\tilde{\Omega}_n$ and $\tilde{\Omega}^\fpf_n$ and to find good bases for these submodules.

Assume $n$ is a positive integer. Computer calculations suggest the following conjectures:

\begin{conjecture}\label{omega1-conj}
If $n$ is even then $\tilde{\Omega}_n=\tilde{\Omega}^\fpf_n$.
\end{conjecture}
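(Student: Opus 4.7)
The plan is to reduce the conjectured equality $\tilde{\Omega}_n = \tilde{\Omega}_n^\fpf$ to an equality of leading-term shape sets, and then to attempt this shape identification either combinatorially or via the affine transition formulas.

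By Theorem~\ref{i<-thm}(a), each $\Ffpf_z$ admits the unitriangular expansion $\Ffpf_z = m_{\nu(z)} + \sum_{\lambda < \nu(z)} c^z_\lambda\, m_\lambda$ with $c^z_\lambda \in \NN$, and the analogous result from \cite{MZ} gives $\iF_z = m_{\mu(z)} + \sum_{\lambda < \mu(z)} d^z_\lambda\, m_\lambda$ with $d^z_\lambda \in \NN$, where $\mu(z)$ denotes the affine involution shape defined in \cite{MZ}. A standard unitriangularity argument then yields
\[
\tilde{\Omega}_n^\fpf = \ZZ\spanning\{m_\lambda : \lambda \in \nu(\cF_n)\} \qquand \tilde{\Omega}_n = \ZZ\spanning\{m_\lambda : \lambda \in \mu(\tI_n)\},
\]
so the conjecture is equivalent to the combinatorial identity $\nu(\cF_n) = \mu(\tI_n)$ in $\Par^n$.

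The inclusion $\nu(\cF_n) \subset \mu(\tI_n)$ should reduce to showing $\nu(z) = \mu(z)$ for $z \in \cF_n \subset \tI_n$, verified by comparing the definition of $\cfpf$ with the involution code in \cite{MZ} on an element with no fixed points. For the reverse inclusion, the plan is to construct, for each $z \in \tI_n$ with fixed-point set $F(z)$, an FPF completion $\hat z \in \cF_n$ obtained by adjoining a canonical FPF pairing on $F(z)$. Because $n$ is even and $F(z)$ is $n$-periodic, the number of fixed points in any window is even, so such a pairing exists; the natural first attempt pairs consecutive fixed points in a window. The central task is then to choose the pairing so that $\nu(\hat z) = \mu(z)$.

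The main obstacle is precisely this shape-preservation step. Since both image sets $\nu(\cF_n)$ and $\mu(\tI_n)$ are themselves open problems (as highlighted just above this conjecture), neither can be handled by direct enumeration, and the naive ``pair consecutive fixed points'' recipe does not obviously preserve the shape. As a parallel strategy I would attempt an algebraic approach based on the affine FPF transition formula in Section~\ref{trans-sect} together with its counterpart in \cite{MZ}: inductively expand each $\iF_z$ as a $\ZZ$-linear combination of $\Ffpf_w$'s and vice versa, matching the recursions term by term. The hope is that both transitions respect a common triangular structure with respect to $\ellfpf$ and the corresponding length statistic for affine involutions, so that the inductive step lifts across the two families and yields the equality of $\ZZ$-spans without requiring an explicit characterization of the shape sets.
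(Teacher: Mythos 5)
This statement is labeled a \emph{conjecture} in the paper and is justified only by computer calculations; there is no proof in the paper to compare against, so the only question is whether your plan is sound, and it is not.

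The reduction breaks at the very first step. You invoke ``a standard unitriangularity argument'' to conclude $\tilde{\Omega}^\fpf_n = \ZZ\spanning\{m_\lambda : \lambda \in \nu(\cF_n)\}$ and $\tilde{\Omega}_n = \ZZ\spanning\{m_\lambda : \lambda \in \mu(\tI_n)\}$. This is not a valid inference: a unitriangular family $f_z = m_{\nu(z)} + \sum_{\lambda < \nu(z)} c^z_\lambda m_\lambda$ spans a free $\ZZ$-module of the expected rank, but its span coincides with $\ZZ\spanning\{m_\lambda : \lambda \in \nu(\cF_n)\}$ only under the additional closure hypothesis that $c^z_\lambda \neq 0$ forces $\lambda \in \nu(\cF_n)$ (and similarly for the $\iF$-family). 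Nothing in the paper establishes any such property, and the finite analogue shows that this pattern of reasoning fails: $\Omega_n$ is the $\ZZ$-span of the Schur $P$-functions $P_\lambda$ with $\lambda$ strict inside a staircase, the leading shapes in the monomial expansion are precisely these strict $\lambda$'s, and yet $P_2 = m_2 + m_{11} \in \Omega_n$ is not in $\ZZ\spanning\{m_\lambda : \lambda\ \text{strict}\}$ because $(1,1)$ is not strict. So the conjecture is not equivalent to $\nu(\cF_n) = \mu(\tI_n)$ by your argument, and the rest of the plan never gets off the ground without a separate proof of the two closure properties, which are strong assertions in their own right. Your two fallback sketches are also incomplete, as you acknowledge: the ``FPF completion'' gives no control on how adjoining a pairing on the fixed points alters the shape, and the transition-formula idea is obstructed because each transition formula (the affine FPF formula of Section~\ref{trans-sect} and its analogue in \cite{MZ}) recurses strictly within its own family and provides no mechanism for passing between $\{\Ffpf_z\}$ and $\{\iF_z\}$.
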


This is mysterious, because  $\left\{\iF_z:z\in\tI_n\right\}\neq\left\{\Ffpf_z:z\in\cF_n\right\}$ and $\cF_n \subsetneq \tI_n$.

\begin{conjecture}\label{omega3-conj}
If $n$ is even then $\tilde{\Omega}_{n-1}\subsetneq \tilde{\Omega}_n$ and $\tilde{\Omega}_{n-1}\not\subset \tilde{\Omega}_{n+2}$.
\end{conjecture}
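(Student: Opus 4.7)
The plan is to leverage the triangular monomial expansion of the involution Stanley symmetric functions $\iF_z$ established in \cite{MZ} (the analogue of Theorem~\ref{i<-thm} for general affine involutions), together with the affine involution transition formula of \cite{MZ}, to compare the three spans via their leading-term supports. Since each $\iF_z$ equals $m_{\nu(z)}$ plus a $\NN$-linear combination of strictly smaller monomials in dominance order, the membership question $f \in \tilde\Omega_m$ essentially reduces to inspecting which shapes $\nu(z)$ with $z \in \tI_m$ can match the leading monomial of $f$ and then peeling off leading terms recursively.

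For the containment $\tilde\Omega_{n-1} \subseteq \tilde\Omega_n$, I would first note that $\Par^{n-1} \subseteq \Par^n$, so every leading shape appearing among the $\iF_z$ with $z \in \tI_{n-1}$ is a legal leading shape for some $\iF_w$ with $w \in \tI_n$, provided the shape function $\nu : \tI_n \to \Par^n$ is surjective onto the relevant subset. Assuming this, containment follows by a standard triangular reduction: given $z \in \tI_{n-1}$, subtract from $\iF_z$ an appropriate $\iF_w$ with $w \in \tI_n$ and $\nu(w) = \nu(z)$, then recurse on the strictly smaller remainder. Strictness follows by exhibiting some $w \in \tI_n$ with $\nu(w) \in \Par^n \setminus \Par^{n-1}$, e.g.\ any involution with a code entry equal to $n-1$; triangularity then prevents $\iF_w$ from lying in $\tilde\Omega_{n-1}$.

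For the non-containment $\tilde\Omega_{n-1} \not\subset \tilde\Omega_{n+2}$, a coarse degree or parts-bound argument will not suffice since $\Par^{n-1} \subset \Par^{n+2}$. The plan is to exhibit a specific $z \in \tI_{n-1}$ whose leading shape $\nu(z)$ cannot be matched by any $\NN$-linear combination of leading shapes from $\{\iF_w : w \in \tI_{n+2}\}$ with unit coefficient at $\nu(z)$. Since the conjecture singles out $n$ even and both $n-1$ and $n+2$ are odd, I would look for shapes whose combinatorics obstruct their appearance in $\tI_{n+2}$ despite living in $\Par^{n+2}$. Guided by the $\Omega_n$ Schur $P$-basis description recalled above, a promising candidate is a strict partition fitting the $(n-2, n-4, \dots)$ staircase but incompatible with the corresponding staircase for $\tI_{n+2}$.

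The principal obstacle is precisely the open problem flagged just before Conjecture~\ref{omega1-conj}: the image of the shape map $\nu : \tI_n \to \Par^n$ is not known. Without this, it is hard both to verify that the triangular reduction in the first part terminates inside $\tilde\Omega_n$ and to produce the distinguishing element in the second part. A reasonable intermediate target is therefore a partial description of this image, at least enough to isolate a parity-sensitive obstruction, which would yield both $\tilde\Omega_{n-1} \subsetneq \tilde\Omega_n$ and $\tilde\Omega_{n-1} \not\subset \tilde\Omega_{n+2}$ as consequences.
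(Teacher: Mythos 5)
This statement is presented in the paper as a \emph{conjecture}, not a theorem: the text immediately preceding it reads ``Computer calculations suggest the following conjectures,'' and no proof is given. So there is no paper argument to compare your proposal against, and you should be aware that what you are being asked to prove is an open problem.

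Your sketch is a reasonable outline of how one might attack the problem, and you are candid about the fact that it does not close. But it is worth being precise about where the gaps are, because they are more serious than a missing description of the image of $\nu$. For the inclusion $\tilde\Omega_{n-1}\subseteq\tilde\Omega_n$, the triangular reduction you describe requires not only that $\nu:\tI_n\to\Par^n$ hit the leading shape $\nu(z)$ of the element you start with, but that it hit \emph{every} shape that can appear as the leading shape of a remainder at any stage of the recursion; and even granting that, the coefficient of $m_\mu$ in a remainder might be negative or might require subtracting an $\iF_w$ with the correct leading shape but the wrong tail, so the process need not terminate inside $\tilde\Omega_n$ — the $\iF_w$ for $w\in\tI_n$ span only the proper submodule $\tilde\Omega_n\subsetneq\Sym^{(n)}$, so triangularity alone gives you nothing. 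For the non-containment $\tilde\Omega_{n-1}\not\subset\tilde\Omega_{n+2}$, you correctly observe that a leading-term or degree argument cannot work since $\Par^{n-1}\subset\Par^{n+2}$, but the ``parity-sensitive obstruction'' you gesture at is not identified, and the Schur $P$-basis facts you invoke only describe the finite spans $\Omega_m$, not the affine spans $\tilde\Omega_m$, where, as the paper notes, the $\iF_z$ are not in general Schur $P$-positive or even in the Schur $P$-span. In short, the proposal names a plausible framework but does not supply the key combinatorial input (a characterization, or usable partial characterization, of the image of the shape map, together with control of the tails) that would be needed to make either direction rigorous; the statement remains open.
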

Here for $n=2$, we define $\tilde{\Omega}_{1}=\ZZ$. Finally:
\begin{conjecture}
If $n$ is even then $\tilde{\Omega}^\fpf_n\not\subset \tilde{\Omega}^\fpf_{n+2}$.
\end{conjecture}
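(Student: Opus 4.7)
The plan is to exhibit a specific affine FPF involution $z \in \cF_n$ whose symmetric function $\Ffpf_z$ lies outside $\tilde\Omega^\fpf_{n+2}$, exploiting the triangular expansion in the monomial basis afforded by Theorem~\ref{i<-thm}(a). Every $\Ffpf_{z'}$ for $z' \in \cF_{n+2}$ satisfies $\Ffpf_{z'} = m_{\nu(z')} + \sum_{\mu < \nu(z')} c^{z'}_\mu m_\mu$ with coefficients $c^{z'}_\mu \in \NN$, so for any $\ZZ$-linear combination $f = \sum_{z'} d_{z'} \Ffpf_{z'} \in \tilde\Omega^\fpf_{n+2}$ the coefficient of $m_\lambda$ in $f$ can be nonzero only if $\lambda \leq \nu(z')$ in the dominance order for some $z'$ with $d_{z'} \neq 0$. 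It therefore suffices to produce $z \in \cF_n$ with shape $\lambda = \nu(z)$ such that $\lambda \not\leq \nu(z')$ for every $z' \in \cF_{n+2}$: then $[m_\lambda]\Ffpf_z = 1$ while $[m_\lambda] f = 0$ for every $f \in \tilde\Omega^\fpf_{n+2}$, forcing $\Ffpf_z \notin \tilde\Omega^\fpf_{n+2}$.

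Since $\Omega^\fpf_n = \Omega_{n-1} \subset \Omega_{n+1} = \Omega^\fpf_{n+2}$ as $\ZZ$-spans of Schur $P$-functions (see the discussion preceding Conjecture~\ref{omega1-conj}), the witness cannot come from a finite FPF involution in $\cF_n \cap S_n$; one must search among genuinely affine $z \in \cF_n \setminus S_n$. For such $z$, the shape $\nu(z) = \lambda(\alpha^\fpf_{\max}(z))$ from Lemma~\ref{lambda-nu-lem} can be produced iteratively by applying the conjugation rule of Proposition~\ref{alpha-conj-prop} to $\Theta^\pm$ at successive non-descent positions, which gradually fills the length-$n$ code vector. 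Natural candidates are affine FPF involutions with ``cyclically loaded'' codes, with each entry comparable to $n$, since their shapes then have many rows of near-maximal length and are therefore bulky in dominance, which heuristically should make them hard to exceed from within $\cF_{n+2}$ where the codes have length $n+2$ but are constrained by different cyclic relations.

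The main obstacle is verifying the non-domination condition: proving that no $z' \in \cF_{n+2}$ produces a shape dominating the chosen $\lambda$. The paper flags the image of $\nu : \cF_n \to \Par^n$ as an open problem, so this step requires genuinely new combinatorial input. One promising route is to work instead in the affine Schur basis, using the unitriangular expansion $\Ffpf_z = \tF_{\nu(z)} + \sum_{\mu < \nu(z)} c_\mu \tF_\mu$ from Theorem~\ref{i<-thm}(b); there the obstruction may admit a cleaner certificate via the classification of Grassmannian elements of $\tS_{n+2}$ and a period argument tied to the difference between $n$ and $n+2$. For small $n$ (say $n = 2, 4, 6$) the conjecture should be directly checkable by computer, and the resulting witness ought to exhibit a pattern generalizing to all even $n$, likely of the form $z = w^{-1} \Theta^\pm w$ for a ``long'' cyclic element $w \in \tS_n$ whose shape has no analogue in $\tS_{n+2}$.
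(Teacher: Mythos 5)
This statement is an open \emph{conjecture} in the paper; the authors do not prove it. They only observe (in the text immediately following) that it would follow from Conjectures~\ref{omega1-conj} and \ref{omega3-conj}: if $\tilde\Omega_n = \tilde\Omega^\fpf_n$ for all even $n$ and $\tilde\Omega_{n-1}\not\subset\tilde\Omega_{n+2}$ with $\tilde\Omega_{n-1}\subsetneq\tilde\Omega_n$, then $\tilde\Omega_n\subset\tilde\Omega_{n+2}$ would force $\tilde\Omega_{n-1}\subset\tilde\Omega_{n+2}$, a contradiction. There is no unconditional proof in the paper to compare against, so your task was to supply an actual proof of an open statement.

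Your proposal is not a proof: you yourself flag the central step as requiring ``genuinely new combinatorial input.'' The reduction you set up is logically correct --- if one could exhibit $\lambda = \nu(z)$ for some $z\in\cF_n$ such that $\lambda\not\leq\nu(z')$ in dominance for every $z'\in\cF_{n+2}$ with $\ellfpf(z')=\ellfpf(z)$, then the triangularity in Theorem~\ref{i<-thm}(a) would give $[m_\lambda]\Ffpf_z=1$ while $[m_\lambda]f=0$ for all $f\in\tilde\Omega^\fpf_{n+2}$. But you never produce the witness, and the search you sketch (``cyclically loaded codes,'' ``a period argument tied to the difference between $n$ and $n+2$'') remains a heuristic with no actual construction or verification.

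More seriously, the strategy is in danger of being unworkable, not merely incomplete. Since $\Par^n\subset\Par^{n+2}$, every shape $\lambda$ realized by $\nu$ on $\cF_n$ is at least a priori an admissible shape for $\cF_{n+2}$. If the image of $\nu:\cF_{n+2}\to\Par^{n+2}$ contains the image of $\nu:\cF_n\to\Par^n$ --- which is entirely consistent with what the paper reports and seems plausible on its face --- then for each $\lambda=\nu(z)$ there is a $z'\in\cF_{n+2}$ with $\nu(z')=\lambda$, so $\lambda\leq\nu(z')$ and the leading-term argument cannot see the obstruction. In that scenario the conjecture would have to be proved by comparing the full expansions (lower-order monomial or affine-Schur coefficients), not just the leading terms. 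Before investing in this route, you should at minimum determine whether $\nu(\cF_n)\subset\nu(\cF_{n+2})$ for a small even $n$; if it is, the approach is dead, and if not, you would at last have a candidate witness to check directly.
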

This conjecture would be a corollary of Conjectures~\ref{omega1-conj} and \ref{omega3-conj}.

One nice consequence of these conjectures is that although we might think that FPF involutions are a special case of ordinary involutions, in fact the FPF case is equally general at least at the level of Stanley symmetric functions. This is counterintuitive and worth more consideration.

\section{Quasiparabolic sets}\label{qp-sect}

Let $(W,S)$ be an arbitrary Coxeter system with length function $\ell$. For the general theory of Coxeter systems, see \cite{H}. In \cite{RV}, Rains and Vazirani give a definition of a quasiparabolic $W$-set, which we now review. Let 
$R(W) = \{ wsw^{-1} : w\in W,\ s \in S\}$ denote the set of reflections in $W$. 

\begin{definition}[{Rains and Vazirani \cite[Definition 2.1]{RV}}]
A \emph{scaled $W$-set} is a pair $(X,\h)$ with $X$ a $W$-set and $\h$: $X \to \ZZ$ a function such that $|\h(sx)-\h(x)|\le1$ for all $s\in S$. An element $x\in X$ is \emph{$W$-minimal} if $\h(sx)\ge  \h(x)$ for all $s\in S$.
\end{definition}

\begin{definition}[{Rains and Vazirani \cite[Definition 2.3]{RV}}]\label{qp-def}
A \emph{quasiparabolic $W$-set} is a scaled $W$-set $(X,\h)$ satisfying the following two properties:
\begin{enumerate}
    \item[$\bullet$] (QP1) For all $r\in R(W)$, $x\in X$, if $\h(rx)=\h(x)$, then $rx = x$.
    \item[$\bullet$] (QP2) For all $r\in R(W)$, $x\in X$, $s\in S$, if $\h(rx) > \h(x)$ and $\h(srx) < \h(sx)$, then $rx = sx$.
\end{enumerate}
\end{definition}

Before we stating more results, we mention a fundamental example.

\begin{example}\label{qp-ex1}
Choose a subset $J\subset S$ and let $W_J = \langle J\rangle$ be a standard parabolic subgroup. The set of left cosets $X = W/W_J$ is a quasiparabolic $W$-set with respect to the height function $\h : X \to \ZZ$ defined by $\h(C) = \min_{w \in C} \ell(w)$ for a left coset $C$ of $W_J$ in $W$.
\end{example}

The following technical lemma is useful for verifying condition (QP2).

\begin{lemma}[{Rains and Vazirani \cite[Lemma 2.4]{RV}}]\label{ht+1-lem}
Let $(X,\h)$ be a scaled $W$-set, and suppose $r\in R(W)$, $s\in S$, $x\in X$ are such that $\h(rx)>\h(x)$ and $\h(srx)<\h(sx)$. Then $\h(rx)=\h(sx)=\h(x)+1=\h(srx)+1$.
\end{lemma}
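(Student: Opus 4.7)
The plan is to observe that once we unpack the scaled hypothesis at all four points $x$, $rx$, $sx$, $srx$, the numerical values are essentially forced. The scaled condition controls the change in $\h$ under the simple reflection $s$, and applying it at both $x$ and at $rx$ gives us the two inequalities
\[ |\h(sx)-\h(x)|\le 1 \qquand |\h(srx)-\h(rx)|\le 1.\]
(Note that we do not need a scaled bound for the reflection $r$, which is important since $r$ may be a non-simple reflection for which jumps greater than one are allowed.)

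From here the argument is a short pigeonhole on the four heights. Set $h_0=\h(x)$, $h_1=\h(rx)$, $h_2=\h(sx)$, $h_3=\h(srx)$. The hypotheses give $h_1\ge h_0+1$ and $h_3\le h_2-1$, while the scaled inequalities give $h_2\le h_0+1$ and $h_3\ge h_1-1$. Chaining these forces
\[ h_0 \le h_1-1 \le h_3 \le h_2-1 \le h_0,\]
so all four quantities are pinned: $h_3=h_0$, $h_1=h_2=h_0+1$, which is exactly the desired conclusion.

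The main (and only) subtlety to watch is making sure we use the scaled hypothesis at the right base points, i.e.\ that $|\h(sy)-\h(y)|\le 1$ holds for every $y\in X$ (not just $y=x$), so that we can apply it simultaneously to $y=x$ and $y=rx$. Once that is in place, there is no further obstacle; the result is essentially arithmetic.
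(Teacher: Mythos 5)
Your proof is correct. Note that the paper does not supply its own proof of this lemma; it is quoted verbatim from Rains and Vazirani \cite[Lemma 2.4]{RV}. Your sandwich argument --- using $|\h(sy)-\h(y)|\le 1$ at both base points $y=x$ and $y=rx$, combined with the strict inequalities (which strengthen to $\pm 1$ over $\ZZ$) to chain $h_0 \le h_1-1 \le h_3 \le h_2-1 \le h_0$ --- is the natural and essentially only route, and it is the argument given in the cited reference. You also correctly flag the one point worth noting: the scaled hypothesis is only available for $s\in S$, not for the arbitrary reflection $r\in R(W)$, which is precisely why one applies it at the two base points $x$ and $rx$ rather than trying to bound $\h(rx)-\h(x)$ directly.
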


\begin{definition}[{Rains and Vazirani \cite[Definition 2.11]{RV}}]\label{r-e-def}
Let $(X,\h)$ be a quasiparabolic $W$-set with $W$-minimal element $x_0$. Suppose $x\in Wx_0\subset X$ is in the $W$-orbit of $x_0$ and has height $\h(x)=k+\h(x_0)$. Then we call $s_1s_2\cdots s_kx_0$ a \emph{reduced expression} for $x$ if $x=s_1s_2\cdots s_kx_0$, $s_i\in S$. By abuse of notation, we also call $wx_0$ reduced where $w =s_1s_2\cdots s_k$ is the corresponding reduced expression.
\end{definition}

The product $W\times W$ is itself a Coxeter group with respect to the set of 
simple generators $\{1\} \times S \sqcup S \times \{1\}$.

\begin{theorem}[{Rains and Vazirani \cite[Theorem 3.1]{RV}}]\label{W-qp-lem}
The set $W$, together with the height function $\h=\ell$ and the $W\times W$-action
$(w',w'')\cdot w :=w'w(w'')^{-1}$ for $w,w',w'' \in W$
is a quasiparabolic $W\times W$-set.
\end{theorem}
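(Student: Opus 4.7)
My plan is to check each condition of Definition~\ref{qp-def} for the scaled $W \times W$-set $(W, \ell)$ under the action $(w', w'')\cdot w := w' w (w'')^{-1}$. The Coxeter generators of $W \times W$ are $T := (S \times \{1\}) \cup (\{1\} \times S)$, and the set of reflections is $R(W \times W) = (R(W) \times \{1\}) \cup (\{1\} \times R(W))$. The scaled-set condition $|\ell(\tau \cdot w) - \ell(w)| \le 1$ for $\tau \in T$ is immediate, since each $\tau$ acts on $w$ as left or right multiplication by a single simple reflection.

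For (QP1), suppose $r \in R(W \times W)$ and $w \in W$ satisfy $\ell(r \cdot w) = \ell(w)$. Then $r \cdot w$ equals either $tw$ or $wt$ for some reflection $t \in R(W)$. Since any reflection is a product of an odd number of simple reflections, the sign homomorphism shows $\ell(tw)$ and $\ell(w)$ (and likewise $\ell(wt)$ and $\ell(w)$) must have opposite parities, so the hypothesis cannot occur and (QP1) holds vacuously.

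For (QP2), suppose $r \in R(W \times W)$, $\tau \in T$, and $w \in W$ satisfy $\ell(r \cdot w) > \ell(w)$ and $\ell(\tau r \cdot w) < \ell(\tau \cdot w)$. Lemma~\ref{ht+1-lem} forces
\[ \ell(r \cdot w) = \ell(\tau \cdot w) = \ell(w) + 1 = \ell(\tau r \cdot w) + 1. \]
I would split into four cases according to the sides on which $r$ and $\tau$ act. In each case, start with a reduced expression for $\tau \cdot w$ of the form $sw$ or $ws$, obtained by prepending or appending the relevant simple reflection $s$ to a reduced expression of $w$. Next, rewrite $\tau r \cdot w$ as the action on $\tau \cdot w$ of some reflection $t' \in R(W)$: directly as $t$ when $r$ and $\tau$ act on opposite sides, and as the conjugate $t' = sts$ when they act on the same side. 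Since $\ell(\tau r \cdot w) = \ell(\tau \cdot w) - 1$, the Strong Exchange Condition expresses $\tau r \cdot w$ as the subword of the chosen reduced expression of $\tau \cdot w$ obtained by deleting exactly one letter. Deleting any letter of $w$ (rather than the appended or prepended $s$) would produce an element which, after acting by $\tau$ once more to recover $r \cdot w$, has length at most $\ell(w) - 1$; this contradicts $\ell(r \cdot w) = \ell(w) + 1$. The only permissible deletion is therefore that of $s$, giving $\tau r \cdot w = w$ and hence $r \cdot w = \tau \cdot w$, as required.

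The main obstacle I anticipate is organizing the four cases uniformly, since the side of each action and the occurrence of the conjugation $t \mapsto sts$ vary between them. Once the correct reduced expression and side of action are chosen in each case, the Strong Exchange Condition combined with the length identity from Lemma~\ref{ht+1-lem} delivers the desired equality $r \cdot w = \tau \cdot w$ by a single subword comparison, completing the verification of (QP2).
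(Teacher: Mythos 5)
The paper does not prove this statement: it is quoted directly from Rains and Vazirani (\cite[Theorem 3.1]{RV}) and used as a black box, so there is no internal argument to compare against. Your proposal is nevertheless a correct self-contained proof, and it is worth briefly confirming the key steps. The scaled-set condition is immediate. For (QP1), the sign/parity observation that $\ell(tw)$ and $\ell(wt)$ always differ from $\ell(w)$ by an odd amount when $t$ is a reflection renders the hypothesis $\ell(r\cdot w)=\ell(w)$ vacuous, so (QP1) holds with nothing to check. For (QP2), you correctly reduce to the four configurations of which coordinate of $W\times W$ each of $r$ and $\tau$ lives in, apply Lemma~\ref{ht+1-lem} to pin down $\ell(\tau\cdot w)=\ell(w)+1$, choose the reduced expression $sw_1\cdots w_k$ (or $w_1\cdots w_k s$) for $\tau\cdot w$, and then express $\tau r\cdot w$ as a reflection acting on $\tau\cdot w$ from the appropriate side (either $t$ itself, or the conjugate $sts$ when $r$ and $\tau$ act on the same side). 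The Strong Exchange Condition then forces the deleted letter in the drop $\ell(\tau r\cdot w)=\ell(\tau\cdot w)-1$ to be the prepended or appended $s$; otherwise, cancelling $s$ again yields a word for $r\cdot w$ of length at most $\ell(w)-1$, contradicting $\ell(r\cdot w)=\ell(w)+1$. This gives $\tau r\cdot w = w$, hence $r\cdot w=\tau\cdot w$, and the four cases go through as you describe. The argument is essentially the standard Coxeter-theoretic verification one would expect for this statement.
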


The \emph{Bruhat order} on $W$ is the transitive closure of the relation with $u<v$
if $\ell(v) =\ell(u)+1$ and $v = ru$ for some $r \in R(W)$.
Similarly, there is a Bruhat order on each quasiparabolic set.

\begin{definition}[{Rains and Vazirani \cite[Definition 5.1]{RV}}]\label{Bruhat-defn}
Let $(X, \h)$ be a quasiparabolic $W$-set. The \emph{Bruhat order} on $X$ is the transitive closure of the relation that for $x\in X$, $r\in R(W)$, $x\le rx$ if $\h(x)\le \h(rx)$.
\end{definition}

If $X=W/W_J$ as in Example~\ref{qp-ex1}, then this ``quasiparabolic'' Bruhat order coincides with the usual Bruhat order on $W$ for elements in the same bounded $W_J$-orbit  \cite[Theorem 5.12]{RV}. In particular, the Bruhat order of $W$ viewed as a $W\times W$-set is the same as the usual Bruhat order.

We now specialize to the case when $W=\tS_n$
where $n$ is even and $S = \{s_1,s_2,\dots,s_n\}$.
Our main result is to show that the set of fixed-point-free involutions $\cF_n\subset \tS_n$
is naturally a quasiparabolic $\tS_n$-set. This is interesting on account of the following.

In \cite[Theorem 4.6]{RV}, Rains and Vazirani showed that the conjugacy class of fixed-point-free involutions in the finite symmetric group $S_n$  is a quasiparabolic $S_n$-set. This is an important example of a quasiparabolic set that is not parabolic. 
An involution $z =z^{-1} \in W$ is \emph{perfect} if for all reflections $r \in R(W)$, it holds that $r$ commutes with $rzr$.
Rains and Vazirani proved \cite[Theorem 4.6]{RV} that all $W$-conjugacy classes of perfect involutions are quasiparabolic sets.

While it is easy to show that the fixed-point-free involutions in $S_n$ are perfect, this does not hold for elements of $\cF_n$. (For a counterexample, take $z=\tp\in\cF_4$ and $r=t_{4,7}$.) Therefore, we need some new techniques to show that $\cF_n$ is quasiparabolic.

In the following theorem, $\tS_n$ acts on $\cF_n$ by conjugation.

\begin{theorem}\label{quasiparabolic set}
The pair $(\cF_n,\ellfpf)$ is a quasiparabolic $\tS_n$-set.
\end{theorem}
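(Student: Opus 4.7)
The plan is to verify the three defining properties of a quasiparabolic set in turn: that $(\cF_n,\ellfpf)$ is a scaled $\tS_n$-set, that it satisfies (QP1), and that it satisfies (QP2). The scaled property is immediate from Corollary~\ref{length-s-conj-prop}, which shows that $\ellfpf(s_i z s_i) - \ellfpf(z) \in \{-1,0,+1\}$ for all $s_i \in S$ and $z \in \cF_n$.

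For (QP1), I must show that if $r = t_{ij} \in R(\tS_n)$ and $\ellfpf(rzr)=\ellfpf(z)$, then $rzr = z$. Inspecting Proposition~\ref{length-t-conj-prop}, the equation $\ell(tzt)=\ell(z)$ can only hold in the subcase $z(i) = j + mn$ with $m=0$; that is, $z(i) = j$ and $z(j)=i$. Then $tzt$ agrees with $z$ at $i$ and $j$ (hence at all elements of $\{i,j\}+n\ZZ$ by equivariance). Since $z$ is an involution, it preserves the complement of $\{i,j\}+n\ZZ$, on which $t$ acts as the identity, so $tzt = z$ on all of $\ZZ$.

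For (QP2), suppose $r \in R(\tS_n)$, $s = s_k \in S$, and $z \in \cF_n$ satisfy $\ellfpf(rzr) > \ellfpf(z)$ and $\ellfpf(srzrs) < \ellfpf(szs)$. By Lemma~\ref{ht+1-lem}, these force $\ellfpf(rzr) = \ellfpf(szs) = \ellfpf(z)+1$ and $\ellfpf(srzrs) = \ellfpf(z)$, equivalently $\ell(rzr)=\ell(szs)=\ell(z)+2$ and $\ell(srzrs)=\ell(z)$. I must show $rzr = szs$, which is equivalent to $sr$ belonging to the centralizer $C(z) := \{w \in \tS_n : wzw^{-1}=z\}$. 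The plan is to combine several rigidity constraints: Corollary~\ref{length-t-cor} gives $\ell(rz)=\ell(zr)=\ell(z)+1$, which pins down the inversion pattern of $z$ at $\{i,j\}$; Corollary~\ref{length-s-conj-prop} applied to $z$ forces $z(k)<z(k+1)$, while applied to $u:=rzr$ it forces $u(k)>u(k+1)\neq k$. Matching these constraints against the two surviving subcases of Proposition~\ref{length-t-conj-prop} (either $z(i) \not\equiv j \pmod{n}$ with $z(i)<z(j)$ and $\delta(z,i,j)=\delta(rz,i,j)=0$, or $z(i)=j+mn$ with $m<-\tfrac{j-i}{2n}$), I expect to identify $r$ with an element of the form $wsw^{-1}$ for a particular $w \in C(z)$ read off from the cycle structure of $z$, from which $rzr = w(szs)w^{-1}=szs$ follows.

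The main obstacle is the case analysis underlying (QP2). Because affine FPF involutions fail the \emph{perfect} condition of Rains and Vazirani, their Theorem~4.6 does not apply, and the commutation patterns between $r$ and $s$ through $z$ do not line up as cleanly as in the finite case. The wrap-around subcase of Proposition~\ref{length-t-conj-prop}, in which the support $\{i,j\}$ of $r$ interacts with itself modulo $n$ via $z$, has no counterpart in the finite symmetric group and will likely require the most care; I expect to handle it by cycle-tracking arguments analogous to those used in the proof of Lemma~\ref{par-inv-lem} and Proposition~\ref{alpha-conj-prop}, possibly supplemented by an additional length identity refining Corollary~\ref{length-t-cor}.
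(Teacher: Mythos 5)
Your verification that $(\cF_n,\ellfpf)$ is a scaled $\tS_n$-set and your argument for (QP1) are both correct and essentially identical to the paper's: the scaled property follows immediately from Corollary~\ref{length-s-conj-prop}, and the case table in Proposition~\ref{length-t-conj-prop} shows that $\ell(t_{ij}zt_{ij})=\ell(z)$ forces $z(i)=j$ and hence $t_{ij}zt_{ij}=z$.

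Your treatment of (QP2), however, is only a plan, and it is missing the idea that makes the paper's proof work. After using Lemma~\ref{ht+1-lem} and Corollary~\ref{length-t-cor} to nail down the lengths of $rzr$, $szs$, $zr$, $rz$, $rzrs$, and $srzr$ relative to $\ell(z)$ (a step you do carry out correctly), the paper does \emph{not} proceed by a case analysis on the residues of $i,j$ or by trying to realize $sr$ inside the centralizer of $z$. Instead it exploits Theorem~\ref{W-qp-lem}: the Coxeter group $\tS_n$ itself is a quasiparabolic $\tS_n\times\tS_n$-set under the two-sided action $(w',w'')\cdot w = w'w(w'')^{-1}$ with height $\ell$. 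One then splits on the value of $\ell(srz) \in \{\ell(z),\ell(z)+2\}$. If $\ell(srz)=\ell(z)$, the hypotheses of (QP2) for the $\tS_n\times\tS_n$-set, applied with the reflection $(r,1)$ and simple generator $(s,1)$ at the point $z$, yield $sz=rz$, i.e. $s=r$, so $szs=rzr$ trivially. If $\ell(srz)=\ell(z)+2$, the same (QP2) is applied twice in the $\tS_n\times\tS_n$-set — once with $(1,r)$, $(s,1)$ at the point $rz$ to get $srz=rzr$, and once with $(srs,1)$, $(1,s)$ at the point $sz$ to get $srz=szs$ — and the two conclusions combine to give $rzr=szs$.

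Your proposal to match the subcases of Proposition~\ref{length-t-conj-prop} and find a $w\in C(z)$ with $r=wsw^{-1}$ is not a proof; you correctly flag the wrap-around subcase $z(i)=j+mn$, $m\ne 0$, as the hard one, but you give no argument that the intended identification of $r$ with a conjugate of $s$ actually goes through there, and there is no reason a priori that $sr$ must centralize $z$. The paper's route bypasses all of this: it uses no new cycle-tracking, no refinement of Corollary~\ref{length-t-cor}, and no analysis of $C(z)$; it reduces everything to the known quasiparabolic structure on $\tS_n$ itself. To repair your proof you should invoke Theorem~\ref{W-qp-lem} in the two cases $\ell(srz)=\ell(z)$ and $\ell(srz)=\ell(z)+2$ as above, rather than attempting the direct case analysis.
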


There are two $\tS_n$-minimal elements in $\cF_n$, given by $\tp$ and $\tm$.

\begin{proof}
Fix $z \in \cF_n$ and $i<j\not\equiv i\modu n)$ and let $t=t_{ij} \in \tS_n$. According to Proposition~\ref{length-s-conj-prop}, we see that $(\cF_n,\ellfpf)$ is a scaled $\tS_n$-set. The first condition in Definition~\ref{qp-def} is equivalent to the claim that $\ellfpf(tzt) = \ellfpf(z)$ only if $tzt=z$. This follows from Proposition~\ref{length-t-conj-prop}.

Now fix $k \in \ZZ$, let $s=s_k \in \tS_n$, and assume that $\ellfpf(tzt)>\ellfpf(z)$ and  $\ellfpf(stzts) < \ellfpf(szs)$. The second condition in Definition~\ref{qp-def} is equivalent to the claim that $szs = tzt$. Lemma~\ref{ht+1-lem} implies that 
$$\ellfpf(tzt)=\ellfpf(szs)=\ellfpf(z)+1=\ellfpf(stzts)+1.$$
In other words, since $\ellfpf(z)=\frac{1}{2}\ell(z)-\frac{n}{4}$, we have
$$\ell(tzt)=\ell(szs)=\ell(z)+2=\ell(stzts)+2.$$
Thus, by Corollary~\ref{length-t-cor}, we have $\ell(zt)=\ell(tz)=\ell(z)+1$ and $\ell(tzts)=\ell(stzt)=\ell(tzt)-1=\ell(z)+1$.

Now consider the element $stz$, which has length $\ell(stz)\in \{\ell(tz)-1, \ell(tz)+1\}=\{\ell(z), \ell(z)+2\}$. We have two cases. Recall that $(x,y)\cdot w:= xwy^{-1}$ for $x,y,w \in\tS_n$.
If $\ell(stz)=\ell(z)$, then by the fact that $\ell(sz)=\ell(tz)=\ell(z)+1$, we have $\ell((t,1)\cdot z)>\ell(z)$ and $\ell((s,1)\cdot(t,1)\cdot z)<\ell((s,1)\cdot z)$. According to Lemma~\ref{W-qp-lem}, we conclude that $sz=tz$. But $\tS_n$ is a group, so we can cancel $z$ to find $s=t$, and thus in particular $szs=tzt$ as required.

If $\ell(stz)=\ell(z)+2$, then on the one hand
$\ell(stz)=\ell(tzt)=\ell(tz)+1=\ell(stzt)+1,$
which means
$$\ell((1,t)\cdot tz)>\ell(tz)\text{ and }\ell((s,1)\cdot(1,t)\cdot tz)<\ell((s,1)\cdot tz),$$
so by Lemma~\ref{W-qp-lem} we have $stz=tzt$, while on the other hand
$$\ell(stz)=\ell(szs)=\ell(sz)+1=\ell((sts)szs)+1,$$
which means
$$\ell((sts,1)\cdot sz)>\ell(sz)\text{ and }\ell((1,s)\cdot(sts,1)\cdot sz)<\ell((1,s)\cdot sz),$$
so by Lemma~\ref{W-qp-lem} we have $stz=(sts)sz=szs$. Therefore $tzt=szs$ as required.
\end{proof}

Since $\cF_n$ is a quasiparabolic $\tS_n$-set, it inherits a \emph{Bruhat order} from Definition~\ref{Bruhat-defn}.
We use the symbol $\leq_F$ to denote this partial order.
We write $x\lessdot_F y$ if $y \in \cF_n$ is a \emph{Bruhat cover} of $x \in \cF_n$, in other words, if there exists a transposition $t \in \tS_n$ such that $y=txt$ and $\ellfpf(y) = \ellfpf(x) + 1$.
Let $\le$ denote the usual Bruhat order on the Coxeter group $\tS_n$ and let $\lessdot$ denote the covering relation in $\leq$.

\begin{proposition}
Suppose $y,z \in \cF_n$. If $y\le_Fz $ then $y\le z$.
\end{proposition}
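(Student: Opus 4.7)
The plan is to reduce to the covering relation and then invoke Corollary~\ref{length-t-cor}. Since $\le_F$ is defined as the transitive closure of the relation $x\lessdot_F y$ (where $y = txt$ for a reflection $t\in\tS_n$ with $\ellfpf(y)=\ellfpf(x)+1$), and since the usual Bruhat order $\le$ on $\tS_n$ is also transitive, it suffices to prove the implication one cover at a time: if $y\lessdot_F z$ then $y\le z$.

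So suppose $z = tyt$ for a reflection $t=t_{ij}\in\tS_n$ with $\ellfpf(z)=\ellfpf(y)+1$. Recalling that $\ellfpf=\tfrac{1}{2}\ell-\tfrac{n}{4}$, this hypothesis translates to $\ell(z) = \ell(tyt) = \ell(y)+2$. At this point Corollary~\ref{length-t-cor} applies directly and yields $\ell(yt) = \ell(ty) = \ell(y)+1$. Therefore $y\lessdot ty$ in the usual Bruhat order on $\tS_n$ (since $ty$ differs from $y$ by left multiplication by the reflection $t$ and has length one greater), and similarly $ty \lessdot tyt = z$ (right multiplication by the reflection $t$ with length going up by one). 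Chaining these two covers gives $y \le z$ in the usual Bruhat order.

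Finally, given an arbitrary quasiparabolic comparison $y\le_F z$, we may write it as a finite sequence of covers $y = x_0 \lessdot_F x_1\lessdot_F \cdots \lessdot_F x_k = z$. Applying the above argument to each cover shows $x_i \le x_{i+1}$ in the usual Bruhat order for each $i$, and transitivity of $\le$ then gives $y \le z$, as desired.

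The main (and essentially only) technical input is Corollary~\ref{length-t-cor}, which is exactly what makes the FPF case work: a single quasiparabolic cover of length $2$ always factors as two ordinary Bruhat covers of length $1$, so no subtle behaviour (such as a cover $y\lessdot_F z$ that fails to be comparable in the usual order) can occur. Because of this factorization the proof is short and does not require any additional machinery beyond what has already been assembled.
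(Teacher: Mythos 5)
Your proof is correct and follows essentially the same route as the paper: reduce to a single $\lessdot_F$-cover, translate $\ellfpf$ to $\ell$, invoke Corollary~\ref{length-t-cor} to get $\ell(yt)=\ell(ty)=\ell(y)+1$, and chain two ordinary Bruhat covers. The only cosmetic difference is that you pass through the intermediate element $ty$ while the paper uses $yt$; both are immediate from the same corollary.
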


\begin{proof}
We have $y \lessdot_F z$ if and only if $z=tyt$ where $t =t_{ij}$ and $\ellfpf(tyt) = \ellfpf(y)+1$. 
But by Corollary~\ref{length-t-cor}, $\ellfpf(tyt) = \ellfpf(y)+1$ only if $\ell(yt) = \ell(y)+1$ and $\ell(tyt) = \ell(ty) +1$,
which implies $y < yt < tyt=z$. 
%
%
\end{proof}

We believe that the converse of this result is also true. 
Rains and Vazirani  showed in \cite{RV} that this at least holds when the orders are restricted to $\cF_n \cap S_n$, using the properties of perfect involutions.

\begin{corollary}\label{atom-cover-cor}
Suppose $y,z \in \cF_n$ are such that $\sfpf(y) = \sfpf(z)$ and $w \in \cAfpf(z)$. Then $y\lessdot_F z$ if and only if there exists $v \in \cAfpf(y)$ with $v \lessdot w$.
\end{corollary}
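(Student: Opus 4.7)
The plan is to handle the two implications separately. The reverse direction reduces to a short calculation: if $v \in \cAfpf(y)$ satisfies $v \lessdot w$, there is a reflection $r \in \tS_n$ with $w = vr$ and $\ell(w) = \ell(v)+1$. Writing $\Theta \in \{\Theta^+,\Theta^-\}$ for the common element conjugate to $y$ and $z$, I would compute $z = w^{-1}\Theta w = r v^{-1}\Theta v\, r = ryr$, while $\ellfpf(z) = \ell(w) = \ell(v) + 1 = \ellfpf(y) + 1$. This gives $y \lessdot_F z$ by Definition~\ref{Bruhat-defn}.

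For the forward direction, assume $y \lessdot_F z$ and fix $w \in \cAfpf(z)$. The plan is to translate to the language of reduced expressions for the quasiparabolic $\tS_n$-set $\cF_n$. Choose a reduced expression $w = s_{i_1}\cdots s_{i_k}$ in $\tS_n$, where $k = \ell(w) = \ellfpf(z)$, and set $z_m := (s_{i_1}\cdots s_{i_m})^{-1}\Theta(s_{i_1}\cdots s_{i_m})$. Then $z_0 = \Theta$, $z_k = z$, and each transition $z_m = s_{i_m} z_{m-1} s_{i_m}$ is conjugation by a simple reflection. I claim that $\ellfpf(z_m) - \ellfpf(z_{m-1}) = 1$ for every $m$: each increment lies in $\{-1,0,+1\}$ by Corollary~\ref{length-s-conj-prop}, yet their sum equals $\ellfpf(z_k) - \ellfpf(z_0) = k$, so every increment must equal $+1$. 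This exhibits $w$ as encoding a reduced expression for $z$ in the sense of Definition~\ref{r-e-def}.

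With this in hand, the plan is to invoke the strong exchange property for quasiparabolic sets from Rains--Vazirani. Since $y \lessdot_F z$ via some reflection $t$ with $\ellfpf(y) = \ellfpf(z) - 1$, strong exchange produces an index $j$ such that $v := s_{i_1}\cdots\widehat{s_{i_j}}\cdots s_{i_k}$ is itself reduced (of length $k-1$) and satisfies $v^{-1}\Theta v = y$. Because $\ell(v) = k - 1 = \ellfpf(y)$ matches the minimum conjugator length from Theorem~\ref{length-alphamin-thm}, we conclude $v \in \cAfpf(y)$. Deleting a single letter from a reduced expression for $w$ exhibits $v$ as covered by $w$ in the Bruhat order of $\tS_n$, giving $v \lessdot w$ as required.

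The main obstacle will be pinning down the precise strong-exchange statement inside the quasiparabolic framework of \cite{RV}. If a direct citation of the right form is not available, the required statement can be re-derived by induction on $\ellfpf(z)$ using axioms (QP1) and (QP2) together with Lemma~\ref{ht+1-lem}, following the classical Coxeter-group proof almost verbatim; the only adjustment is replacing length considerations in $\tS_n$ with height considerations in $\cF_n$, which is precisely what the quasiparabolic axioms are designed to permit.
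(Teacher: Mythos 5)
Your reverse direction is complete and correct: given $v \lessdot w$ with $v \in \cAfpf(y)$, writing $w = vr$ for a reflection $r$ and computing $z = w^{-1}\Theta w = ryr$ with $\ellfpf(z) = \ell(w) = \ell(v)+1 = \ellfpf(y)+1$ does yield $y \lessdot_F z$ directly from the paper's definition of the cover relation $\lessdot_F$. (Minor note: you should cite the definition of $\lessdot_F$ just after Theorem~\ref{quasiparabolic set}, not Definition~\ref{Bruhat-defn}, which defines $\le_F$ rather than the cover relation.)

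Your forward direction correctly reduces the problem to a strong-exchange statement for quasiparabolic sets, and the step showing each $z_m$ rises by exactly one in $\ellfpf$ is clean. But you have identified the obstacle yourself without resolving it, and that obstacle is precisely where the paper's entire proof lives: the paper's argument is the single sentence that this follows from \cite[Theorem 5.15]{RV}, which is Rains--Vazirani's theorem relating the quasiparabolic Bruhat order on $X$ to the Bruhat order on $W$ via reduced expressions $wx_0$. In other words, the result you call ``the precise strong-exchange statement inside the quasiparabolic framework'' is not a gap to be filled around the citation --- it \emph{is} the citation. So your plan is not really a different route so much as a re-derivation of what RV already proved; if you carry it out, you will have essentially reproved \cite[Theorem 5.15]{RV} at the level of covers. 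This is pedagogically useful but leaves the forward direction incomplete as written: you would need to actually state and prove (or locate in \cite{RV}) the strong-exchange property, and the induction from (QP1)/(QP2) is somewhat more delicate than in the Coxeter-group case since reflections, not just simple generators, must be handled. Once you see that \cite[Theorem 5.15]{RV} already gives exactly the lift of Bruhat covers between reduced expressions, the cleanest move is simply to cite it, as the paper does.
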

\begin{proof}
Since $(\cF_n, \ellfpf)$ is quasiparabolic set, this result follows from \cite[Theorem 5.15]{RV}.
\end{proof}

We discuss some other applications of Theorem~\ref{quasiparabolic set}.
Let $\cA=\ZZ[v,v^{-1}]$ and let $(W,S)$ be a Coxeter system with length function $\ell$. The \emph{Iwahori-Hecke algebra} of
$(W,S)$ is the $\cA$-algebra
$\cH=\cH(W,S)$, with a basis given by the symbols $H_w$ for $w\in W$, whose multiplication is uniquely determined by the condition that
$$H_sH_w=\begin{cases}
H_{sw}&\ell(sw)>\ell(w)\\
H_{sw}+(v-v^{-1})H_w&\ell(sw)<\ell(w)\end{cases}\qquad \text{for }s\in S\text{ and }w\in W.$$
%
The following definitions are from Stembridge's papers \cite{Stem1,Stem2}.
\begin{definition}
An \emph{$I$-labeled graph} for a finite set $I$ is a triple $\Gamma=(V,\omega,\tau)$ where
\ben
\item[(i)] $V$ is a finite vertex set;
\item[(ii)] $\omega: V\times V\to \cA$ is a map;
\item[(iii)] $\tau: V\to \mathcal{P}(I)$ is a map assigning a subset of $I$ to each vertex.
\een
We view $\Gamma$ as a weighted directed graph on the vertex set $V$ with an edge  $x\xrightarrow{\omega(x,y)} y$ if $\omega(x,y)\neq 0$.
\end{definition}

\begin{definition}
An $S$-labeled graph $\Gamma=(V,\omega,\tau)$ if a \emph{$W$-graph} if the free $\cA$-module generated by $V$ can be given an $\cH$-module structure with
\[
H_sx=\begin{cases}vx&s\not\in\tau(x)\\
-v^{-1}x+\sum_{y\in V;s\not\in\tau(y)}\omega(x,y)y&s\in\tau(x)\end{cases}
\qquad \text{ for }s\in S\text{ and }x\in V.
\]
\end{definition}

\begin{example}
There exist a unique ring homomorphism $\cH\to\cH$ with $v\mapsto v^{-1}$ and $H_s\mapsto H_s^{-1}$; we denote this map by $H\mapsto\overline{H}$, and refer to it as the \emph{bar operator} of $\mathcal{H}$.
Write $<$ for the Bruhat order on $W$.
By well-known results of Kazhdan and Lusztig \cite{KL},
for each $w\in W$ there is a unique $\underline H_w\in\mathcal{H}$ with
$$\underline{H}_w = \overline{\underline{H}}_w\in H_w+\sum_{y<w}v^{-1}\ZZ[v^{-1}]H_y.$$
The elements $\{\underline{H}_w\}_{w\in W}$ form an $\cA$-basis for $\mathcal{H}$, called the \emph{Kazhdan-Lusztig basis}.
Define $h_{x,y}\in\ZZ[v^{-1}]$ for $x,y \in W$ such that $\underline{H}_y=\sum_{x\in W}h_{x,y}H_y$,
and let $\mu(x,y)$ be the coefficient of $v^{-1}$ in $h_{x,y}$. 
Finally, let 
$$ \tau(x) = \{ s \in S : \ell(sx) < \ell(x)\}\qquand \omega(x,y)=\begin{cases}\mu(x,y)&\text{if }\tau(x)\not\subset\tau(y)\\0,&\text{otherwise}\end{cases}$$
for $x,y \in W$.
Then $\Gamma = (W,\omega,\tau)$ is a $W$-graph \cite{KL}.
\end{example}

%
%

We turn back to the case $W=\tS_n$. Let $\cM =\cA\text{-span}\{M_z:z\in\cF_n\}$ and $
\cN=\cA\text{-span}\{N_z:z\in\cF_n\}$
denote the free $\cA$-modules with bases given by the symbols $M_z$ and $N_z$ for $z\in\cF_n$. We call $\{M_z\}_{z\in\cF_n}$ and $\{N_z\}_{z\in\cF_n}$ the standard bases of $\cM$ and $\cN$, respectively. 
Let $S = \{s_1,s_2,\dots,s_n\}$.

\begin{corollary}
Both $\cM$ and $\cN$ have unique $\cH$-module structures such that if $s\in  S$ and $z\in\cF_n$ then
{\small\[H_sM_z=\begin{cases}
M_{szs}&\ellfpf(szs)>\ellfpf(z)\\
vM_{z}&\ellfpf(szs)=\ellfpf(z)\\
M_{szs}+(v-v^{-1})M_z&\ellfpf(szs)<\ellfpf(z)\end{cases}
\quand
H_sN_z=\begin{cases}
N_{szs}&\ellfpf(szs)>\ellfpf(z)\\
-v^{-1}N_{z}&\ellfpf(szs)=\ellfpf(z)\\
N_{szs}+(v-v^{-1})N_z&\ellfpf(szs)<\ellfpf(z).\end{cases}
\]}
\end{corollary}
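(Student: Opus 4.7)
The plan is to leverage Theorem~\ref{quasiparabolic set} together with the standard construction, due to Rains and Vazirani \cite{RV}, of an Iwahori--Hecke algebra module attached to any quasiparabolic $W$-set. In the generality of \cite{RV}, each quasiparabolic $W$-set $(X,\h)$ carries both an ``unsigned'' and a ``signed'' $\cH$-module structure given by exactly the formulas displayed in the statement, with $\ellfpf$ replaced by the general height function $\h$ and conjugation replaced by the $W$-action on $X$. It therefore suffices to verify that the defining relations of $\cH$, namely the quadratic relation $H_s^2 = 1 + (v - v^{-1}) H_s$ and the braid relations, are satisfied by the prescribed formulas.

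For the quadratic relation I would check each of the three cases in the definition of $H_s M_z$. In the height-preserving case $\ellfpf(szs) = \ellfpf(z)$, axiom (QP1) applied with $r = s$ forces $szs = z$, and the identity reduces to the scalar equality $v^2 = 1 + (v - v^{-1})v$ for $\cM$ and $v^{-2} = 1 + (v - v^{-1})(-v^{-1})$ for $\cN$. The cases $\ellfpf(szs) > \ellfpf(z)$ and $\ellfpf(szs) < \ellfpf(z)$ are linked by the substitution $z \leftrightarrow szs$ and collapse into a single two-step computation: applying $H_s$ to $M_z$ and then to $M_{szs}$ using the appropriate case of the formula yields $H_s^2 M_z = M_z + (v - v^{-1}) H_s M_z$ on the nose.

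The braid relations form the main obstacle. For $\tS_n$ they take two shapes: $H_{s_i} H_{s_j} = H_{s_j} H_{s_i}$ when $s_i$ and $s_j$ commute, and $H_{s_i} H_{s_j} H_{s_i} = H_{s_j} H_{s_i} H_{s_j}$ when $|i-j| \equiv \pm 1 \pmod n$. In each instance I would fix $z \in \cF_n$, consider the orbit of $z$ under the finite dihedral subgroup $\langle s_i, s_j \rangle$ acting by conjugation, and analyze the height profile $\ellfpf$ along this orbit using Corollary~\ref{length-s-conj-prop}. The verification then reduces to finitely many combinatorial configurations of height patterns on the orbit; the delicate mixed-height configurations are precisely the situations governed by axiom (QP2), which pins down when a reflection and a simple generator produce the same element, and this is exactly the input needed to force both sides of each braid relation to act identically on $M_z$ (respectively $N_z$). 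The signed case is handled in parallel, with the sign of the height-fixed coefficient propagating consistently through every case.

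Finally, uniqueness is automatic: the simple generators $s \in S$ generate $\cH$ as an $\cA$-algebra, so prescribing the action of each $H_s$ on each basis element $M_z$ (respectively $N_z$) determines the entire module structure by $\cA$-linearity and associativity.
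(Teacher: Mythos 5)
Your first paragraph is the paper's entire argument: Theorem~\ref{quasiparabolic set} shows $(\cF_n,\ellfpf)$ is a quasiparabolic $\tS_n$-set, and the two $\cH$-module structures then exist and are unique by \cite[Theorem 7.1]{RV}, which gives precisely these formulas for any quasiparabolic $W$-set. The remaining paragraphs, which sketch a from-scratch verification of the quadratic and braid relations, are not needed once you invoke that theorem --- you would be re-proving the cited result rather than applying it --- although the ingredients you point to (QP1 for the height-preserving quadratic case, QP2 and the dihedral-orbit height analysis for the braid relations) are indeed the ones Rains and Vazirani use, so the sketch is consistent with their proof even if redundant here. One small caveat if you did carry out the direct route: for $n=2$ there is no braid relation between $s_1$ and $s_2$ (the group is infinite dihedral), so the case split "$|i-j|\equiv\pm1\pmod n$ gives a length-three braid relation" would need to exclude that case.
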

\begin{proof}
The result is a special case of \cite[Theorem 7.1]{RV} since $(\cF_n,\ellfpf)$ is a quasiparabolic $\tS_n$-set.
\end{proof}

Rains and Vazirani's general theory of quasiparabolic sets gives us for free the $\mathcal{H}$-module structures described in the previous result. These modules are potentially interesting to study on their own. We note a few special properties which follow from results in \cite{Mar1}.

We write $f \mapsto \overline{f}$ for the automorphism of $\cA$ interchanging $v$ and $v^{-1}$. A map $U\to V$ of $\cA$-modules is \emph{$\cA$-antilinear} if $x\mapsto y$ implies $ax\mapsto\overline{a}y$ for all $a\in\cA$.

\begin{corollary}
The $\cH$-modules $\cM$ and $\cN$ have the following properties:
\ben
\item[(a)] There are unique $\cA$-antilinear maps $\cM\to\cM$ and $\cN\to\cN$, which we write as $X \mapsto\overline X$,
with 
$$\overline{HM}=\overline{H}\cdot\overline{M}\text{ and }\overline{M_{\Theta}}=M_{\Theta}
\qquand
\overline{HN}=\overline{H}\cdot\overline{N}\text{ and }\overline{N_{\Theta}}=N_{\Theta}
$$
for all $M \in \cM$, $N \in \cN$, and $\Theta \in \{\Theta^\pm\}$. Moreover, both of these maps are involutions.

\item[(b)] The $\mathcal{H}$-modules
$\cM$ and $\cN$ have unique $\cA$-bases $\{\underline{M}_x\}_{x \in \cF_n}$ and $\{\underline{N}_x\}_{x \in \cF_n}$ satisfying
$$\underline{M}_x=\overline{\underline{M}_x}\in M_x+\sum_{w<_F x}v^{-1}\ZZ[v^{-1}]M_w
\qquand
\underline{N}_x=\overline{\underline{N}_x}\in N_x+\sum_{w<_F x}v^{-1}\ZZ[v^{-1}]N_w$$
where both sums are over $w\in\cF_n$. We refer to these as  the \emph{canonical bases} of $\cM$ and $\cN$.

\een
\end{corollary}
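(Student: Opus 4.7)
The plan is to reduce both parts to the general theory of bar operators and canonical bases on quasiparabolic Hecke modules, as developed in \cite{Mar1}. The key input is Theorem~\ref{quasiparabolic set}: $(\cF_n,\ellfpf)$ is a quasiparabolic $\tS_n$-set, and it decomposes into two orbits $\cFp$ and $\cFm$ with $\tS_n$-minimal elements $\Theta^+$ and $\Theta^-$, respectively. Because the $\cH$-module structures on $\cM$ and $\cN$ are exactly the two standard quasiparabolic Hecke actions attached to $(\cF_n,\ellfpf)$, I would apply the constructions from \cite{Mar1} orbit by orbit and then take the obvious direct sum.

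For part (a), the construction on each orbit goes as follows. Fix $\Theta \in \{\Theta^\pm\}$ and set $\overline{M_\Theta} = M_\Theta$. For any $z \in \tS_n \cdot \Theta$ with $\ellfpf(z) = \ellfpf(\Theta) + k$, pick a reduced expression $z = s_{i_1} s_{i_2} \cdots s_{i_k}\cdot \Theta$ in the quasiparabolic sense of Definition~\ref{r-e-def}. By iterating the first case of the action formula for $H_s M_z$ one obtains $M_z = H_{s_{i_1}} H_{s_{i_2}} \cdots H_{s_{i_k}} M_\Theta$. The required compatibility $\overline{HM} = \overline{H}\cdot\overline{M}$ then forces the definition
\[
\overline{M_z} \;=\; H_{s_{i_1}}^{-1} H_{s_{i_2}}^{-1} \cdots H_{s_{i_k}}^{-1} M_\Theta,
\]
and the analogous recipe defines $\overline{N_z}$. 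The only nontrivial point is that the right-hand side does not depend on the chosen reduced expression. This is a Matsumoto-type statement for quasiparabolic sets, which is established in \cite{Mar1} using the exchange-like consequences of axioms (QP1) and (QP2); it is precisely what allows one to mimic Lusztig's classical construction in the parabolic case. Once well-definedness is in hand, the involutivity of $X\mapsto \overline{X}$ follows since $\overline{M_\Theta}=M_\Theta$ and $\overline{\overline{H}}=H$ in $\cH$.

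For part (b), I would run the standard Kazhdan-Lusztig induction along the Bruhat order $<_F$ on $\cF_n$. Set $\underline{M}_\Theta = M_\Theta$, which is already bar-invariant. Suppose $\underline{M}_w$ satisfying the required triangularity and bar-invariance has been constructed for all $w <_F x$. Using Corollary~\ref{atom-cover-cor} together with the quasiparabolic cover relation, choose a cover $y \lessdot_F x$ with $x = s y s$ and $\ellfpf(x) = \ellfpf(y)+1$ for some $s \in S$. Then $H_s \underline{M}_y$ is bar-invariant, expands as $M_x$ plus an $\cA$-linear combination of $M_w$ for $w <_F x$, and can be corrected by a unique $v^{-1}\ZZ[v^{-1}]$-combination of the previously constructed $\underline{M}_w$'s to produce the required $\underline{M}_x$; the same argument with $N$ in place of $M$ produces $\underline{N}_x$. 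Uniqueness is a formal consequence of the triangularity constraint together with bar-invariance, exactly as in the parabolic setting.

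The main obstacle is the well-definedness of the bar operator, i.e.\ showing that different reduced expressions for the same $z \in \cF_n$ yield the same right-hand side above. In the classical setting this is Matsumoto's theorem; in the quasiparabolic setting it requires the braid-move analysis carried out in \cite{Mar1}, which in turn depends crucially on axioms (QP1) and (QP2). Given Theorem~\ref{quasiparabolic set}, this analysis applies verbatim and both parts (a) and (b) follow without further work.
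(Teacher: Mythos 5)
Your proposal takes essentially the same route as the paper: Theorem~\ref{quasiparabolic set} puts $(\cF_n,\ellfpf)$ into the framework of \cite{Mar1}, and both parts of the corollary then follow from the general theory of bar operators and canonical bases for quasiparabolic conjugacy classes developed there (the paper cites \cite[Theorem 4.19]{Mar1} for existence of the bar operator, \cite[Propositions 3.2 and 3.8]{Mar1} for uniqueness and involutivity, and \cite[Theorem 3.14]{Mar1} for part (b)). Your additional exposition, reconstructing the reduced-expression definition of the bar operator and the Kazhdan--Lusztig induction for the canonical basis, is a reasonable account of what goes into those citations, with one caveat: the claim that well-definedness of $\overline{M_z}$ is ``a Matsumoto-type statement for quasiparabolic sets'' understates the difficulty. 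For a general quasiparabolic set the stabilizer of a minimal element need not be parabolic, so two reduced words $s_{i_1}\cdots s_{i_k}\cdot\Theta = s_{j_1}\cdots s_{j_k}\cdot\Theta$ need not be reduced words for the same group element, and no braid-move argument applies directly. The proof in \cite{Mar1} is specific to \emph{conjugacy} classes and exploits the extra symmetry of the conjugation action rather than a Matsumoto-style braid analysis; this is also why you must (as you correctly do) work orbit-by-orbit over $\cFp$ and $\cFm$ rather than on all of $\cF_n$ at once. Since you defer the hard step to \cite{Mar1}, this imprecision does not affect the validity of the argument, but the characterization of the cited proof should be adjusted.
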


For other extensions of this result, see \cite{Lu2016}.

\begin{proof}
Since $(\cF_n,\ellfpf)$ is a quasiparabolic $\tS_n$-set,
existence in part (a) follows from \cite[Theorem 4.19]{Mar1}. The other claims follow from \cite[Propositions 3.2 and 3.8]{Mar1}.
Part (b) is \cite[Theorem 3.14]{Mar1}.
\end{proof}

Define $\m_{x,y}$ and $\n_{x,y}$ for $x,y\in \cF_n$ as the polynomials in $\ZZ[v^{-1}]$ such that
$$\underline{M}_y=\sum_{x\in  \tS_n}\m_{x,y}M_x\qquand \underline{N}_y=\sum_{x\in \tS_n}\n_{x,y}N_x.$$
Let $\mu_\m(x,y)$ and $\mu_\n(x,y)$ denote the coefficients of $v^{-1}$ in $\m_{x,y}$ and $\n_{x,y}$.
Define $\tau_\m$, $\tau_\n$: $\cF_n\to \mathcal{P}(S)$ by
\[
\tau_\m(x) = \{s\in S : sxs \le_F x\}\qquand \tau_\n(x) = \{s\in S : x\le_F sxs\}
\]
where $S = \{s_1,s_2,\dots,s_n \} \subset \tS_n$.
Finally, let $\omega_\m$: $\cF_n\times\cF_n\to\ZZ$ be the map with
\[
\omega_\m(x,y)=\begin{cases}\mu_\m(x,y)+\mu_\m(y,x)&\tau_\m(x)\not\subset\tau_\m(y)\\
0&\tau_\m(x)\subset\tau_\m(y).\end{cases}
\]
Define $\omega_\n$: $\cF_n\times\cF_n\to\ZZ$ by the same formula, but with $\mu_\m$ and $\tau_\m$ replaced by $\mu_\n$ and $\tau_\n$.

\begin{corollary}[{\cite[Theorem 3.26]{Mar1}}]
Both $\Gamma_\m=(\cF_n,\omega_\m,\tau_\m)$ and $\Gamma_\n=(\cF_n,\omega_\n,\tau_\n)$ are $\tS_n$-graphs.
\end{corollary}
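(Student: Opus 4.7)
The plan is to invoke Marberg's general theorem \cite[Theorem 3.26]{Mar1}, of which this corollary is a direct specialization. All of the required hypotheses have already been verified in the preceding results of this section: Theorem~\ref{quasiparabolic set} establishes that $(\cF_n,\ellfpf)$ is a quasiparabolic $\tS_n$-set, and the two immediately preceding corollaries supply the Hecke-module structures on $\cM$ and $\cN$, their bar operators, and the canonical bases $\{\underline{M}_x\}_{x\in\cF_n}$ and $\{\underline{N}_x\}_{x\in\cF_n}$. With these ingredients in hand, the statement follows by directly applying the abstract result.

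To indicate how Marberg's theorem is obtained, the essential step is a Kazhdan--Lusztig-style computation of $H_s\underline{M}_x$ (and of $H_s\underline{N}_x$) expanded back in the canonical basis. Starting from the defining expansion $\underline{M}_x \in M_x + \sum_{w <_F x} v^{-1}\ZZ[v^{-1}]\, M_w$ and the explicit action formula from the previous corollary, one uses the bar-invariance $\overline{\underline{M}_x}=\underline{M}_x$ together with the quadratic Hecke relation $H_s^2 = 1 + (v-v^{-1})H_s$ to determine the coefficients in $H_s\underline{M}_x = \sum c_{x,y} \underline{M}_y$. A minimality argument on the $v$-degree shows that these coefficients are, up to sign and a diagonal term, the leading coefficients $\mu_\m(x,y)$ of the transition polynomials $\m_{x,y}$; pairing the computations for $H_s\underline{M}_x$ and $H_s\underline{M}_y$ and exploiting the involutivity of $H_s$ modulo $(v-v^{-1})$ then forces the symmetrization $\mu_\m(x,y)+\mu_\m(y,x)$ that appears in the definition of $\omega_\m(x,y)$. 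The argument for $\cN$ is entirely parallel, with the roles of the two $H_s$-eigenvalues $v$ and $-v^{-1}$ exchanged, which is precisely why the descent set $\tau_\n$ is defined using the opposite inequality from $\tau_\m$.

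The principal technical subtlety in Marberg's proof, and thus the main obstacle in any self-contained redo, is correctly handling the degenerate case $sxs=x$: by axiom (QP1) the Hecke generator acts on $M_x$ as a pure scalar (namely $v$ on $\cM$ or $-v^{-1}$ on $\cN$), and one must verify that this scalar is absorbed consistently into the $W$-graph formula without producing spurious off-diagonal contributions when pushed through to the canonical basis. The definitions of $\tau_\m$ and $\tau_\n$ using $\leq_F$ versus $\geq_F$ are tuned precisely so that these two eigenvalue cases are distinguished correctly. Once this degenerate case is treated in the abstract setting of \cite{Mar1}, the remainder of the verification is routine Kazhdan--Lusztig bookkeeping, and the present corollary follows at once.
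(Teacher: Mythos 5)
Your approach is the same as the paper's: the corollary is obtained by directly citing Marberg's \cite[Theorem~3.26]{Mar1} once Theorem~\ref{quasiparabolic set} establishes that $(\cF_n,\ellfpf)$ is a quasiparabolic $\tS_n$-set. Your second and third paragraphs sketch the internal mechanics of Marberg's theorem, which is not attempted (or needed) in the paper, but your first paragraph already constitutes the complete and correct argument matching the source.
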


The strongly connected components in a $W$-graph $\Gamma$ are called \emph{cells}. The connected components with respect to doubly-directed edges are called \emph{molecules}.
It is generally an interesting problem to determine the cells and molecules in a given $W$-graph.

If $n=2$ then one can show that
$\Gamma_\m$ and $\Gamma_\n$ both decompose into just two cells given by $\cFp$ and $\cFm$.
For even integers $n>2$,  the molecules and cells for $\Gamma_\m$ and $\Gamma_\n$ are still a mystery. 
We expect that the solution to this problem may be
related to the affine RSK-correspondence defined in \cite{CPY}.

\section{Transition formulas}\label{trans-sect}

Let $n$ be a positive integer. For $\pi \in \tS_n$ and $ r \in \ZZ$ define the sets
\be\label{phi-sets-eq}
\ba
 \Phi^-_r(\pi) &= \{ \sigma \in\tS_n : \pi \lessdot \sigma = \pi t_{ir}\text{ for some integer }i < r\text{ with }i\notin r + n\ZZ\},
\\
\Phi^+_r(\pi) &= \{ \sigma\in\tS_n : \pi \lessdot \sigma= \pi t_{rj}\text{ for some integer }j > r\text{ with }j\notin r + n\ZZ\}
.
\ea
\ee
Lam and Shimozono \cite{LamShim} proved the following \emph{transition formula} for $\tF_\pi$:

\begin{theorem}[{\cite[Theorem 7]{LamShim}}]\label{trans-thm}
If $\pi \in \tS_n$ and $r \in \ZZ$ then
$\sum_{\sigma \in \Phi^-_r(\pi)} \tF_\sigma = \sum_{\sigma \in \Phi^+_r(\pi)} \tF_\sigma$.
\end{theorem}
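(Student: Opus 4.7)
The plan is to lift Theorem~\ref{trans-thm} to an identity in the \emph{affine nilCoxeter algebra} $\mathcal{A}_n$, following the noncommutative generating function strategy of Fomin--Stanley adapted to the affine case by Lam~\cite{Lam}. Recall that $\mathcal{A}_n$ is the associative $\ZZ$-algebra with generators $u_1, \ldots, u_n$ subject to $u_i^2=0$, the affine braid relations, and commutation $u_i u_j = u_j u_i$ whenever $s_i s_j = s_j s_i$; setting $u_w = u_{i_1}\cdots u_{i_\ell}$ for any reduced word of $w$ yields a $\ZZ$-basis $\{u_w : w \in \tS_n\}$ of $\mathcal{A}_n$.

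First I would introduce the \emph{cyclically decreasing generator} $\mathbf{h}(t) = \sum_{w \text{ cyc. dec.}} t^{\ell(w)}\, u_w \in \mathcal{A}_n[[t]]$ and verify, by directly unpacking Definition~\ref{lam-def}, that the noncommutative generating series $\sum_{\pi\in\tS_n} \tF_\pi\, u_\pi$ equals the ordered infinite product $\cdots \mathbf{h}(x_3)\,\mathbf{h}(x_2)\,\mathbf{h}(x_1)$ in a suitable completion. Theorem~\ref{trans-thm} is then equivalent to an identity that can be verified intrinsically inside $\mathcal{A}_n$, since extracting the coefficient of $u_\pi$ on either side recovers the corresponding symmetric function.

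The central step is a \emph{Monk-type commutation identity} in $\mathcal{A}_n$: for every fixed $r \in \ZZ$,
\[
\Bigl(\ts\sum_{i<r,\,i\not\equiv r}u_{t_{ir}}\Bigr)\mathbf{h}(t) \;\equiv\; \mathbf{h}(t)\Bigl(\ts\sum_{j>r,\,j\not\equiv r}u_{t_{rj}}\Bigr)
\]
modulo any product $u_\sigma u_\tau$ for which $\ell(\sigma\tau) \ne \ell(\sigma)+\ell(\tau)$. Granting this, one iterates through each factor $\mathbf{h}(x_k)$ in the infinite product and extracts the coefficient of $u_\pi$ to recover Theorem~\ref{trans-thm}: the length-additive products on the left- and right-hand sides of the commutation exactly enumerate the covers in $\Phi^-_r(\pi)$ and $\Phi^+_r(\pi)$, respectively.

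The main obstacle will be proving this Monk-type commutation. The finite analogue is classical, but the affine version features two \emph{a priori} infinite sums over reflections $t_{ir}$ and $t_{rj}$; one must first show via Lemma~\ref{t-length-lem} that only finitely many terms survive length-additively when multiplied by a cyclically decreasing $u_w$, and then bijectively match the surviving contributions on each side. This matching reduces to a careful inventory of length-additive Bruhat covers of cyclically decreasing elements through the position $r$, complicated by the cyclic identification $s_i = s_{i+n}$ and expected to closely parallel the combinatorial core of~\cite[\S7]{LamShim}.
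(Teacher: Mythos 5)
The paper does not prove Theorem~\ref{trans-thm}; it is quoted verbatim from Lam--Shimozono \cite[Theorem~7]{LamShim}, where it is established by constructing an affine analogue of D.~Little's bijection on reduced words. So there is no proof in the paper to compare against, and your proposal follows a genuinely different route from the one in the cited source: an algebraic argument in (a completion of) the affine nilCoxeter algebra rather than a weight-preserving bijection.

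As an independent proof attempt the proposal has a real gap, and it sits exactly at the crux. The Monk-type commutation
$\bigl(\sum_{i<r}u_{t_{ir}}\bigr)\mathbf{h}(t)=\mathbf{h}(t)\bigl(\sum_{j>r}u_{t_{rj}}\bigr)$
is not a known lemma and is, in substance, equivalent to the theorem you want; your closing paragraph concedes that establishing it reduces to ``bijectively matching the surviving contributions on each side,'' which is precisely the content of the Little bijection the algebraic route was supposed to replace, so as written the argument is circular. Two further points need to be addressed rather than deferred. First, the sums $\sum_{i<r,\,i\not\equiv r}u_{t_{ir}}$ and $\sum_{j>r,\,j\not\equiv r}u_{t_{rj}}$ are infinite; they live only in a completion of $\mathcal{A}_n$, and one must show the coefficient of each fixed $u_w$ in the product with $\mathbf{h}(t)$ (and, worse, with the full infinite product $\cdots\mathbf{h}(x_2)\mathbf{h}(x_1)$) is a finite sum, which is not automatic. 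Second, the bookkeeping linking coefficients to $\Phi^\pm_r(\pi)$ has a direction problem: multiplying $G=\sum_\pi\tF_\pi u_\pi$ on the left by $\sum_{i<r}u_{t_{ir}}$ produces terms indexed by left covers $t_{ir}\pi\lessdot\pi$, whereas $\Phi^-_r(\pi)$ is defined via right covers $\pi\lessdot\pi t_{ir}$; combined with Lam's inverted convention $\tF_\pi=F_{\pi^{-1}}$, the side on which the transposition sums should sit must be derived, not assumed. (A minor remark: in the nilCoxeter algebra with $u_i^2=0$, length-non-additive products $u_\sigma u_\tau$ are already zero, so there is nothing to work ``modulo.'')
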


This result is an affine generalization of
the transition formula of Lascoux and Sch\"utzenberger \cite{Lascoux} for Schubert polynomials.
Our goal is to prove an analogue of Lam and Shimozono's formula for the symmetric functions $\Ffpf_z$,
using the results in the previous section.
For the rest of this section, we assume $n$ is even.

First, we need a technical lemma.
\begin{lemma}\label{yty-lemma}
Let $y\in\cF_n$ and let $r,t\in\tS_n$ be transpositions with $y\neq tyt$. 
Suppose $i<j \not\equiv i \modu n)$ are integers such that $t=t_{ij}$.
\ben
\item[(a)] If $y(i)\equiv j\modu{n})$, then $ryr=tyt$ if and only if $r=t$.
\item[(b)] If $y(i)\not\equiv j\modu{n})$, then $ryr=tyt$ if and only if $r\in\{t,yty\}$.
\een
\end{lemma}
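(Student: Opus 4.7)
My plan is to translate the equation $ryr = tyt$ into a matching of cycle structures for $y$. For any transposition $s = t_{uv}$, the conjugate $sys$ is obtained from $y$ by relabelling each cycle $(a,y(a))$ of $y$ as $(s(a),s(y(a)))$, so $sys$ differs from $y$ exactly on those $y$-cycles that pass through $\{u,v\}+n\ZZ$. In particular, if I set $p=y(i)$ and $q=y(j)$, then $tyt$ differs from $y$ exactly on the $y$-cycles meeting $\{i,j\}+n\ZZ$, whose elements lie in the residue classes $\{\bar i,\bar j,\bar p,\bar q\}$ mod $n$. So a necessary condition for $ryr=tyt$ is that $\{\bar u,\bar v\}\subset\{\bar i,\bar j,\bar p,\bar q\}$; otherwise $r$ would move some element that $t$ fixes and $ryr$ would disagree with $tyt$ there.

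In case (a), we have $\bar p=\bar j$ and $\bar q=\bar i$, so the only available residue pair for $r$ is $\{\bar i,\bar j\}$. Since $y\neq tyt$, we may write $y(i)=j+mn$ with $m\neq 0$, so $tyt(i)=j-mn$. Using the affine periodicity $t_{u+n,v+n}=t_{uv}$, any such $r$ is of the form $r=t_{i,j+cn}$ for a unique $c\in\ZZ$. A direct calculation gives $ryr(i)=r(y(j+cn))=r(i+(c-m)n)=j+(2c-m)n$, and setting this equal to $j-mn$ forces $c=0$, i.e.\ $r=t$.

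In case (b), the classes $\bar i,\bar j,\bar p,\bar q$ are all distinct (the fact that $y$ is FPF rules out $\bar p=\bar i$, $\bar q=\bar j$; the hypothesis $\bar p\neq \bar j$ plus $y^2=1$ rules out the rest). I would first verify directly on the four elements $i,j,p,q$ that both $r=t$ and $r=yty=t_{pq}$ perform the same rewiring $\{(i,p),(j,q)\}\mapsto\{(i,q),(j,p)\}$ as $t$ does, so both satisfy $ryr=tyt$. For any other transposition with residue pair $\{\bar u,\bar v\}\subset\{\bar i,\bar j,\bar p,\bar q\}$, I parameterise by residue pair (six choices) and by an affine offset $c\in\ZZ$, and compute $ryr(i)$ explicitly; for four of the six residue pairs, $ryr(i)$ lies in the wrong residue class to equal $tyt(i)=q$, and for the two remaining pairs $\{\bar i,\bar j\}$ and $\{\bar p,\bar q\}$ only the offset $c=0$ gives the correct image, recovering exactly $r\in\{t,yty\}$.

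The main obstacle will be the affine-offset bookkeeping in case (b): every residue pair gives a $\ZZ$-family of transpositions, and we must use the right normalisation of $t_{uv}$ under $t_{u+n,v+n}=t_{uv}$ to make the computation of $ryr(i)$ uniform. Once that is set up, however, the argument reduces to a short direct comparison in each of the six residue cases, and the ``if'' direction in both (a) and (b) is simply a check on the four (or two) relevant elements.
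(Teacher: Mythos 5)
Your argument is correct, but for part (b) it follows a genuinely different route than the paper's. The paper applies the reduction homomorphism $\varphi: \tS_n \to S_n$ (reducing all values mod $n$), invokes \cite[Lemma 4.8]{RV} for the finite fixed-point-free case to conclude $\varphi(r) \in \{\varphi(t), \varphi(yty)\}$, and then lifts this back to $\tS_n$ via a short offset computation in two subcases. You instead do the residue-class analysis from scratch: you first argue that $r$'s residue pair must lie inside $\{\bar i, \bar j, \overline{y(i)}, \overline{y(j)}\}$ (else $ryr$ moves something $tyt$ fixes), then examine all six possible residue pairs by computing $ryr(i)$ directly, ruling out four by a residue mismatch with $tyt(i)$ and, for the remaining two pairs $\{\bar i, \bar j\}$ and $\{\overline{y(i)}, \overline{y(j)}\}$, forcing the affine offset $c$ to vanish, recovering $r \in \{t, yty\}$. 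Your version is more self-contained (no appeal to the finite result) at the cost of more casework; the paper's use of $\varphi$ compresses part (b) considerably. Part (a) is handled essentially the same way in both proofs, via a direct offset calculation after choosing a normalized representative for $r$.

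One small remark: your stated necessary condition $\{\bar u,\bar v\}\subset\{\bar i,\bar j,\overline{y(i)},\overline{y(j)}\}$ deserves a line of justification in the case where exactly one of $\bar u,\bar v$ lies outside the set. If $\bar u\notin\{\bar i,\bar j,\overline{y(i)},\overline{y(j)}\}$ and $y(v)\in\{u,v\}+n\ZZ$, one finds either a nonzero offset mismatch or $ryr=y$, both of which contradict $ryr=tyt\neq y$; writing this out keeps the argument airtight.
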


Before proving this lemma, we remark that it is similar to \cite[Lemma 4.8]{RV}, 
which applies only in $S_n \subsetneq \tS_n$.
\begin{lemma}[{\cite[Lemma 4.8]{RV}}]\label{1}
Let $y \in S_n$ be a fixed-point-free involution and let $r,t\in S_n$ be transpositions with $y\neq tyt=ryr$. Then $r\in\{t,yty\}$.
\end{lemma}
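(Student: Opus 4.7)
The plan is to compare, for each integer $k$, the values $(ryr)(k)$ and $(tyt)(k)$ by tracking how conjugation by an affine transposition permutes the cycle structure of $y$. For the ``if'' direction, the case $r=t$ is trivial. For $r = yty$ in part (b), I would note that $yty$ equals the affine transposition $t_{y(i),y(j)}$---well-defined because the affine relation $y(k+n) = y(k)+n$ forces $y(i) \not\equiv y(j) \pmod{n}$ whenever $i \not\equiv j$---and then verify by direct computation on $i, j, y(i), y(j)$ and their $n$-shifts that both $ryr$ and $tyt$ swap $i+kn \leftrightarrow y(j)+kn$ and $j+kn \leftrightarrow y(i)+kn$ while fixing every other pair of $y$.

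For the ``only if'' direction, write $r = t_{pq}$ with $p < q$ and $p \not\equiv q \pmod{n}$. A key observation is that $(ryr)(k) = y(k)$ whenever neither $k$ nor $y(k)$ lies in $\{p,q\} + n\ZZ$, and the analogous statement holds for $tyt$ with $\{i,j\}$ in place of $\{p,q\}$. Comparing the values of $ryr$ and $tyt$ at the inputs $i$ and $j$ then forces $\{p \bmod n, q \bmod n\} \subset \{i, j, y(i), y(j)\} \bmod n$; otherwise $r$ would fix one of these inputs while $tyt$ visibly moves it (using that $y \neq tyt$ rules out the degenerate cases).

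In case (a), write $y(i) = j + mn$ with $m \neq 0$ (forced by $y \neq tyt$); the available residues collapse to $\{i, j\} \bmod n$, so $p = i + p'n$ and $q = j + q'n$ for integers $p', q'$. A direct calculation gives $(ryr)(i+kn) = j + (k + 2d - m)n$ where $d := q' - p'$, while $(tyt)(i+kn) = j + (k - m)n$; equating these forces $d = 0$, and then the identity $t_{i+p'n, j+p'n} = t_{ij}$ (from affine periodicity) yields $r = t$. In case (b), the four residues $i, j, y(i), y(j)$ are pairwise distinct mod $n$, giving six possible pairs for $\{p \bmod n, q \bmod n\}$. The pairs $\{i,j\}$ and $\{y(i), y(j)\}$ yield $r = t$ and $r = yty$ respectively, by the same shift analysis as in case (a). The four remaining ``mixed'' pairs $\{i, y(i)\}, \{j, y(j)\}, \{i, y(j)\}, \{j, y(i)\}$ are excluded by residue comparison at the input $i$: in each subcase the residue of $(ryr)(i)$ lands in $\{y(i) \bmod n\}$, whereas the residue of $(tyt)(i) = y(j) + kn$ is $y(j) \bmod n$, and these residues cannot coincide in case (b).

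The main obstacle is the careful bookkeeping of shift parameters: apparent one-parameter families of transpositions $t_{a+p'n, b+p'n}$ all collapse to the single affine transposition $t_{ab}$ under affine periodicity, and this collapse is what keeps the solution set finite. The argument parallels the finite-case Lemma~1 of Rains and Vazirani, but the affine extension requires explicit residue arithmetic together with the fact (forced by $y \in \cF_n$) that $y(k) \not\equiv k \pmod{n}$ for all $k$, which underlies the pairwise distinctness of residues in case (b) and excludes the would-be solutions in case (a) with $m=0$.
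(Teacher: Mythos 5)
Your proposal does not prove the statement that was asked. Lemma~\ref{1} is the \emph{finite} result quoted from \cite{RV}: $y$ is a fixed-point-free involution in the finite group $S_n$, $r,t$ are transpositions of $S_n$, and the conclusion is the one-directional inclusion $r\in\{t,yty\}$. What you have written is a proof of the \emph{affine} Lemma~\ref{yty-lemma}, complete with its case split on whether $y(i)\equiv j\ (\mathrm{mod}\ n)$, the residue and shift-parameter bookkeeping, and an ``if'' direction that the finite lemma does not possess. The paper gives no proof of Lemma~\ref{1} at all --- it cites it and then \emph{uses} it, via the projection $\varphi:\tS_n\to S_n$, inside the proof of Lemma~\ref{yty-lemma}. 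For the finite statement none of the mod-$n$ apparatus is needed: writing $t=(i\,j)$, the hypothesis $y\neq tyt$ forces $y(i)\neq j$, so $i,j,y(i),y(j)$ are four distinct points; $tyt$ disagrees with $y$ exactly on these four points, while $ryr$ with $r=(p\,q)$ disagrees with $y$ only on $\{p,q,y(p),y(q)\}$, which pins $\{p,q\}$ down to one of six two-element subsets of $\{i,j,y(i),y(j)\}$; the pairs $\{i,j\}$ and $\{y(i),y(j)\}$ give $r=t$ and $r=yty$, and the four mixed pairs are eliminated by evaluating both sides at $i$. Your proposal contains exactly this skeleton underneath the affine decoration, so the idea is sound, but as written it is a proof of a different lemma.

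Read instead as a proof of Lemma~\ref{yty-lemma}, your route is genuinely different from the paper's: the paper handles case (b) by pushing everything down to $S_n$ with $\varphi$ and invoking the finite Lemma~\ref{1}, and only then does the shift analysis to nail down the lifts, whereas you carry out the support-and-residue analysis directly in $\tS_n$. That is a legitimate, self-contained alternative, but two details need repair. In case (a) you treat only $p\equiv i,\ q\equiv j\ (\mathrm{mod}\ n)$ and omit the symmetric possibility $p\equiv j,\ q\equiv i$ (the paper's sub-case (2)). In case (b), your stated reason for excluding the mixed pairs is incorrect for two of the four: when $\{p,q\}\equiv\{i,y(j)\}$ or $\{j,y(i)\}$, the residue of $(ryr)(i)$ is $j$, not $y(i)$; the contradiction with $(tyt)(i)\equiv y(j)$ still stands because $j\not\equiv y(j)$ by fixed-point-freeness, but the justification as written does not cover these subcases.
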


Define $\varphi: \tS_n\to S_n$ to be the map with $\varphi(w)(i) = r_n(w(i))$ for $w \in \tS_n$ and $i \in [n]$, 
where $r_n(x)\in[n]$ and $r_n(x)\equiv x\modu{n})$. This is a surjective group homomorphism, and it holds that $\varphi(w)=\varphi(v)$ for $v,w \in \tS_n$ if and only if $w(i)\equiv v(i)\modu{n})$ for each $i\in\ZZ$.

\begin{proof}[Proof of Lemma~\ref{yty-lemma}]
Let $i'<j'\not\equiv i' \modu n)$ be such that $r=t_{i'j'}$, and assume $y \neq tyt=ryr$.
First suppose
$y(i)\equiv j\modu{n})$, so that $y(i)=j+mn$ for some $m\in\ZZ$. 
Then $tyt(k)=y(k)$ if and only if $k\not\in\{i,j\}+n\ZZ$ and $ryr(k)=y(k)$ if and only if $k\not\in\{i',j'\}+n\ZZ$,
so we must have $\{i',j'\}+n\ZZ = \{i,j\}+n\ZZ$ since $tyt=ryr$. There are two cases, which we discuss as follows:
\begin{enumerate}
\item[(1)] Suppose $i'=i+pn$ and $j'=j+qn$ for $p,q\in\ZZ$. Then $p=q$ and $r=t$ since
\[ryr(i')=j+2qn-pn-mn= tyt(i')=j+pn-mn.\]
\item[(2)] Suppose $i'=j+pn$ and $j'=i+qn$ for $p,q\in\ZZ$. Then $p=q$ and $r=t$ since \[ryr(i')=i+2qn-pn+mn=tyt(i')=i+pn+mn.\]
\end{enumerate}
This completes the proof of part (a).

Now suppose $y(i)\not\equiv j\modu{n})$, so that we have $\varphi(ryr)=\varphi(tyt)\neq\varphi(y)$. 
By Lemma~\ref{1} we have $\varphi(r)\in\{\varphi(t),\varphi(yty)\}$. If $\varphi(r)=\varphi(t)$, then we can assume $r=t_{i,j+mn}$ for some $m\in\ZZ$, in which case 
\[ ryr(i) = t_{i,j+mn}(y(j))+mn= y(j) + mn \quand  tyt(i)= t_{ij}(y(j))= y(j)\] 
since $y(j)\notin \{i,j\} + n\ZZ$. But since $ryr=tyt$, we then must 
have $m=0$ and $r=t$. 
Instead assume
$\varphi(r)=\varphi(yty)=\varphi(t_{y(i),y(j)})$. Then $\{i',j'\} + n\ZZ = \{y(i), y(j)\} + n\ZZ$, so we have two cases: 

\begin{enumerate}
\item[(1)] Suppose $i'=y(i)+pn$ and $j'=y(j)+qn$ for $p,q\in\ZZ$. Then $p=q$ and $r=tyt$ since 
\[ryr(i')=ry(j') = r(j) + qn=j+qn=tyt(i')=tyt(y(i))+pn=j+pn.\] 
\item[(2)] Suppose $i'=y(j)+pn$ and $j'=y(i)+qn$ for $p,q\in\ZZ$. Then $p=q$ and $r=tyt$ since
\[ ryr(i')=ry(j')= r(i) + qn = i+qn=tyt(i')=tyt(y(j))+pn=i+pn.\] 
\end{enumerate}
Here $t$ fixes $y(i)$ and $y(j)$, because $\{i,j\} + n\ZZ$ and $\{y(i),y(j)\}+n\ZZ$ are disjoint. Similarly, $r$ fixes $i$ and $j$.
Thus, in either case we have $r=yty$. 
We conclude that if $tyt=ryr$ and $y(i)\not\equiv j\modu{n})$ then $r \in \{t,yty\}$.
It remains to check that if $r = yty$ and $y(i)\not\equiv j\modu{n})$ then $tyt=ryr$,
but this is straightforward.
\end{proof}

\begin{proposition}\label{covering-prop}
Let $y,z\in\cF_n$ and $\Theta \in \{ \Theta^\pm\}$ with $\sfpf(y)=\sfpf(z)=\sfpf(\Theta)$.
Fix $w\in\cAfpf(y)$ and $i<j\not\equiv i\modu{n})$ such that $w\lessdot wt_{ij}$.
Then $wt_{ij}\in\cAfpf(z)$ if and only if $y\lessdot_F z=t_{ij}yt_{ij}$.
\end{proposition}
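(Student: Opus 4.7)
The proposition is essentially a direct unpacking of the definition of $\cAfpf$ together with the characterization of Bruhat covers in the quasiparabolic set $(\cF_n,\ellfpf)$ from Theorem~\ref{quasiparabolic set}. The key inputs are Theorem~\ref{length-alphamin-thm} (which tells us that $\cAfpf(x) = \{u \in \tS_n : u^{-1}\Theta u = x \text{ and } \ell(u) = \ellfpf(x)\}$ for the unique $\Theta \in \{\Theta^\pm\}$ with $\sfpf(\Theta) = \sfpf(x)$) and the fact that $y\lessdot_F z$ exactly when $z = tyt$ for some reflection $t$ and $\ellfpf(z) = \ellfpf(y)+1$.

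My plan is to set $w' = wt_{ij}$ and make two preliminary observations that are used in both directions. First, since $w \in \cAfpf(y)$ and $\sfpf(y) = \sfpf(\Theta)$, we have $w^{-1}\Theta w = y$, hence
\[
(w')^{-1}\Theta w' = t_{ij} w^{-1}\Theta w t_{ij} = t_{ij} y t_{ij}.
\]
Second, from $w \lessdot w'$ and $w \in \cAfpf(y)$, Theorem~\ref{length-alphamin-thm} gives $\ell(w') = \ell(w)+1 = \ellfpf(y)+1$.

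For the forward direction, suppose $w' \in \cAfpf(z)$. Then $(w')^{-1}\Theta w' = z$ (using the same $\Theta$, since $\sfpf(z)=\sfpf(y)$), so from the first observation we obtain $z = t_{ij}yt_{ij}$. Moreover $\ell(w') = \ellfpf(z)$ by Theorem~\ref{length-alphamin-thm}, and combined with the second observation this yields $\ellfpf(z) = \ellfpf(y)+1$. These two facts together are exactly what it means for $y \lessdot_F z = t_{ij}yt_{ij}$. Conversely, assuming $y \lessdot_F z = t_{ij}yt_{ij}$, we have $z = t_{ij}yt_{ij}$ and $\ellfpf(z) = \ellfpf(y)+1$; substituting into the two preliminary observations yields $(w')^{-1}\Theta w' = z$ and $\ell(w') = \ellfpf(y)+1 = \ellfpf(z)$, so $w' \in \cAfpf(z)$ by the characterization recalled above.

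There is no substantial obstacle: the result is a bookkeeping argument. The only minor subtlety is ensuring that the $\Theta$ used to define $\cAfpf(y)$ and $\cAfpf(z)$ is the same, which is guaranteed by the assumption $\sfpf(y) = \sfpf(z) = \sfpf(\Theta)$. Lemma~\ref{yty-lemma} is not needed for this particular proposition, as it is aimed at the subsequent transition formula where one must compare different choices of conjugating reflection.
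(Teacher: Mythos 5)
Your proof is correct and follows essentially the same route as the paper: both arguments compute $(wt_{ij})^{-1}\Theta wt_{ij} = t_{ij}yt_{ij}$ from $w^{-1}\Theta w = y$, and match $\ell(wt_{ij}) = \ell(w)+1 = \ellfpf(y)+1$ against $\ellfpf(z)$, using Theorem~\ref{length-alphamin-thm} to identify $\cAfpf(z)$ with the set of $u$ satisfying $u^{-1}\Theta u = z$ and $\ell(u) = \ellfpf(z)$. Your remark that Lemma~\ref{yty-lemma} is not needed here is also accurate.
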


\begin{proof}
Since $\sfpf(y)=\sfpf(z)=\sfpf(\Theta)$, we have $w^{-1}\Theta w=y$. If $wt_{ij}\in\cAfpf(z)$, then we have $z=(wt_{ij})^{-1}\Theta wt_{ij}=t_{ij}w^{-1}\Theta wt_{ij}=t_{ij}yt_{ij}$ and $\ellfpf(z)=\ell(wt_{ij})=\ell(w)+1=\ellfpf(y)+1$. Hence, $y\lessdot_F z=t_{ij}yt_{ij}$.
Conversely, if $y\lessdot_F z=t_{ij}yt_{ij}$, then $\ell(wt_{ij})=\ell(w)+1=\ellfpf(y)+1=\ellfpf(z)$ and $(wt_{ij})^{-1}\Theta wt_{ij}=t_{ij}yt_{ij}=z$, so $wt_{ij}\in\cAfpf(z)$.
\end{proof}

\begin{proposition}\label{toggle-prop}
Let $y\in\cF_n$ and $w\in\cAfpf(y)$. Fix $i<j\not\in\{i,y(i)\}+n\ZZ$ such that $w\lessdot wt_{ij}$ and $\ellfpf(t_{ij}yt_{ij})\neq\ellfpf(y)+1$. Then there exists a unique transposition $t_{kl}\neq t_{ij}$ 
(with $k<l\not \equiv k \modu n$)) such that $wt_{ij}t_{kl}\in\cAfpf(y)$ and $\ellfpf(t_{kl}yt_{kl})\neq\ellfpf(y)+1$. Actually, $t_{kl}=t_{y(j),y(i)}$ and $y(j)<y(i)$.
\end{proposition}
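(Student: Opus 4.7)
The plan is to identify the unique candidate for $t_{kl}$ via the centralizer condition, then use the length hypothesis to force the ordering $y(j) < y(i)$, and finally verify that the resulting candidate lies in $\cAfpf(y)$ by a Bruhat descent analysis.

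For the centralizer step, any $t_{kl}$ with $wt_{ij}t_{kl} \in \cAfpf(y)$ must satisfy $(wt_{ij}t_{kl})^{-1}\Theta(wt_{ij}t_{kl}) = y = w^{-1}\Theta w$, which simplifies to $t_{kl}yt_{kl} = t_{ij}yt_{ij}$. Under the hypothesis $j \not\equiv y(i) \modu n)$, the transposition $t_{ij}$ does not commute with $y$ (otherwise $\{y(i),y(j)\} \equiv \{i,j\} \modu n)$ would force $y(i)\equiv j$), so $y \neq t_{ij}yt_{ij}$ and case (b) of Lemma~\ref{yty-lemma} applies, giving $t_{kl} \in \{t_{ij}, t_{y(i),y(j)}\}$. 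Hence $t_{kl} = t_{y(i),y(j)}$ is the unique non-trivial option. The clause $\ellfpf(t_{kl}yt_{kl}) \neq \ellfpf(y)+1$ is automatic since $t_{kl}yt_{kl} = t_{ij}yt_{ij}$.

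To extract $y(j) < y(i)$, I would note that $wt_{ij}$ conjugates $\Theta$ to $t_{ij}yt_{ij}$, so
\[\ellfpf(t_{ij}yt_{ij}) \le \ell(wt_{ij}) = \ellfpf(y) + 1.\]
The hypothesis excludes equality, so $\ellfpf(t_{ij}yt_{ij}) \le \ellfpf(y)$. Applying Proposition~\ref{length-t-conj-prop} with $y(i) \not\equiv j \modu n)$, this rules out the case $y(i) < y(j)$, which would instead force $\ell(t_{ij}yt_{ij}) \geq \ell(y)+2$ and hence $\ellfpf(t_{ij}yt_{ij}) \geq \ellfpf(y)+1$. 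Therefore $y(j) < y(i)$.

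For existence, I would use $wy = \Theta w$ (the rearrangement of $w^{-1}\Theta w = y$), together with $y(i), y(j) \notin \{i,j\} + n\ZZ$ (from FPF-ness and the hypothesis), to compute $wt_{ij}(y(i)) = \Theta(w(i))$ and $wt_{ij}(y(j)) = \Theta(w(j))$. Since $w \lessdot wt_{ij}$ gives $w(i) < w(j)$, and since $w(i), w(j)$ must lie in distinct $\Theta$-pairs (otherwise $w(j) = \Theta(w(i))$ modulo the $n$-periodicity, forcing $y(i) \equiv j \modu n)$), the bijection $\Theta$ preserves their order and yields $\Theta(w(i)) < \Theta(w(j))$. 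Combined with $y(j) < y(i)$, this shows $wt_{ij}(y(j)) > wt_{ij}(y(i))$, satisfying the first clause of Lemma~\ref{t-length-lem} for $t_{y(j),y(i)}$ to reduce the length of $wt_{ij}$.

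The main obstacle will be verifying the residual delta condition $\delta(wt_{ij}t_{y(j),y(i)}, y(j), y(i)) = 0$ required by Lemma~\ref{t-length-lem} to conclude $\ell(wt_{ij}t_{y(j),y(i)}) = \ell(w)$. I plan to handle this by contradiction: an obstructing integer $m$ with $y(j) < m < y(i)$, $m \not\equiv y(j) \modu n)$, and $wt_{ij}(m)$ lying strictly between $\Theta(w(i))$ and $\Theta(w(j))$ can be transported through the identities $w\circ y = \Theta \circ w$ and $y\circ y = \id$ to produce either a witness obstructing $\delta(w,i,j) = 0$ (contradicting $w \lessdot wt_{ij}$) or a contribution to $\delta(t_{ij}yt_{ij}, i, j) + \delta(yt_{ij}, i, j)$ which by Proposition~\ref{length-t-conj-prop} pushes $\ellfpf(t_{ij}yt_{ij})$ strictly below the bound $\ellfpf(y)$ established above in a way that re-creates the forbidden value $\ellfpf(y) + 1$ of the hypothesis, yielding a contradiction.
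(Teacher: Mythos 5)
Your overall route matches the paper's: Lemma~\ref{yty-lemma} pins down the unique candidate $t_{kl}=t_{y(j),y(i)}$, a length comparison via Proposition~\ref{length-t-conj-prop} extracts $y(j)<y(i)$, and one verifies existence directly. The centralizer step and the ordering step are both correct (the ordering argument you give, via the bound $\ellfpf(t_{ij}yt_{ij})\le\ell(wt_{ij})=\ellfpf(y)+1$ with equality excluded, is clean and valid). But the existence step stalls on a difficulty that is not actually there.

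You have already shown that $wt_{ij}(y(i))=\Theta(w(i))<\Theta(w(j))=wt_{ij}(y(j))$ while $y(j)<y(i)$, so $(y(j),y(i))\in\Inv(wt_{ij})$. Lemma~\ref{t-length-lem} then gives $\ell(wt_{ij}t_{y(j),y(i)})=\ell(wt_{ij})-2\delta-1\le\ell(wt_{ij})-1=\ell(w)=\ellfpf(y)$, \emph{regardless} of the value of $\delta$. On the other hand, a one-line computation (using $t_{y(j),y(i)}yt_{y(j),y(i)}=t_{ij}yt_{ij}$ from Lemma~\ref{yty-lemma}) shows that $wt_{ij}t_{y(j),y(i)}$ conjugates $\Theta$ to $y$, so by Theorem~\ref{length-alphamin-thm} its length is at least $\ellfpf(y)$. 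The two bounds force equality, hence $wt_{ij}t_{y(j),y(i)}\in\cAfpf(y)$, and $\delta=0$ falls out as a byproduct rather than as a prerequisite. So the "main obstacle" you identify dissolves once you invoke minimality, and there is no need for the transport-and-contradiction argument you sketch. That sketch as written is also too vague to stand on its own: its final move, pushing $\ellfpf(t_{ij}yt_{ij})$ strictly below $\ellfpf(y)$ "in a way that re-creates" the forbidden value $\ellfpf(y)+1$, does not obviously produce a contradiction, since dropping below $\ellfpf(y)$ is perfectly consistent with the hypotheses. Replace that final paragraph with the minimality argument and your proof is complete.
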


\begin{proof}
It is easy to check that $t_{kl}=t_{y(i),y(j)}$ has the required properties, so it suffices to prove uniqueness. But since $wt_{ij}t_{kl}\in\cAfpf(y)$, we have $t_{ij}yt_{ij}=t_{kl}yt_{kl}\neq y$, so by Lemma~\ref{yty-lemma}, we have $t_{kl}=t_{y(j),y(i)}$. Since $w\lessdot wt_{ij}$ and $\ellfpf(t_{ij}yt_{ij})\neq\ellfpf(y)+1$, we have $w(y(i))<w(y(j))$ so $y(j)<y(i)$.
\end{proof}

Recall that $R(\tS_n) = \{ ws_iw^{-1} : (w,i) \in \tS_n \times [n]\}$ is the set of all transpositions $t_{ij} \in \tS_n$.

\begin{theorem}\label{atom-bijection}
Suppose $y, z \in \cF_n$ and $y\lessdot_F z$. The map $(\pi,t) \mapsto \pi t$ is a bijection from the set of pairs $(\pi ,t) \in \cAfpf(y) \times R(\tS_n)$ with $\pi \lessdot \pi t $ and $z=tyt$ to the set of FPF-atoms  $\cAfpf(z)$.
\end{theorem}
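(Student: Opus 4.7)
The map sends each pair $(\pi,t)$ on the left to $\sigma:=\pi t$. The plan is to verify three things: well-definedness of the image in $\cAfpf(z)$, surjectivity, and injectivity of the fibres.

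Well-definedness is immediate from Proposition~\ref{covering-prop}: writing $t=t_{ij}$, the hypotheses $\pi\in\cAfpf(y)$, $\pi\lessdot\pi t$, $tyt=z$, together with the assumed $y\lessdot_F z$, are precisely the assumptions of that proposition, so $\pi t\in\cAfpf(z)$. For surjectivity, fix $\sigma\in\cAfpf(z)$. Since $y$ and $z$ lie in the same $\tS_n$-orbit, $\sfpf(y)=\sfpf(z)$ by Lemma~\ref{par-inv-lem}, so Corollary~\ref{atom-cover-cor} produces some $\pi\in\cAfpf(y)$ with $\pi\lessdot\sigma$ in the Bruhat order of $\tS_n$. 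Setting $t:=\pi^{-1}\sigma$, we obtain $\pi\lessdot\pi t=\sigma$ and
\[
tyt \;=\; t\,\pi^{-1}\Theta\pi\,t \;=\; (\pi t)^{-1}\Theta(\pi t) \;=\; \sigma^{-1}\Theta\sigma \;=\; z,
\]
so $(\pi,t)$ lies in the domain and maps to $\sigma$.

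Injectivity is the main obstacle. Suppose $(\pi_1,t_1)\ne(\pi_2,t_2)$ both map to $\sigma$. Since $\pi_i=\sigma t_i$, it suffices to rule out $t_1\ne t_2$. Both satisfy $t_iyt_i=z\ne y$, so Lemma~\ref{yty-lemma} forces its Case~(b), namely $t_2=yt_1y$. Substituting,
\[
\pi_2 \;=\; \sigma t_2 \;=\; \pi_1 t_1\cdot yt_1y \;=\; \pi_1(t_1yt_1)y \;=\; \pi_1 zy,
\]
and equating the two expressions for $\Theta\pi_2$ via the atom identities $\Theta\pi_i=\pi_iy$ gives $yzy=z$, i.e., $y$ and $z$ commute. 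In this commuting case $yz=t_1t_2$ is a product of two disjoint transpositions, so $\pi_2=\pi_1 t_1t_2$, and $\pi_2\in\cAfpf(y)$ requires $\ell(\sigma t_2)=\ell(\sigma)-1$; equivalently, $t_2=yt_1y$ must also be a Bruhat-covering right-descent reflection of $\sigma$.

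The hard part is to exclude this last possibility. The plan is to combine Proposition~\ref{length-t-conj-prop} with the atom window description in Theorem~\ref{atom-thm}: the hypothesis $y\lessdot_F z$ pins down $\ell(t_1yt_1)-\ell(y)=2$, which via Proposition~\ref{length-t-conj-prop} constrains the $\delta$-counts at the indices of $t_1$; the disjointness $\{i,j\}\cap\{y(i),y(j)\}=\varnothing$ (forced by $t_1\ne t_2$ and the FPF condition on $y$) then shows that the analogous $\delta$-counts at the indices of $t_2=yt_1y$ cannot simultaneously yield $\ell(\sigma t_2)=\ell(\sigma)-1$. An equivalent route is to read off the window $\sigma=[e_1,f_1,\dots,e_l,f_l]^{-1}$ given by Theorem~\ref{atom-thm}: right-multiplication by $t_1t_2$ swaps two cycle-labels in pairs of distant positions, which must strictly increase the number of inversions, so $\pi_2=\sigma t_2\ne\sigma t_1^{-1}\cdot(\text{atom of }y)$ and hence $\ell(\pi_2)>\ellfpf(y)$. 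Either route gives the contradiction and completes injectivity.
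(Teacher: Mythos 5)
Your proof has the right skeleton: well-definedness and surjectivity match the paper's argument (via Proposition~\ref{covering-prop} and Corollary~\ref{atom-cover-cor}), and you correctly reduce injectivity to ruling out the possibility $t_2 = yt_1y$ using Lemma~\ref{yty-lemma}. However, the injectivity argument is incomplete and is the crux of the theorem, so this is a genuine gap.

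You derive $yzy=z$ (which is in fact automatic whenever $t_2=yt_1y$ and $t_2yt_2=t_1yt_1=z$, so the atom identities add nothing here) and then say the ``hard part'' is to exclude $t_2$ from being a covering right-descent of $\sigma$. But you only sketch a plan in two vague routes, neither of which is carried out. The first route (``the $\delta$-counts at the indices of $t_2$ cannot simultaneously yield $\ell(\sigma t_2)=\ell(\sigma)-1$'') is not obviously correct as stated --- since $\sigma\ne y$, the $\delta$-count constraints on $y$ at $\{i,j\}$ do not directly transfer to constraints on $\sigma$ at $\{y(i),y(j)\}$. The second route (``right-multiplication by $t_1t_2$ swaps two cycle-labels in pairs of distant positions, which must strictly increase the number of inversions'') is an overclaim: multiplying by two disjoint transpositions need not increase length, and whether it does depends on the specific configuration. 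What the paper does instead is cleaner: from the covering $y\lessdot_F z$ it applies Corollary~\ref{length-t-cor} to get $\ell(yt_{ij})=\ell(y)+1$, hence $y(i)<y(j)$; then, since $\pi_2=\pi_1 t_{ij}t_{y(i),y(j)}$ would have to lie in $\cAfpf(y)$, Theorem~\ref{atom-thm} forces the relative ordering of $i,j,y(i),y(j)$ to be one of four configurations, all of which have $y(j)<y(i)$ --- contradicting $y(i)<y(j)$. You should make the argument concrete along these lines; as written your ``either route gives the contradiction'' does not follow from what you have shown.
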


\begin{proof}
The surjectivity follows from Corollary~\ref{atom-cover-cor} and Proposition~\ref{covering-prop}. 
To prove that the map is injective, suppose $t_{ij}, t_{kl}\in R(\tS_n)$ with $i<j$, $k<l$, and $\pi_1,\pi_2\in\cAfpf(y)$ are such that $\pi_1t_{ij}=\pi_2t_{kl}\in\cAfpf(z)$ and $z=t_{ij}yt_{ij}=t_{kl}yt_{kl}$. Then by Lemma~\ref{yty-lemma}, we have $t_{kl}\in\{t_{ij},yt_{ij}y\}$.

If $t_{kl}=yt_{ij}y$, then by the discussion in the proof of Lemma~\ref{yty-lemma}, we see that $y(i)\not\equiv j\modu{n})$ and $t_{kl}=t_{y(j),y(i)}$. 
But by Proposition~\ref{covering-prop}, we have $y\lessdot_F z$ and $\ell(t_{ij}yt_{ij})=\ell(t_{kl}yt_{kl})=\ell(y)+2$. By Corollary~\ref{length-t-cor}, we have $\ell(yt_{ij})=\ell(t_{ij}y)=\ell(y)+1=\ell(yt_{kl})=\ell(t_{kl}y)$. Therefore $y(i)<y(j)$.
But since $\pi_2=\pi_1t_{ij}t_{y(i),y(j)}\in\cAfpf(y)$, Theorem~\ref{atom-thm} implies that we must have $i<j<y(j)<y(i)$,
$i<y(j)<j<y(i)$,
$y(i)<j<y(j)<i$, or $y(i)<y(j)<j<i$, which are all contradictions.
\end{proof}
For $y \in \cF_n$ and $ r \in \ZZ$, we define
\[\ba
 \Pi^-(y,r) &= \{ z \in \cF_n : y \lessdot_F z=t_{ir}y t_{ir} \text{ for some }i < r\text{ with }i\notin \{r,y(r)\} + n\ZZ\},
\\
\Pi^+(y,r) &= \{ z \in \cF_n : y \lessdot_F z=t_{rj} y t_{rj}\text{ for some }j > r\text{ with }j\notin \{r,y(r)\} + n\ZZ\}.
\ea\]

\begin{lemma}\label{phi-subset}
If $y\in\cF_n$ and $p<q=y(p)$ then $\Pi^+(y,p)\subset\Pi^+(y,q)$ and $\Pi^-(y,q)\subset\Pi^-(y,p)$.
\end{lemma}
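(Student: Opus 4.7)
The plan is to prove both inclusions by leveraging Lemma~\ref{yty-lemma}(b), which says that for a transposition $t$ with $y \neq tyt$ and a mild non-congruence condition, the only transpositions $r$ with $ryr = tyt$ are $r = t$ and $r = yty$. I will write out the first inclusion $\Pi^+(y,p) \subset \Pi^+(y,q)$ in detail; the second is symmetric.

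First I take an arbitrary $z \in \Pi^+(y,p)$, so that $z = t_{pj}\,y\,t_{pj}$ for some $j > p$ with $j \notin \{p,q\}+n\ZZ$ and $y \lessdot_F z$. Setting $t = t_{pj}$, the hypothesis $j \notin q + n\ZZ$ says exactly $y(p) = q \not\equiv j \pmod{n}$, so Lemma~\ref{yty-lemma}(b) applies and supplies the alternative conjugating transposition $r = yty = t_{y(p),y(j)} = t_{q,y(j)}$. This gives the representation $z = t_{q,y(j)}\,y\,t_{q,y(j)}$.

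Next I verify that the pair $(q, y(j))$ certifies $z \in \Pi^+(y,q)$. The cover relation $y \lessdot_F z$ is equivalent to $\ell(z) = \ell(y) + 2$, and Corollary~\ref{length-t-cor} then forces $\ell(yt_{pj}) = \ell(y) + 1$; combined with $p < j$ and Lemma~\ref{t-length-lem}, this implies $y(p) < y(j)$, i.e.\ $q < y(j)$. The exclusion $y(j) \notin \{p,q\} + n\ZZ$ is immediate from $y^{-1} = y$: if $y(j) \equiv p \pmod{n}$ then $j \equiv y(p) = q \pmod{n}$, contradicting the hypothesis on $j$, and $y(j) \equiv q \pmod{n}$ would force $j \equiv p \pmod{n}$, also excluded. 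Hence $z \in \Pi^+(y, q)$.

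The second inclusion is handled symmetrically: for $z = t_{iq}\,y\,t_{iq} \in \Pi^-(y,q)$ with $i < q$ and $i \notin \{p,q\}+n\ZZ$, Lemma~\ref{yty-lemma}(b) rewrites $z = t_{y(i),p}\,y\,t_{y(i),p}$, and $\ell(yt_{iq}) = \ell(y) + 1$ gives $y(i) < y(q) = p$, with the modular exclusion argued as above. There is no real obstacle here; the one thing to be careful about is checking that the non-congruence hypothesis of Lemma~\ref{yty-lemma}(b) is available, which is guaranteed precisely by the definition of $\Pi^\pm(y,r)$ excluding indices congruent to $y(r)$ modulo $n$. Everything else is bookkeeping with the length formula.
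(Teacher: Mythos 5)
Your proof is correct and follows essentially the same route as the paper: use the length corollary (Corollary~\ref{length-t-cor}) to conclude $q = y(p) < y(j)$, recognize the alternative representation $z = t_{q,y(j)}\,y\,t_{q,y(j)}$, and check that this pair certifies membership in $\Pi^+(y,q)$. You are somewhat more explicit than the paper in two respects, both welcome: you invoke Lemma~\ref{yty-lemma}(b) by name to justify the rewriting of $z$ (the paper leaves this implicit), and you verify the congruence exclusion $y(j)\notin\{p,q\}+n\ZZ$, which the paper does not mention even though the definition of $\Pi^+(y,q)$ requires it.
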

\begin{proof}
If $y\lessdot_F z=t_{p,i}yt_{p,i}$ where $p<i$ then by Proposition~\ref{length-t-conj-prop} and Corollary~\ref{length-t-cor}, we have $q=y(p)<y(i)$ and $z=t_{q,y(i)}yt_{q,y(i)}\in\Pi^+(y,q)$. Hence $\Pi^+(y,p)\subset\Pi^+(y,q)$. The other inclusion follows similarly.
\end{proof}


\begin{theorem} If $y \in \cF_n$ and $p,q \in \ZZ$ are such that $p< q = y(p)$ then 
\be\label{id11-eq} \sum_{z \in \Pi^-(y,p)} \Ffpf_z = \sum_{z \in \Pi^+(y,q)} \Ffpf_z.\ee
\end{theorem}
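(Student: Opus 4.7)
Plan:

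My plan is to combine the Lam--Shimozono transition formula (Theorem~\ref{trans-thm}) applied at both $r=p$ and $r=q$ with the toggle involution of Proposition~\ref{toggle-prop}. By Theorem~\ref{atom-bijection} and Lemma~\ref{yty-lemma}(b), each side of \eqref{id11-eq} expands as a sum $\sum_{(\pi,t)}\tF_{\pi t}$ over pairs with $\pi\in\cAfpf(y)$ and $\pi\lessdot\pi t$: the right-hand side picks up $t \in \{t_{qj}, t_{p,y(j)}\}$ for each $z = t_{qj}yt_{qj} \in \Pi^+(y,q)$, and the left-hand side picks up $t \in \{t_{ip}, t_{y(i),q}\}$ for each $z = t_{ip}yt_{ip} \in \Pi^-(y,p)$. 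Because the definitions of $\Pi^\pm$ require $j \notin \{p,q\}+n\ZZ$, all such transpositions are ``non-type-C'', meaning they touch only one of the residue classes $\bar p, \bar q$ modulo $n$.

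Next, I would apply Theorem~\ref{trans-thm} to each atom $\pi \in \cAfpf(y)$ at $r=p$ and $r=q$, summing over $\pi$ to obtain
\[
\sum_\pi \sum_{\sigma \in \Phi^+_q(\pi) \cup \Phi^+_p(\pi)} \tF_\sigma = \sum_\pi \sum_{\sigma \in \Phi^-_q(\pi) \cup \Phi^-_p(\pi)} \tF_\sigma.
\]
I would then split the transpositions appearing here into type-C (those touching both $\bar p$ and $\bar q$, forming a single family $\{t^{(o)}\}_{o \in \ZZ}$ where $t^{(o)}$ swaps $p$ with $q+on$) and non-type-C. For each offset $o$, one checks that $t^{(o)}$ lies in exactly one of $\Phi^+_q, \Phi^+_p$ and in exactly one of $\Phi^-_q, \Phi^-_p$, in each case with the same condition $\pi\lessdot\pi t^{(o)}$; so type-C contributions match term-by-term and cancel. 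Among the remaining non-type-C pairs, the ``non-cover'' ones (those with $y \not\lessdot_F tyt$) satisfy the hypotheses of Proposition~\ref{toggle-prop}, and the toggle $(\pi,t_{ij}) \mapsto (\pi t_{ij} t_{y(j),y(i)}, t_{y(j),y(i)})$ bijects non-cover pairs in $\Phi^+_q$ with those in $\Phi^-_p$ (using the Proposition's conclusion $y(j) < y(q) = p$) and likewise bijects $\Phi^+_p$ with $\Phi^-_q$, all while preserving $\tF_{\pi t}$. So the non-cover contributions cancel as well.

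What is left on each side of the displayed equation is exactly the sum over non-type-C cover pairs, which by the decomposition of the first paragraph equals $\sum_{z \in \Pi^+(y,q)}\Ffpf_z$ on the $\Phi^+$ side and $\sum_{z \in \Pi^-(y,p)}\Ffpf_z$ on the $\Phi^-$ side, yielding \eqref{id11-eq}. The most delicate step will be verifying the precise allocation of the type-C transpositions $t^{(o)}$ among the four sets $\Phi^\pm_p, \Phi^\pm_q$, which requires tracking offsets using the length formulas of Proposition~\ref{length-t-conj-prop}; the edge case $p \equiv q \pmod{n}$ is immediate since no type-C transpositions exist then.
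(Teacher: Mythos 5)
Your proposal is correct and follows essentially the same strategy as the paper's proof: apply the Lam--Shimozono formula at $r=p$ and $r=q$ summed over FPF-atoms, cancel the cross-residue (``type-C'') terms, then cancel the non-cover contributions via the toggle involution of Proposition~\ref{toggle-prop}, leaving exactly the two sides of \eqref{id11-eq} via Theorem~\ref{atom-bijection} and Lemma~\ref{yty-lemma}(b). One small remark: the case $p\equiv q\pmod n$ never actually occurs since $q=y(p)$ and $y\in\cF_n$ forces $y(p)\not\equiv p\pmod n$, and the type-C allocation you flag as delicate is in fact routine (it depends only on the sign of $q+on-p$, with the same Bruhat cover condition on both sides).
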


This formula is the fixed-point-free analogue of \cite[Theorem 4.15]{MZ}. 
The structure of our proof is similar but more self-contained,
as we do not depend on computer arguments, which were essential in \cite{MZ}.

\begin{proof}

Lam and Shimozono's transition formula, Theorem~\ref{trans-thm}, implies that
\[\label{id-1-eq}
\sum_{\pi \in \cAfpf(y)}\(\sum_{\substack{i < p\not\equiv i\modu{n})   \\ \pi \lessdot \pi t_{ip}}} \tF_{\pi t_{ip}}
+
\sum_{\substack{i < q\not\equiv i\modu{n})  \\ \pi \lessdot \pi t_{iq}}} \tF_{\pi t_{iq}}\)
=
\sum_{\pi \in \cAfpf(y)}
\(
\sum_{\substack{i>p\not\equiv i\modu{n})  \\ \pi \lessdot \pi t_{pi}}} \tF_{\pi t_{pi}}
+
\sum_{\substack{i>q\not\equiv i\modu{n})   \\ \pi \lessdot \pi t_{qi}}} \tF_{\pi t_{qi}}
\)
\]
Certain terms $\tF_{\pi t_{ip}}$ and $\tF_{\pi t_{iq}}$ on the left  reappear on the right as $\tF_{\pi t_{pi}}$ and $\tF_{\pi t_{qi}}$ when $i\in \{p,q\} + n\ZZ$. Canceling these terms transforms
the previous equation  to
\be\label{id1-eq}
\sum_{\pi \in \cAfpf(y)}\(\sum_{\substack{i < p   \\ \pi \lessdot \pi t_{ip}}} \tF_{\pi t_{ip}}
+
\sum_{\substack{i < q  \\ \pi \lessdot \pi t_{iq}}} \tF_{\pi t_{iq}}\)
=
\sum_{\pi \in \cAfpf(y)}
\(
\sum_{\substack{p<i  \\ \pi \lessdot \pi t_{pi}}} \tF_{\pi t_{pi}}
+
\sum_{\substack{q < i   \\ \pi \lessdot \pi t_{qi}}} \tF_{\pi t_{qi}}
\)
\ee
where the inner sums range over integers $i \notin \{p,q\} + n\ZZ$.

We discuss the left side of \eqref{id11-eq}. 
Fix $z \in \cF_n$ with $\sfpf(y) = \sfpf(z)$.
If $z \in \Pi^-(y,p)$, then it follows by Lemmas~\ref{yty-lemma} and Theorem~\ref{atom-bijection} that there exists a unique integer $i \notin \{p,q\} + n\ZZ$ with either $i<p$ and $\pi\lessdot \pi t_{ip}\in \cAfpf(z)$
or $y(i)<q$ and $\pi \lessdot \pi t_{y(i)q} \in \cAfpf(z)$. 
Conversely, suppose $\pi \in \cAfpf(y)$ and $i,j \notin \{p,q\}+n\ZZ$ are such that $i<p$ and $j<q$ and $\pi \lessdot \pi t_{ip}$ and $\pi \lessdot \pi t_{jq}$. Proposition~\ref{covering-prop} shows that
$\pi t_{ip}$ (respectively, $\pi t_{jq}$) is an FPF-atom for  $z$ if and only if
 $y\lessdot_F z =t_{ip}yt_{ip}$ (respectively, $y\lessdot_F z= t_{jq}yt_{jq}$), in which case $z \in \Pi^-(y,p)$ by \cite[Theorem 5.15]{RV} and Lemma~\ref{phi-subset}.
Thus the left side of \eqref{id11-eq} can be written as 
\[\sum_{\pi \in \cAfpf(y)}\(\sum_{\substack{i < p   \\ \pi \lessdot \pi t_{ip}}} \tF_{\pi t_{ip}}
+
\sum_{\substack{i < q  \\ \pi \lessdot \pi t_{iq}}} \tF_{\pi t_{iq}}\)
-\sum_{(\pi,i,j) \in \cN^-} \tF_{\pi t_{ij}}
\]
where $\cN^-$ is the set of triples $(\pi,i,j) \in \cAfpf(y) \times \ZZ \times \ZZ$ 
with $i < j \in \{p,q\}$ and $i \notin \{p,q\} + n \ZZ$ and
$\pi\lessdot \pi t_{ij}$ and $\ellfpf(t_{ij}yt_{ij})\neq\ellfpf(y)+1$.

Similarly, the right side of \eqref{id11-eq} can be written as 
\[
\sum_{\pi \in \cAfpf(y)}
\(
\sum_{\substack{p<i  \\ \pi \lessdot \pi t_{pi}}} \tF_{\pi t_{pi}}
+
\sum_{\substack{q < i   \\ \pi \lessdot \pi t_{qi}}} \tF_{\pi t_{qi}}\)
-\sum_{(\pi,i,j) \in \cN^+} \tF_{\pi t_{ij}}
\]
where $\cN^+$ is the set of triples $(\pi,i,j) \in \cAfpf(y) \times \ZZ \times \ZZ$ 
with $j>i \in \{p,q\}$ and $j \notin \{p,q\} + n \ZZ$ and
$\pi\lessdot \pi t_{ij}$ and $\ellfpf(t_{ij}yt_{ij})\neq\ellfpf(y)+1$.

It therefore suffices to show that $\sum_{(\pi,i,j) \in \cN^-} \tF_{\pi t_{ij}}=\sum_{(\pi,i,j) \in \cN^+} \tF_{\pi t_{ij}}$.
Let 
$\cN \supset \cN^\pm$ be the set of triples $(\pi,i,j) \in \cAfpf(y)\times \ZZ \times \ZZ$
with $i< j \not\equiv i \modu n)$ and $\pi \lessdot \pi t_{ij}$ and $\ellfpf(t_{ij}yt_{ij})\neq\ellfpf(y)+1$.
Given $(\pi,i,j) \in \cN$, let $t_{kl}=t_{y(i),y(j)}$; then by Proposition~\ref{toggle-prop}, we have 
$ \pi \neq \pi t_{ij} t_{kl} \in \cAfpf(y)$ and $\ellfpf(t_{kl}yt_{kl})\neq\ellfpf(y)+1$,
so we define
  $\theta(\pi,i,j) = (\pi t_{ij} t_{kl},k,l).$
This gives a map $\theta : \cN \to \cN$.
It is straightforward to check that $\theta$ is actually an involution,
and it follows from Proposition~\ref{toggle-prop}
that $\theta$ restricts to a bijection $\cN^- \to \cN^+$.
Since  $\theta(\pi_1,i,j) = (\pi_2,k,l)$ implies that $\pi_1 t_{ij} = \pi_2 t_{kl}$,
we get $\sum_{(\pi,i,j) \in \cN^-} \tF_{\pi t_{ij}} = \sum_{(\pi,i,j) \in \cN^+} \tF_{\pi t_{ij}}$
as needed.
\end{proof}

\begin{example}
When $n=2$, 
the theorem is equivalent to the identity $ |\Pi^-(y,p)| = | \Pi^+(y,q)|$.
\end{example}
\begin{example}
Again let $n=4$,
 $y=t_{1,6}t_{3,8}=[6,-3,8,-1]\in\cF_4$, and choose $(p,q)=(1,6)$. Then  
\[
\ba
\Pi^-(y,p)&=\{[-5,-4,9,10],[4,-5,10,1]\}=\{t_{3,9}t_{4,10},t_{1,4}t_{3,10}\},
\\
\Pi^+(y,q)&=\{[7,8,-3,-2],[8,-1,6,-3]\}=\{t_{1,7}t_{2,8},t_{1,8}t_{3,6}\},
\ea
\]
and $\Ffpf_{[-5,-4,9,10]}+\Ffpf_{[4,-5,10,1]}=\Ffpf_{[7,8,-3,-2]}+\Ffpf_{[8,-1,6,-3]}=12m_{1^5}+6m_{21^3}+3m_{2^21}+2m_{31^2}+m_{32}.$
\end{example}

In \cite{Paw2016}, Pawlowski uses Lam and Shimozono's transition formula 
to derive an effective
recursion for computing the Schur expansion of cohomology classes of certain positroid varieties.
We wonder if the affine involution transition formulas proved here and in \cite{MZ}
could be used in similar applications.


\end{document}